\numberwithin{equation}{section}
\newtheorem{statement}{Statement}
\newtheorem{theorem}{Theorem}
\newtheorem{propo}{Proposition}
\newtheorem{coro}{Corollary}
\newtheorem{lemma}{Lemma}
\newtheorem{definition}{Definition}
\newtheorem{remark}{Remark}
\renewenvironment{proof} {{\em Proof: \hskip 0.2cm}}{\hfill $\Box$ \\ }
\newcommand{\vufort}{{\stackrel{*}{\ \longrightarrow \ }}}
\newcommand{\simil}{\sim_{L_G}}
\newcommand{\eps}{\varepsilon}
\newcommand{\Z}{\mathbb Z}
\newcommand{\Del}[1]{\Delta^{(#1)}}
\newcommand{\Sn}[1] {{\mathcal S}_{#1}}
\newcommand{\etoile}{\stackrel{*}{\rightarrow}}
\begin{document}

\title{The Riemann-Roch  theorem for graphs
and the rank in complete graphs}

\author[Robert Cori and Yvan Le Borgne]{Robert Cori, Labri Universit\'e Bordeaux 1, \newline  Yvan Le Borgne, Labri Universit\'e Bordeaux 1 and PIMS, Simon Fraser University, Burnaby BC.}\footnote{The first   author  acknowledges the support of ERC under the agreement 
"ERC StG 208471 - ExploreMap".}\

\address{Labri, Universit\'e Bordeaux 1, 33405 Talence Cedex, France.
\hfill\break
\tt http://www.labri.fr/perso/cori/  http://www.labri.fr/perso/borgne/}
\email{robert.cori@labri.fr, borgne@labri.fr}
\date{\today}

\begin{abstract}
  The paper by M. Baker and S. Norine in 2007 introduced a new
  parameter on configurations of graphs and gave a new result in the
  theory of graphs which has an algebraic geometry flavor.  This
  result was called Riemann-Roch formula for graphs since it defines a
  combinatorial version of divisors and their ranks in terms of
  configuration on graphs.  The so called chip firing game on graphs
  and the sandpile model in physics play a central role in this
  theory.

  In this paper we give a presentation of the theorem of Baker and
  Norine in purely combinatorial terms, which is more accessible and
  shorter than the original one.

  An algorithm for the determination of the rank of configurations is
  also given for the complete graph $K_n$.  This algorithm has linear
  arithmetic complexity.  The analysis of number of iterations in a
  less optimized version of this algorithm leads to an apparently new
  parameter which we call the prerank.  This parameter and the
  parameter $\mathrm{dinv}$ provide an alternative description to some
  well known $q,t$-Catalan numbers.  Restricted to a natural subset of
  configurations, the two natural statistics degree and rank lead to a
  distribution which is described by a generating function which, up
  to a change of variables and a rescaling, is a symmetric fraction
  involving two copies of Carlitz q-analogue of the Catalan numbers.

\end{abstract}

\maketitle

We consider the following solitary  game on an undirected (non oriented) connected graph $G=(X,E)$ without loops:
at the beginning  integer  values $f_i$ are attributed to the $n$ vertices 
$x_1, x_2, \ldots x_n$ of the graph, these  values can be positive
or negative and define a configuration $f$. At each step a toppling can be performed by the player on a vertex $x_i$, it consists
in subtracting $d_i$ (the number of edges incident to $x_i$) to the amount $f_i$ and for each neighbor  $x_j$ of $x_i$  increase $f_j$  by the number of edges between these two vertices.
In this operation the amount of vertex $x_i$ may become, or stay negative. The aim of the 
player is to find  a sequence of toppling operations  which will 
end with a configuration where all the $f_i$ are non negative.   Since the sum  of the $f_i$ is invariant by  toppling, a necessary
condition to succeed is that in the initial configuration this sum should be non negative. We will see that 
this condition is not sufficient.

This game has much to do with the chip firing game (see \cite{BjoLovShor}, \cite{biggs1})
and the sandpile model  (see \cite{bakTang2},  \cite{dharIntro},  \cite{dharMajumdar}), for which  recurrent configurations 
where  defined and proved to be canonical representatives of the classes 
of configurations equivalent by a sequence of  topplings (for a more algebraic treatment see also \cite{perkPerlWilm}).

The game  was introduced and studied in detail by Baker and Norine in \cite{bakerNorine}
who also introduced a new parameter on graph configurations : the rank.  One characteristic of the 
rank $\rho(f)$ of a configuration  $f$
is that it is non negative if and only if one can get from $f$ a configuration non-negative on every vertices by performing
a sequence of topplings.
For this parameter they obtain a simple formula expressing a symmetry
similar to  the Riemann-Roch formula for surfaces and curves (a classical reference to this formula is
the book by H. Farkas and I. Kra \cite{farkasKra}).

Our aim here is to give a simple  presentation of this Riemann-Roch like theorem for all graphs and 
study the values of this parameter in the particular case where $G$ is a complete 
graph on $n$ vertices. For these complete graphs,
 it was noticed (see Proposition 2.8. in \cite{coriRossin}) that the recurrent configurations
correspond to the parking functions which play a central role in combinatorics.
We obtain a simple algorithm to compute the rank in that case, of linear arithmetic complexity,  while 
there is no known polynomial time  algorithm to compute that rank for arbitrary graphs (see \cite{bakerShokrieh} \cite{manjunath}).
This algorithm suggests the introduction of a new parameter on parking functions
and Dyck paths which we call prerank.  For this parameter we study some properties in relation with other known parameters on Dyck paths.
The degree is the other statistic on configuration involved by Baker and Norine theorem.
The restriction to sorted parking configurations select exactly one representative of each possible run of the algorithm up to the symmetries of the complete graph with a pointed vertex. 
We prove that the generating function of sorted parking configurations according to their degree, their rank and the size of complete graph, is, up to a change of variable and a rescaling, a symmetric fraction in two copies of Carlitz q-analogue of Catalan's numbers.

\section{Configurations on a graph}

Let $G= (X, E)$ be a multi-graph with $n$ vertices, where $X =\{x_1,x_2, \ldots, x_n\}$
is the vertex set and $E$ is a symmetric matrix such that $e_{i,j}$
is the number of edges with endpoints $x_i, x_j$, hence
$e_{i,j} = e_{j,i}$. In all this paper  $n$ denotes
the number of vertices of the graph $G$ and $m$ the number of its edges.
We  suppose that $G$ is connected and 
has no loops, so that $e_{i,i}=0$ for all $i$.

We will consider {\em configurations} on a graph,  these  are  elements of 
the discrete lattice $\Z^n$.  Each configuration $f$ may be considered as 
assigning (positive or negative) tokens to the vertices.
The symbol $\eps^{(i)}$ 
will   denote the configuration in which the value 1
is assigned to vertex $x_i$ and the value 0 is assigned to all other vertices.

The  {\em degree} of the configuration $f$ is the sum
of the $f_i$'s it  is denoted by  $deg(f)$. 

\subsection{The Laplacian configurations}
These configurations correspond to the  rows of the Laplacian 
matrix of a graph, a classical tool in Algebraic Graph Theory.

The  {\em Laplacian configuration} $\Del{i}$ is given by:
$ \Del{i} = d_i \eps^{(i)}- \sum_{i=1}^n e_{i,j}\eps^{(j)}$, 
where   $d_i = \sum_{i=1}^n e_{i,j}$ is the degree of the vertex $x_i$.
This $n$ configurations which degrees are equal to 0  play a central role throughout this paper.

We denote by  
 $L_G$ the subgroup of  $\Z^n$
 generated by the $\Del{i}$, and two configurations $f$ and $g$ will be said 
{\em toppling equivalent} if $f -g\in L_G$, which will also be written as
$ f \simil g$.

In the sandpile model, the transition from 
configuration $f$  to  the configuration $f- \Del{i}$ is allowed only if $f_i \geq d_i$ and is called a toppling, 
it is called a firing in the theory of chip firing games, here we omit this condition and perform topplings even
if $f_i < d_i$.

Notice that $\sum_{i=1}^n \Del{i} = 0$ and that for a connected graph 
this is the unique relation (up to multiplication by a constant) satisfied by the $\Del{i}$, moreover 
the principal minors of the Laplacian matrix are all equal to the number of spanning trees
of the graph.

\subsection{Recurrent configurations}
We use here the notation usually considered in the sandpile model, so that we will call
{\em sandpile configuration} a configuration $f$ such that $f_i \geq 0$ for all
$i < n$. This corresponds to the fact that in the sandpile model the vertex $x_n$ is 
considered as a sink collecting tokens, so that  the number of tokens of the sink  is not considered
in this context. 

\begin{definition}
\label{def:toppling}
In the sandpile model, a toppling on vertex $x_i$, where $i \neq n$, may occur in 
a sandpile configuration only if if $f_i \geq d_i$. A sandpile configuration $f$ is stable if no toppling can occur, that is
$f_i < d_i$ for all $i <n$.
\end{definition}
Notice that when a toppling occurs in the sandpile model, the configuration 
 $f - \Del{i}$ is  also a sandpile configuration.

The toppling  operation for a sandpile configuration will be denoted
by $ f \rightarrow g$.
We also write:
$$ f \etoile g$$ if $f$ and $g$ are sandpile configurations 
and if $g$ is obtained from $f$ by a sequence 
of  toppling operations meeting only sandpile configurations.  Notice that $ f \etoile g$ implies $ f \simil g$.

\medskip

Sequences of topplings may be performed in any order
until a stable configuration is attained as the following proposition 
states, the proof of which may be found in \cite{dhar1} or in \cite{knuthVol4} pages 42, and 70.

\begin{propo}
\label{prop:uniqueStable}
For any sandpile configuration $f$ there exists a unique
stable configuration  $f'$  such that $f \etoile f'$.
\end{propo}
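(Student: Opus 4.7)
The plan is to split the statement into existence (some finite toppling sequence reaches a stable configuration) and uniqueness (any two maximal toppling sequences from $f$ terminate at the same configuration); this is the classical ``abelian sandpile'' argument going back to Dhar.

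For existence, I introduce the \emph{odometer} $u \in \Z_{\ge 0}^{n-1}$ recording, after any prefix of topplings, how many times each non-sink vertex has fired, so that the current configuration equals $f - \sum_{i<n} u_i \Del{i}$. Two invariants control the dynamics. First, $\sum_k f_k$ is preserved because every $\Del{i}$ has degree zero. Second, the toppling precondition $f_i \ge d_i$ forces $f_i \ge 0$ to persist for all $i<n$ throughout the run, and since $x_n$ never fires, $f_n$ is non-decreasing. Combining these with $\sum_k f_k$ constant yields a uniform bound on every coordinate during the entire run. Assume for contradiction that some infinite sequence of topplings exists; then at least one non-sink vertex $x_i$ fires infinitely often. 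A connectivity argument propagates this: if a neighbor $x_j$ with $j<n$ fires only finitely often, then $f_j$ grows without bound from the $x_i$ topplings alone, violating the coordinate bound. Iterating along a path in $G$ from $x_i$ to $x_n$, some vertex $x_k$ adjacent to the sink fires infinitely often, so $f_n \to \infty$, again contradicting boundedness.

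For uniqueness I would invoke Newman's lemma: termination together with local confluence implies global confluence, and hence a unique normal form. Local confluence is the classical diamond: if both $f \to f - \Del{i}$ and $f \to f - \Del{j}$ are permitted (so $f_i \ge d_i$ and $f_j \ge d_j$), then from $f-\Del{i}$ firing $x_j$ is still permitted, since its coordinate has only grown by $e_{i,j} \ge 0$; symmetrically from $f-\Del{j}$ firing $x_i$ is permitted; both detours meet at $f - \Del{i} - \Del{j}$. With the termination already established, Newman's lemma supplies the unique $f'$ claimed.

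The main technical delicacy is the propagation step in termination: one must rule out the possibility that an infinite sequence fires some vertices but not others. The cleanest approach iterates the neighbor argument along a path to the sink, using the coordinate bounds from conservation and non-negativity at each step; a tacit use of connectedness of $G$ is essential here. An alternative route avoiding Newman's lemma is a direct least-action principle, showing that the odometer of any complete toppling sequence equals the componentwise minimum of the $u \in \Z_{\ge 0}^{n-1}$ for which $f - \sum_{i<n} u_i \Del{i}$ is stable, the existence of this minimum following from closure of the stabilizing set under componentwise min; this gives both existence and uniqueness at once, but verifying the closure is slightly more delicate than the diamond, so I prefer the Newman route.
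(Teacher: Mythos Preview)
Your argument is correct. Note, however, that the paper does not actually supply its own proof of this proposition: it simply refers the reader to Dhar~\cite{dhar1} and Knuth~\cite{knuthVol4}. The proof you have written is precisely the classical one found in those references --- termination via conservation of degree plus propagation of firing along a path to the sink, and uniqueness via the diamond property (the ``abelian'' commutation of two legal topplings) together with Newman's lemma. So there is nothing to compare against in the paper itself; your write-up fills in what the authors chose to cite out, and does so along the standard lines.
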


\medskip
A configuration is {\em recurrent} in an evolving system if it could be
observed after a long period of the evolution of the system. In the
case of the  sandpile model, the system is considered to evolve
by adding a token in any cell at random and then applying 
topplings  until a stable configuration is reached. This
translates into the  following notion which is  central~:

\begin{definition}
\label{def:recurrent}
A configuration $f$ is recurrent if it is stable and
 there exists a sandpile  configuration $g \neq 0$ such that $f +g \vufort
f$.
\end{definition}

\medskip

\noindent
The following  important result, giving canonical representatives in the classes of the 
relation  $\simil$  is obtained in \cite{dhar2, biggs1, coriRossin} by different ways.

\begin{theorem}
For any configuration $f$ there exists a unique recurrent configuration $g$ such that
$ f \simil g$.
\end{theorem}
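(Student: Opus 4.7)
The statement asserts that each $\simil$-class contains exactly one recurrent configuration. I would split the proof into existence and uniqueness.

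\textbf{Existence.} Given $f \in \Z^n$, I would first find an equivalent sandpile configuration. The reduced Laplacian $\tilde L$ obtained by deleting row and column $n$ is non-singular (its determinant counts spanning trees, by the matrix-tree theorem), so the system $\tilde L\, c = -(f_1,\ldots,f_{n-1})^t$ is solvable over $\mathbb Q$; by adding a large integer multiple of $\sum_{i<n}\Del{i}$ one obtains integers $c_i$ for which $f' = f + \sum_{i<n} c_i \Del{i}$ has $f'_i \geq 0$ for $i<n$. Proposition~2.1 then gives a unique stable $f_0 \simil f$. If $f_0$ is recurrent we stop; otherwise, fix any nonzero degree-zero sandpile configuration $h$ (e.g.\ $h = \eps^{(1)}+\cdots+\eps^{(n-1)} - (n-1)\eps^{(n)}$) and iterate $f_{k+1} := \mbox{stabilization of } f_k + h$. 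The sandpile-part coordinates are trapped in $\prod_{i<n}[0,d_i-1]$ and the $n$-th coordinate evolves deterministically, so the sequence is eventually periodic; any $g$ on its period satisfies $g + Nh \vufort g$ for $N$ the period length, and is therefore recurrent with $g \simil f$.

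\textbf{Uniqueness.} Suppose $g_1, g_2$ are recurrent with $g_1 \simil g_2$. By recurrence of $g_j$, there is a nonzero sandpile $h_j$ with $g_j + h_j \vufort g_j$, and the sequence of topplings records a firing vector $t^{(j)} \in \Z_{\geq 0}^{n-1}$ such that $h_j = \sum_{i<n} t^{(j)}_i \Del{i}$. Since $g_1 \simil g_2$ we may write $g_2 - g_1 = \sum_{i<n} c_i \Del{i}$. Adding a large multiple $Nh_1$ to both $g_1$ and $g_2$ makes both resulting sandpile configurations dominate $g_1$ componentwise on vertices $i < n$, so the abelian property (Proposition~2.1) applies: the firing vectors used to stabilize $g_1 + Nh_1$ and $g_2 + Nh_1$ agree on vertices $i<n$ up to the contribution of $c$. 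Comparing the two stabilizations (both lie in $\simil$-class of $g_1$, both equal to $g_1$ by the witness of recurrence) forces $c = 0$, hence $g_1 = g_2$.

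\textbf{Main obstacle.} The existence half is essentially algorithmic, relying only on finiteness of the sandpile part and Proposition~2.1. The genuine difficulty is uniqueness, which requires bridging the purely algebraic equivalence $\simil$ (quotient by the full lattice $L_G$) and the much more constrained dynamics $\vufort$ (topplings that respect sandpile positivity on vertices $i<n$). I expect the cleanest way to make this bridge rigorous is via an auxiliary burning-style characterization of recurrence — in the spirit of Dhar's criterion — that canonically selects one stable configuration per class of the sandpile group $\Z^{n-1}/\tilde L$; this would in particular explain why the example one first tries, ``two equivalent stable configurations must be equal'', is false (already for $K_3$ the stable configurations $(0,0,0)$ and $(1,1,-2)$ lie in the same class but only the latter is recurrent), so the recurrence hypothesis cannot be dropped.
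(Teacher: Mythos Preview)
The paper does not prove this theorem: it is quoted from the literature with references to Dhar, Biggs, and Cori--Rossin, so there is no in-paper argument to compare against. I therefore assess your attempt on its own.

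Your existence half is essentially sound. The reduction of an arbitrary $f$ to an equivalent sandpile configuration is phrased loosely (solving $\tilde L\,c=-(f_1,\ldots,f_{n-1})^t$ over $\mathbb Q$ and ``adding a large integer multiple of $\sum_{i<n}\Del{i}$'' are two unrelated moves that do not combine as written), but the intended fact is an easy consequence of connectedness. The iteration-to-periodicity argument producing a recurrent representative is the standard one and is correct.

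Your uniqueness half has a genuine gap, and you effectively concede this in the closing paragraph. Two concrete failures:
\begin{itemize}
\item The witness $h_1$ supplied by Definition~\ref{def:recurrent} may vanish on some vertices $i<n$; on those coordinates $g_2+Nh_1$ need not dominate $g_1$ for any $N$, so the monotonicity you invoke does not start.
\item More seriously, you assert that the stabilization of $g_2+Nh_1$ equals $g_1$ ``by the witness of recurrence'', but $h_1$ is a witness for $g_1$, not for $g_2$. Proposition~\ref{prop:uniqueStable} only guarantees \emph{some} stable configuration, and nothing you have written identifies it with either $g_1$ or $g_2$. The subsequent ``comparison of firing vectors forcing $c=0$'' therefore has no footing.
\end{itemize}
The missing ingredient is exactly what you name: Dhar's burning criterion (Theorem~\ref{cor:burning} in the paper). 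It provides, for \emph{every} recurrent $g$, the specific witness $-\Del{n}$ whose firing vector $(1,\ldots,1)$ is strictly positive on all vertices, and it yields (via Corollary~\ref{cor:caracRecur}, or equivalently the superstable reformulation of Proposition~\ref{prop:recToParking}) an intrinsic combinatorial characterization of recurrent configurations that does not mention the relation $\vufort$. With that in hand one can argue directly: writing $g_2-g_1=\sum_{i\le n}c_i\Del{i}$ with $\min_i c_i=0$ and $Y=\{i:c_i>0\}$ nonempty, the characterization applied to the appropriate side produces a vertex of $Y$ at which the stability bound is violated. Until you actually invoke burning (or the equivalent counting argument matching recurrent configurations with spanning trees), the uniqueness claim is unproved.

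Your closing example is correct and well chosen: it shows precisely why ``stable'' cannot replace ``recurrent'' in the statement, and hence why some nontrivial input beyond Proposition~\ref{prop:uniqueStable} is required.
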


\medskip

In order to characterize the recurrent configurations D. Dhar
used  the configuration $\Del{n}$ and proposed the following algorithm.

\begin{theorem}{\rm Burning Algorithm.}
\label{cor:burning}
The stable  configuration $u$ is recurrent if and only if
$$f - \Del{n} \vufort f$$
Moreover in this sequence of topplings each vertex different from  $x_n$ topples exactly
once.
\end{theorem}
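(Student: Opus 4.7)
The plan is to prove both directions using the abelian property of the sandpile model (Proposition 2.1) together with the uniqueness of recurrent representatives in each $\simil$-class stated just before the theorem. Throughout, set $g_0 = -\Delta^{(n)}$ viewed as a sandpile configuration, i.e., drop the sink coordinate. Since $G$ is connected and $n \geq 2$, the sink $x_n$ has at least one neighbor, so $(g_0)_j = e_{n,j} \geq 0$ for $j<n$ with at least one strict inequality: $g_0$ is a nonzero sandpile configuration. This observation is what lets $-\Delta^{(n)}$ play the role of the witness $g$ in Definition~2.3.

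The easy direction ($\Leftarrow$) is almost immediate: if $f - \Delta^{(n)} \vufort f$, then in the sandpile picture this is precisely $f + g_0 \vufort f$ with $g_0 \neq 0$, so $f$ is recurrent by Definition~2.3.

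For the harder direction ($\Rightarrow$), I would let $h$ denote the unique stable configuration reached from $f + g_0$, which exists by Proposition~2.1, and prove $h = f$. The key sub-lemma is that the stabilization of a recurrent configuration plus any sandpile configuration is again recurrent; this follows from the abelian identity $s(s(a)+b) = s(a+b)$, applied as follows. Since $f$ is recurrent, pick $g' \neq 0$ sandpile with $f + g' \vufort f$, i.e.\ $s(f+g')=f$. Then
\[
s(h + g') \;=\; s\bigl(s(f+g_0) + g'\bigr) \;=\; s(f+g_0+g') \;=\; s\bigl(s(f+g')+g_0\bigr) \;=\; s(f+g_0) \;=\; h,
\]
so $h + g' \vufort h$ with $g' \neq 0$, showing $h$ is recurrent. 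On the other hand, if the topplings realizing $f + g_0 \vufort h$ fire vertex $x_i$ exactly $n_i$ times (for $i<n$), then $h = f - \Delta^{(n)} - \sum_{i<n} n_i \Delta^{(i)}$, hence $h \simil f$. By the uniqueness theorem stated just before, the class of $f$ contains exactly one recurrent configuration, forcing $h = f$. This establishes $f - \Delta^{(n)} \vufort f$.

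Finally, the assertion that each non-sink vertex topples exactly once follows from the identity $h = f$, which rewrites as $\sum_{i<n} n_i \Delta^{(i)} + \Delta^{(n)} = 0$, combined with the fact (recalled in the paragraph following the definition of the $\Del{i}$) that $\sum_{i=1}^n \Delta^{(i)} = 0$ is the unique linear relation among the Laplacian configurations in a connected graph. Matching coefficients forces $n_i = 1$ for all $i < n$. The main subtle step is the sub-lemma that recurrence is preserved under stabilized addition of an arbitrary sandpile configuration; everything else is bookkeeping once the abelian property is invoked.
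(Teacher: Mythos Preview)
Your proof is correct. Note, however, that the paper does not actually supply its own proof of this theorem: it is stated as Dhar's result and used as a black box (the proof that follows in the paper is for Corollary~\ref{cor:caracRecur}, which relies on Theorem~\ref{cor:burning}). So there is no paper proof to compare against.

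That said, your argument is clean and well-suited to the paper's presentation. A few remarks. First, you rely on Theorem~1 (uniqueness of the recurrent representative in each $\simil$-class) to deduce $h=f$; in many treatments the dependence runs the other way, with the burning algorithm used to establish uniqueness. In the context of this paper that is fine, since Theorem~1 is quoted from sources (e.g.\ \cite{biggs1,coriRossin}) whose proofs go through the group structure and do not invoke the burning criterion, so there is no circularity. Second, the abelian identity $s(s(a)+b)=s(a+b)$ that drives your sub-lemma is not stated verbatim in the paper, but it is an immediate consequence of Proposition~\ref{prop:uniqueStable}: from $a\vufort s(a)$ one gets $a+b\vufort s(a)+b$ (the same topplings remain legal after adding the nonnegative $b$), and then uniqueness of the stable limit gives the identity. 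It would be worth making that one-line justification explicit. Third, your use of the uniqueness (up to scalar) of the relation $\sum_i\Delta^{(i)}=0$ to force each $n_i=1$ is exactly the right way to extract the ``exactly once'' statement from $h=f$.
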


\noindent
This algorithm can be translated into  another characterization, giving:

\begin{coro}
\label{cor:caracRecur}
A stable configuration $f$ is recurrent if and only if for any non-empty subset
$Y$ of $X \setminus \{x_n\}$ there is at least  an $x_k$ in $Y$ such that
its  degree in the subgraph spanned by $Y$ is greater than 
or equal to $f_k$, more precisely if the following condition is satisfied:
\begin{equation}
\label{eq:antiParking}
f_k \geq  \sum_{x_i\in Y} e_{i,k} 
\end{equation}
\end{coro}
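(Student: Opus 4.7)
The plan is to derive the characterization directly from Dhar's Burning Algorithm (Theorem \ref{cor:burning}), which says $f$ is recurrent exactly when there is a toppling sequence carrying $f - \Del{n}$ back to $f$ in which every vertex different from $x_n$ fires exactly once. The main bookkeeping ingredient in both directions is a single four-part edge partition: for $x_k \in Y \subseteq X \setminus \{x_n\}$, with $T$ the set of non-sink vertices already fired and $U$ the set of non-sink vertices that are neither in $Y$ nor yet fired, one has $d_k = e_{n,k} + \sum_{x_i \in T} e_{i,k} + \sum_{x_i \in Y} e_{i,k} + \sum_{x_i \in U} e_{i,k}$, using $e_{k,k}=0$ to include $x_k$ in the $Y$-sum harmlessly.

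For the forward direction I would assume $f$ is recurrent, fix a burning sequence from $f - \Del{n}$, and for a given non-empty $Y \subseteq X \setminus \{x_n\}$ take $x_k$ to be the first vertex of $Y$ to fire; at that moment $T$ is disjoint from $Y \cup \{x_n\}$. The value at $x_k$ immediately before firing is $f_k + e_{n,k} + \sum_{x_i \in T} e_{i,k}$, the $e_{n,k}$ term arising from the initial $-\Del{n}$, and legality of the firing forces this value to be at least $d_k$. Substituting the partition identity yields $f_k \geq \sum_{x_i \in Y} e_{i,k} + \sum_{x_i \in U} e_{i,k} \geq \sum_{x_i \in Y} e_{i,k}$, which is the required inequality.

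For the converse I would assume the subset condition and construct a burning sequence greedily: set $Y_0 = X \setminus \{x_n\}$ and, having defined $Y_j$, use the hypothesis to pick some $x_{k_{j+1}} \in Y_j$ with $f_{k_{j+1}} \geq \sum_{x_i \in Y_j} e_{i,k_{j+1}}$, then set $Y_{j+1} = Y_j \setminus \{x_{k_{j+1}}\}$. Applying the partition identity with $T = X \setminus (Y_j \cup \{x_n\})$ and $U = \emptyset$ shows that the hypothesis on $f_{k_{j+1}}$ is exactly the statement that its current value has reached the firing threshold $d_{k_{j+1}}$; the intermediate configurations remain non-negative on non-sink vertices because each firing only donates tokens. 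After $n-1$ steps every non-sink vertex has fired exactly once, and since $\sum_{i \neq n} \Del{i} = -\Del{n}$ the terminal configuration is $f$ itself, so Theorem \ref{cor:burning} certifies recurrence.

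I do not expect any substantive obstacle: the proof reduces to recognizing the four-part edge partition and tracking the $e_{n,k}$ offset produced by $-\Del{n}$, since the Burning Algorithm has already absorbed the combinatorial difficulty.
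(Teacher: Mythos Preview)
Your proof is correct and follows essentially the same route as the paper: both directions hinge on Dhar's Burning Algorithm, picking in the forward direction the first vertex of $Y$ to fire and in the converse greedily peeling off vertices to build the burning sequence. Your four-part edge partition is just a more explicit packaging of the paper's two inequalities; the only slip is the phrase ``each firing only donates tokens'' (the firing vertex itself loses $d_k$), but non-negativity there follows from having met the threshold, which you already use.
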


\begin{proof}Let $f$ be a recurrent configuration, and $Y$ be a subset of $X$,  then by Dhar's Burning Algorithm,
starting from the configuration $f- \Del{n}$ 
there is a sequence of topplings 
 of the vertices  in which any vertex topples. We may 
suppose that the vertices are numbered 
in the order in which they topple, $x_1$ just after $x_n$ ,
then $x_2$ and so on until
$x_{n-1}$ then for allowing a toppling at
 vertex $x_i$ each $f_i$ has to satisfies the condition:
$$ d_i \leq f_i + \sum_{j=1}^{i-1}  e_{i,j}$$ 
Now for any subset   $Y$ of $X$,
let $k$ be the smallest integer such that $x_k \in Y$, then since there
is no $x_i\in Y$ with $i$  less than $k$ we have:
$$ d_k \geq  \sum_{j=1}^{k-1}  e_{j,k} +   \sum_{x_i\in Y} e_{i,k}$$
Putting  $i=k$ in the first inequality and the two inequalities together gives the result.

\medskip
Conversely if $f$ is a stable configuration satisfying 
condition \ref{eq:antiParking}  we build a toppling sequence 
starting with vertex $x_n$, then taking as $x_1$ the vertex in $Y=\{x_1, x_2, \ldots, x_{n-1}\}$ satisfying $f_1 \geq \sum_{i=2}^{n-1} e_{1,i}$, this vertex can 
topple after $x_n$ 
 since in that case $f_1 + e_{1,n} \geq \sum_{i=2}^{n} e_{1,i}= d_1$.
Then at each step, a  vertex $x_j$ such that
  $f_j \geq \sum_{i=j+1}^{n-1} e_{i,j}$ exists taking $Y = X \setminus
\{x_n, x_1, x_2, \ldots x_{j-1}\}$, this vertex can topple at this stage.
We have thus built a sequence of toplings proving that $f$ is recurrent.
\end{proof}

\subsection{Parking configurations}
\label{sec:ParkingConf}
We consider a kind of dual notion to that of  recurrent configuration,
  such configurations are often called
 {\em parking configurations} since in the case of complete
graphs, these are exactly the parking functions, a  central object in combinatorics.

\begin{definition}
\label{def:parking}
A sandpile configuration $f$ on a graph $G$ is a  parking configuration if
for any subset $Y$ of
$X\setminus \{x_n\}$ there is a vertex $x_k$ in $Y$ such that 
$f_k$ is less than the number of edges which are incident to $x_k$ and a vertex out of $Y$.
 More precisely if the exists  $x_k \in Y$  such that 
$ f_k<  \sum_{x_i \notin Y} e_{i,k}$.
\end{definition}

In other words  a sandpile configuration $f$ 
is a  parking configuration if and only if 
 there is no 
toppling of all the vertices in a subset $Y$ of $\{x_1, x_2, \ldots x_{n-1}\}$ leaving all the  $f_i \geq 0$.
For this reason these configurations are also called {\em superstable} (as for instance in \cite{perkPerlWilm}).

\begin{propo}
\label{prop:recToParking}
Let $f$ be a stable configuration and let $\delta $ be the configuration such 
that $\delta_i = d_i - 1$. Define $\beta(f) =  \delta- f$. Then $f$ is recurrent if and only if $\beta(f)$
is a parking configuration.
\end{propo}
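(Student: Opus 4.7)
The plan is to derive this directly from Corollary \ref{cor:caracRecur}, since that corollary recasts recurrence as the negation of a parking-style condition applied to $f$, and the map $\beta$ is essentially designed to turn one condition into the other.

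First I would check that $\beta(f)$ is indeed a sandpile configuration, i.e.\ $\beta(f)_i \geq 0$ for every $i<n$. This is immediate from stability of $f$: by Definition \ref{def:toppling} we have $f_i < d_i$ for $i<n$, so $\beta(f)_i = d_i - 1 - f_i \geq 0$.

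Next I would write out side by side the two conditions to compare. By Corollary \ref{cor:caracRecur}, the stable configuration $f$ is recurrent if and only if for every non-empty $Y \subseteq X \setminus \{x_n\}$ there exists $x_k \in Y$ with
$$f_k \geq \sum_{x_i \in Y} e_{i,k}.$$
By Definition \ref{def:parking}, the sandpile configuration $\beta(f)$ is a parking configuration if and only if for every non-empty $Y \subseteq X \setminus \{x_n\}$ there exists $x_k \in Y$ with
$$\beta(f)_k < \sum_{x_i \notin Y} e_{i,k}.$$
The key computation is then to substitute $\beta(f)_k = d_k - 1 - f_k$ and use the fact that the graph has no loops, so $e_{k,k}=0$ and
$$d_k \;=\; \sum_{i=1}^n e_{i,k} \;=\; \sum_{x_i \in Y} e_{i,k} \;+\; \sum_{x_i \notin Y} e_{i,k}.$$
The inequality $\beta(f)_k < \sum_{x_i \notin Y} e_{i,k}$ then rearranges to $f_k > d_k - 1 - \sum_{x_i \notin Y} e_{i,k} - 1 = \sum_{x_i \in Y} e_{i,k} - 1$, i.e.\ $f_k \geq \sum_{x_i \in Y} e_{i,k}$, which is exactly the condition from Corollary \ref{cor:caracRecur}. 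Hence for every $Y$ the existence of a suitable $x_k$ for $f$ is equivalent to the existence of a suitable $x_k$ (indeed the same $x_k$) for $\beta(f)$, and the two global conditions are equivalent.

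There is no real obstacle here: the whole argument is essentially a one-line rewriting once the two characterizations are aligned. The only thing to be careful about is the passage from strict to non-strict inequality (both sides are integers, so $a<b$ is the same as $a\leq b-1$, and this is where the $-1$ in the definition of $\delta$ plays its role), and the use of the no-loops hypothesis $e_{k,k}=0$ in splitting $d_k$ as a sum over $Y$ and its complement.
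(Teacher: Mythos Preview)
Your proof is correct and follows exactly the paper's approach: compare Corollary~\ref{cor:caracRecur} with Definition~\ref{def:parking} via the splitting $d_k = \sum_{x_j \in Y} e_{k,j} + \sum_{x_j \notin Y} e_{k,j}$, which is precisely the single observation the paper records. (There is a stray extra ``$-1$'' in your displayed rearrangement --- it should read $f_k > d_k - 1 - \sum_{x_i \notin Y} e_{i,k} = \sum_{x_i \in Y} e_{i,k} - 1$ --- but your final conclusion is unaffected.)
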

\begin{proof}
It suffices to compare Corollary \ref{cor:caracRecur} and Definition  \ref{def:parking}
and to notice that:
$$ d_k = \sum_{x_j\notin Y} e_{k,j} + \sum_{x_j\in Y} e_{k,j}$$

\end{proof}
\begin{coro}
\label{prop:caracParking}
For any configuration $f$ there exists a unique parking configuration
$g$  such that $f \simil g$.
\end{coro}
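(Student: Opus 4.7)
The plan is to derive this corollary directly from the theorem giving unique recurrent representatives, via the involution $\beta(f) = \delta - f$ from Proposition \ref{prop:recToParking}.

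First I would observe that $\beta$ is a genuine involution on $\Z^n$ (since $\beta(\beta(f)) = \delta - (\delta - f) = f$) and, by Proposition \ref{prop:recToParking}, it exchanges the set of recurrent configurations with the set of parking configurations. Although $\beta$ is not linear over $L_G$, it is compatible with $\simil$ in the following sense: for any $f, g$, we have $f \simil g$ if and only if $\beta(f) \simil \beta(g)$, because $\beta(f) - \beta(g) = g - f \in L_G$ exactly when $f - g \in L_G$. This is the only algebraic fact I need.

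For existence, given an arbitrary configuration $f$, I would apply the earlier theorem (unique recurrent representative modulo $\simil$) to the configuration $\delta - f$: there is a recurrent configuration $h$ with $h \simil \delta - f$. Then $g := \beta(h) = \delta - h$ is parking by Proposition \ref{prop:recToParking}, and by the compatibility observation $g = \beta(h) \simil \beta(\delta - f) = f$, so $g$ is the required parking configuration.

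For uniqueness, suppose $g_1$ and $g_2$ are parking configurations with $g_1 \simil f \simil g_2$. Then $\beta(g_1) = \delta - g_1$ and $\beta(g_2) = \delta - g_2$ are both recurrent (Proposition \ref{prop:recToParking} again) and satisfy $\beta(g_1) \simil \beta(g_2)$. By uniqueness of the recurrent representative, $\beta(g_1) = \beta(g_2)$, and applying $\beta$ once more (or simply subtracting from $\delta$) yields $g_1 = g_2$. No real obstacle is expected: the whole argument is a transport of structure across the bijection $\beta$, so all the hard work has already been done in establishing Proposition \ref{prop:recToParking} and the recurrent-representative theorem.
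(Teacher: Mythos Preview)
Your proof is correct and follows the same route as the paper: apply the recurrent-representative theorem to $\delta - f$ and transport the result back through $\beta$. The paper's proof is terser (it writes only the existence half), whereas you also spell out the uniqueness argument and the compatibility $\beta(f)-\beta(g)=g-f\in L_G$; these additions are sound and make the argument self-contained.
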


\begin{proof}
For any  configuration $f$  let $g$ be the recurrent configuration such that 
$g \simil \delta - f $ then $ \beta(g)$ is a parking configuration such that $ f \simil  \beta(g)$.
\end{proof}

In this paper we will often consider the parking configuration  in a class as a representative of this class.
A parking configuration $f$  in a graph with $n$ vertices will be represented by the subsequence  consisting of this first $n-1$ terms
and an integer $s$  such that:

\begin{equation}
\label{eq:defParking}
(f_1, f_2, \ldots , f_{n-1}) {\hskip 1cm}   s = f_n
\end{equation}

\noindent 
hence $s$ represents the number of tokens on the distinguished often called  " sink " vertex $x_n$.

\subsubsection*{Parking configurations and acyclic orientations}

An orientation of $G$ is a directed graph obtained from $G$ by
orienting each edge, so that one end vertex becomes  the head and the
other one  the tail.  A directed path in such a graph
consists of a sequence of edges such that the head of an edge is
equal to the tail of the subsequent one.

The orientation is acyclic if
there is no directed circuit, i.e. a directed 
path starting and ending at the same
vertex. We associate to any parking configuration $f$
an acyclic orientation by :

\begin{propo}
\label{prop:parkingToAcyclic}
For any parking configuration $f$ on $G=(X,E)$ there exists
at least one  acyclic orientation $\overrightarrow{G}$ such that for any vertex $x_i$,
 $i\neq n$, $f_i$  is strictly less  than its 
indegree $d^{-}_i$ (i.e. the number of edges with head $x_i$).
\end{propo}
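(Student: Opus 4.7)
The plan is to produce the required orientation by a peeling procedure that builds a linear order on $X$ with $x_n$ first, and then orient every edge from its earlier endpoint to its later endpoint. Let $Y_0 = X \setminus \{x_n\}$. Inductively, having constructed $Y_{j-1}$ non-empty, apply the parking condition (Definition \ref{def:parking}) to $Y_{j-1}$ to obtain a vertex $x_{k_j} \in Y_{j-1}$ with
\[
f_{k_j} < \sum_{x_i \notin Y_{j-1}} e_{i,k_j},
\]
and set $Y_j = Y_{j-1} \setminus \{x_{k_j}\}$. This produces vertices $x_{k_1}, x_{k_2}, \ldots, x_{k_{n-1}}$ and hence an ordering $x_n \prec x_{k_1} \prec x_{k_2} \prec \cdots \prec x_{k_{n-1}}$.

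I would then define $\overrightarrow{G}$ by orienting every edge $\{x_i, x_j\}$ of $G$ from its $\prec$-smaller endpoint toward its $\prec$-larger endpoint. Acyclicity is immediate: a directed cycle would force a strict $\prec$-descent back to its starting vertex, contradicting the fact that $\prec$ is a linear order.

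It remains to check the indegree condition. For a vertex $x_{k_j}$ (with $j \ne 0$ and $j \le n-1$), the set of vertices strictly preceding $x_{k_j}$ in $\prec$ is precisely $\{x_n, x_{k_1}, \ldots, x_{k_{j-1}}\} = X \setminus Y_{j-1}$. Counting edges to $x_{k_j}$ with multiplicity, this gives
\[
d^{-}_{k_j} = \sum_{x_i \notin Y_{j-1}} e_{i,k_j} > f_{k_j},
\]
where the inequality is the defining property of $x_{k_j}$ from the peeling step. Hence $f_i < d^{-}_i$ for every $i \ne n$, as required.

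The whole argument rests on the peeling step being legitimate, so the only point that needs genuine care is the observation that the parking condition in Definition \ref{def:parking} applies to \emph{every} non-empty subset of $X \setminus \{x_n\}$, which is exactly what allows the induction to proceed all the way down to $Y_{n-1} = \emptyset$. Beyond this, the proof is essentially bookkeeping: translating the combinatorial "some vertex is deficient in its cut" property of parking configurations into the geometric "incoming edges exceed tokens" property along an explicit vertex elimination order.
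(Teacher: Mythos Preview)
Your proof is correct and follows essentially the same approach as the paper's: both peel off vertices one at a time using the parking condition, building a linear order starting from $x_n$, and orient every edge from the earlier endpoint to the later one. The only cosmetic difference is that the paper orients edges incrementally as each vertex is removed, whereas you first construct the full order and then orient globally; your version has the advantage of making acyclicity explicit via the linear order, which the paper leaves implicit.
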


\begin{proof}
We orient the edges using an algorithm that terminates after $n$ steps.
Consider $Y = \{x_1,x_2, \ldots, x_{n-1}\}$. From the definition of 
 parking configurations, 
there is at least one vertex $x_i$ such that 
$f_i < e_{i,n}$ then orient all  these $e_{i,n}$
 edges from $x_n$ (the tail) to $x_i$ (the head), and remove $x_i$ from $Y$.
Repeat the following operation until $Y$ is empty:
   \begin{itemize}
\item Find $x_k$ in $Y$ such that $f_k < \sum_{x_j \notin Y} e_{k,j}$; orient all the edges joining any vertex $j$ 
outside $Y$ to $x_k$  from $x_j$ to $x_k$ and remove $x_k$ from $Y$.
\end{itemize}

\end{proof}
In the preceding proof one may recognize a scheduling of topplings related to the Dhar criterion applied to the recurrent configuration $\beta(f)$.
Notice that more precise results involving maximal parking configurations are given in \cite{bensonChakTetali}
\section{Configurations on the complete graph}

\subsection{Configuration classes in $K_n$}
\label{subsec:algo1}
In the complete graph $K_n$ each of the $n$ vertices has all the $n-1$ other vertices as neighbors via a simple edge.

 For  a configuration $f$ in the complete graph $K_n$  the determination of the parking configuration equivalent to it is facilitated by the 
following lemma and its corollaries:

 \begin{lemma}
 \label{lem:equiv0}
 A configuration $f$ of $K_n$ is toppling equivalent to 0 if and only if the two following conditions are satisfied:
$$
 deg(f) = 0  \hskip 0.5cm \makebox{\rm and}  \hskip 0.5cm
 \forall i, j   \leq n  \hskip 0.2cm f_i = f_j  \hskip 0.5cm (\bmod  \ n)$$
 \end{lemma}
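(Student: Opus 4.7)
The plan is to prove both directions by a direct computation, exploiting the very simple form of the Laplacian configurations of $K_n$: one has $\Del{i}_i = n-1$ and $\Del{i}_j = -1$ for every $j \neq i$.

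For the ``only if'' direction, I would first observe that both conditions are preserved by addition of any $\Del{i}$, and are satisfied by the zero configuration. Indeed $\deg(\Del{i}) = 0$, and for any two coordinates $j, k$, the difference $\Del{i}_j - \Del{i}_k$ equals either $0$ or $\pm n$, so the congruence class of the coordinates mod $n$ is unchanged. Since $L_G$ is generated (as a group) by the $\Del{i}$, any $f \simil 0$ satisfies both conditions.

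For the converse, I would turn the two conditions into an explicit decomposition of $f$ in terms of the $\Del{i}$. Let $r$ be the common residue of the $f_i$ modulo $n$, and write $f_i = n c_i + r$ with $c_i \in \Z$. The degree-zero hypothesis $\sum_i f_i = 0$ then gives $\sum_i c_i = -r$. I would now verify that the combination $g = \sum_{i=1}^n c_i \Del{i}$ equals $f$ coordinate by coordinate: using $\Del{i}_i = n-1$ and $\Del{i}_j = -1$ ($j \neq i$),
\[
g_j = (n-1) c_j - \sum_{i \neq j} c_i = n c_j - \sum_{i=1}^n c_i = n c_j + r = f_j .
\]
Hence $f \in L_G$, i.e. $f \simil 0$.

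There is essentially no obstacle: both directions reduce to one-line computations once one writes out $\Del{i}$ explicitly. The only mildly delicate point is the bookkeeping in the converse, namely noticing that the degree hypothesis is exactly what forces $\sum c_i + r = 0$, which in turn is precisely what makes the candidate combination $\sum c_i \Del{i}$ reproduce the constant part $r$ of each coordinate of $f$.
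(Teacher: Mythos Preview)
Your proof is correct. The necessary direction is identical to the paper's. For the sufficient direction, however, you take a genuinely different and more direct route.

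The paper proceeds by induction on $\sum_i |f_i|$: it locates an index $i$ with $|f_i|$ maximal, and either (when $f_i > n$) produces a toppling-equivalent configuration with smaller $\sum |f_i|$ by subtracting $\Del{i}-\Del{j}$ for some $j$ with $f_j<0$, or (when $0<f_i\le n$) argues that all coordinates are either $f_i$ or $f_i-n$ and exhibits $f$ as $\sum_{j\in I}\Del{j}$ for the appropriate index set $I$. Your argument instead writes $f_i = nc_i + r$ for the common residue $r$, uses the degree hypothesis to get $\sum_i c_i = -r$, and then checks in one line that $\sum_i c_i\Del{i} = f$. This is cleaner: it avoids the case analysis and the induction, and it produces the explicit $L_G$-decomposition of $f$ in closed form rather than via a reduction process. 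The paper's approach, on the other hand, is slightly more constructive in spirit (it tells you which topplings to perform step by step), which may be pedagogically useful in the sandpile context, but as a proof yours is shorter.
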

 \begin{proof}
 The necessary condition follows from  
the fact that these relations  are not modified by any toppling and are satisfied by the
parking configuration equivalent to $0$ which is  $(0,  \ldots , 0)$ and by all the $\Del{i}$.
Ä

\bigskip

The sufficient condition is obtained by induction on the sum of the $|f_i|$. 
More precisely,  for a given 
configuration $f$ satisfying the conditions above,
let $i$ be such that $|f_i|$ is maximal. Replacing $f$ by $-f$ if necessary, allows to assume  $f_i > 0$.

We consider two cases:
\begin{itemize}
\item If $f_i > n$ then, since $deg(f) = 0$, there exists $j$ such that $f_j <0$. Let us  consider 
$g=f-\Del{i} + \Del{j}$. We have $g_k = f_k$ for $k \notin \{i,h\}$ and $g_i = f_i -n,g_j = f_j+n$  hence
 this configuration satisfies also  the conditions of the Lemma;  and $\sum_i |g_i| < \sum_i |f_i| $ which allows us  to use the inductive hypothesis and obtain $ g \simil 0$ and 
hence $f = g+ \Delta^{(i)}-\Del{j} \simil 0$.

\item If $0 < f_i \leq n$, the equations above imply that the only possible values for $f_j$ are either $f_i$ or $f_i-n$.
Let $k \geq 1$ denote the number of $j$ such that  $f_j=f_i$. 
Since $deg(f) = 0$ we have:
$$kf_i + (n-k) (f_i-n) =0$$
hence $k=n-f_i$.
Let $ I = \{j | f_j = f_i \}$ then it is then  not difficult to check that:
$$ f = \sum_{j \in I} \Delta^{(j)}.$$
\end{itemize}

 \end{proof}
 
 Notice that this characterization may also be considered as given by  the toppling invariants defined in \cite[equation(3.11)]{dharRuelleVerma}.

 \bigskip
 
 Using the fact that $ f \simil g$ if and only if $ f -g \simil 0$ we obtain:
 
\begin{coro}
\label{coro:equivinKn}
Two configurations $f$ and $g$ are toppling equivalent in $K_n$
if and only if the following holds
 $$deg(f) = deg(g)  \hskip 0.7cm  \makebox{\rm and for any} \hskip 0.2cm1 \leq i, j \leq n,  
\hskip 0.2cm f_i - f_j= g_i -g_j \hskip 0.2cm (\bmod  \ n).$$
\end{coro}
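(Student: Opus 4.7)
The plan is to reduce this corollary directly to Lemma \ref{lem:equiv0}, exactly as the text's preamble suggests. Since $L_G$ is a subgroup of $\mathbb{Z}^n$, the relation $\simil$ is translation-invariant, so $f \simil g$ holds if and only if $f - g \simil 0$. Thus it suffices to translate the conditions of Lemma \ref{lem:equiv0} applied to the configuration $h = f - g$ into conditions on the pair $(f,g)$.

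First, I would observe that $\deg$ is a $\mathbb{Z}$-linear functional on $\mathbb{Z}^n$, so $\deg(h) = \deg(f) - \deg(g)$, and hence $\deg(h) = 0$ is equivalent to $\deg(f) = \deg(g)$. Next, for the congruence condition of Lemma \ref{lem:equiv0}, we have $h_i \equiv h_j \pmod{n}$ iff $(f_i - g_i) - (f_j - g_j) \equiv 0 \pmod{n}$, which rearranges to $f_i - f_j \equiv g_i - g_j \pmod{n}$ for all $i,j$. Combining these two equivalences with Lemma \ref{lem:equiv0} applied to $h$ yields precisely the statement of the corollary.

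There is no real obstacle here: the whole content lies in Lemma \ref{lem:equiv0}, and this corollary is just the ``difference form'' of that lemma. The only small care needed is to note that the condition ``for all $i,j$'' in the congruence is symmetric in $f$ and $g$ (one can equivalently state it as ``$f_i - g_i \equiv f_j - g_j \pmod n$'' for all $i,j$, i.e., the vector $f - g$ is constant modulo $n$), which matches precisely the hypothesis in the lemma about $h$. Therefore the proof is a two-line application of Lemma \ref{lem:equiv0} together with the group property $f \simil g \iff f - g \in L_G$.
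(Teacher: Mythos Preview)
Your proposal is correct and follows exactly the paper's own approach: the paper simply remarks that $f\simil g$ iff $f-g\simil 0$ and then states the corollary as an immediate consequence of Lemma~\ref{lem:equiv0}. Your rewriting of the two conditions on $h=f-g$ into conditions on the pair $(f,g)$ is precisely the intended (and only) step.
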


Another consequence is the following:
\begin{coro}
\label{coro:nbSandPByClass}
Any class of the toppling equivalence on $K_n$ contains exactly $n$ sandpile configurations
such that $0 \leq f_i < n$ for all $i < n$, all the stable configurations of the class  are among them.

\end{coro}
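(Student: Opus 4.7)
The plan is to use Corollary \ref{coro:equivinKn} to parametrize all configurations in a given class and then count those whose first $n-1$ entries lie in $[0,n)$.

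First I fix a class $C$ and pick any representative $f \in C$. By Corollary \ref{coro:equivinKn}, a configuration $g$ lies in $C$ if and only if $\deg(g)=\deg(f)$ and there exists an integer $c$ (depending on $g$) such that $g_i \equiv f_i + c \pmod{n}$ for every $i$. Consequently any $g \in C$ may be written uniquely as
\[
g_i = f_i + c + n k_i, \qquad i=1,\ldots,n,
\]
with $c \in \{0,1,\ldots,n-1\}$ and integers $k_1,\ldots,k_n$; the degree condition forces $c + \sum_{i=1}^n k_i = 0$.

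Next I count representatives with $0 \leq g_i < n$ for all $i<n$. For each fixed value of $c \in \{0,1,\ldots,n-1\}$, the requirement $0 \leq g_i < n$ together with $g_i \equiv f_i + c \pmod{n}$ determines $g_i$ uniquely for each $i<n$ (it is simply the canonical representative of $f_i+c$ modulo $n$ in $[0,n)$), hence $k_i$ is determined. The remaining coordinate $g_n$ is then forced by the degree equality $\deg(g)=\deg(f)$, which also fixes $k_n$ consistently with $c+\sum k_i=0$. Different values of $c$ yield configurations that differ in their common residue mod $n$, so they are genuinely distinct. This produces exactly $n$ configurations satisfying the bounds.

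Finally, for the second assertion I observe that in $K_n$ every non-sink vertex $x_i$ has degree $d_i=n-1$, so a stable configuration satisfies $0 \leq f_i \leq n-2 < n$ for all $i<n$ by Definition \ref{def:toppling}. Hence any stable configuration in the class automatically meets the bounds $0 \leq f_i < n$ and therefore belongs to the list of $n$ representatives counted above. The only mild subtlety will be in justifying the uniqueness of the lift once $c$ is fixed (making sure the degree constraint is automatically consistent with the choice $c+\sum k_i = 0$), but this is straightforward bookkeeping modulo $n$.
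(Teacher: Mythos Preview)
Your proof is correct and follows essentially the same approach as the paper: both arguments use the characterization of toppling equivalence in $K_n$ (Corollary~\ref{coro:equivinKn}) to see that configurations in a class differ by a common additive shift $c\bmod n$ on all coordinates, and then observe that each of the $n$ possible shifts yields exactly one configuration with first $n-1$ entries in $[0,n)$. The only cosmetic difference is that the paper anchors the construction at the parking representative while you start from an arbitrary one; the resulting families $\{f^{(k)}\}$ and $\{g^{(c)}\}$ coincide.
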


\begin{proof}
For each class of the toppling equivalence there is a parking configuration $f$ which 
clearly satisfies the condition. 
Now for any $k = 1, 2, \ldots , n-1$ the configuration $f^{(k)}$ given by 
$$f^{(k)}_i = f_i + k {\hskip 0.3cm} (mod \ n)$$
for $i < n$ and $f^{(k)}_n = deg(f) - \sum_{i=1}^{n-1} f^{(k)}_i$, 
is a configuration equivalent to $f$ and all these configurations are distinct.

Conversely a configuration $g$ satisfying $g_i \leq n-1$ for all $i <n$ equivalent to $f$ is  
equal to one of the $f^{(k)}$, where $k = f_1 - g_1{\hskip 0.2cm} (mod \ n)$
\end{proof}

\bigskip

As a direct consequence of Corollary \ref{coro:equivinKn} we have an efficient 
algorithm computing from any configuration of $K_n$ an equivalent configuration
with relatively small values of the $f_i$ for $i < n$:

\noindent
{\bf Algorithm.} Given a configuration $f$ of $K_n$  one can find a configuration $g$ toppling equivalent
 to $f$ and such that $0 \leq g_i < n$ for any $1 \leq i \leq n-1$ by setting:
 
  $$g_i= f_i -f_1 \ (\bmod \ n) {\hskip 0.15cm}  \mbox{\rm for} {\hskip 0.1cm}  i < n,  {\hskip 0.3cm}    \mbox{\rm and} {\hskip 0.3cm}    g_n = deg(f) - \sum_{i=1}^{n-1} g_i.$$
  
  \bigskip

Notice that this algorithm performs $O(n)$ arithmetic operations on integers.

  \subsection{Parking configurations}
  
 Since in  the complete graph $K_n$ the $n$ vertices have all the $n-1$ other vertices as neighbors,
 a configuration $f$ is a parking configuration if for any subset $Y$ of $\{x_1, x_2 \cdots, x_{n-1} \}$ containing $p$ vertices 
there is 
at least one vertex $x_k \in Y $ such that $f_k < n-p$.  
The sequences $f_1, f_2, \ldots , f_{n-1}$ satisfying this condition 
are well known in combinatorics and are called  parking functions, they are characterized by the simpler
following condition:
\begin{propo}
\label{prop:parkingFunction}
A sequence $(f_1, f_2, \ldots, f_{n-1})$ corresponds to the first $n-1$ values
of a parking configuration of $K_n$, if and only if after reordering it as a weakly increasing sequence
$(g_1, g_2, \ldots, g_{n-1})$ one has $g_i < i$, for all $i= 1, \ldots,  n-1$.
\end{propo}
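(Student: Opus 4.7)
The plan is to establish the two implications of the equivalence, using the combinatorial characterization of parking configurations in $K_n$ given just above the statement: $f$ is parking iff for every nonempty $Y \subseteq \{x_1,\ldots,x_{n-1}\}$ of size $p$, some $x_k \in Y$ has $f_k < n-p$. The key observation is that the condition only depends on the multiset of values $f_1,\ldots,f_{n-1}$, since the subsets $Y$ range over all possible sizes. This means one may work directly with the sorted sequence $(g_1,\ldots,g_{n-1})$.

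For the forward direction (parking implies $g_i < i$), I will argue by contrapositive. Suppose $g_i \geq i$ for some $i$. Since the sequence is weakly increasing, all of $g_i, g_{i+1}, \ldots, g_{n-1}$ are $\geq i$, so the set $Y$ of those indices $k \in \{1,\ldots,n-1\}$ with $f_k \geq i$ has cardinality at least $n-i$. Taking $p = |Y| \geq n-i$, one has $n-p \leq i$ and every $x_k \in Y$ satisfies $f_k \geq i \geq n-p$. This violates the parking condition, so the hypothesis $g_i \geq i$ is impossible.

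For the converse, assume $g_i < i$ for all $i$, and let $Y \subseteq \{x_1,\ldots,x_{n-1}\}$ be any nonempty subset of size $p$. I will choose the vertex $x_k \in Y$ minimising $f_k$. Among any $p$ of the $n-1$ values, the smallest one is bounded above by the $(n-p)$-th smallest in the full sorted sequence, namely $g_{n-p}$, simply because the $p-1$ values strictly above the minimum cannot all lie among the top $p-1$ positions of the sort. Hence $\min_{x_k \in Y} f_k \leq g_{n-p} < n-p$ by the hypothesis applied to $i = n-p$, producing the required vertex.

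No step looks like a real obstacle; the mild care needed is in the index-counting arguments (checking that the extremal choices of $Y$ in one direction and of $x_k$ in the other are indeed tight) and in verifying that the definition from the previous paragraph is correctly transcribed as $f_k < n-p$ for a set $Y$ of size $p$ in the complete graph, where the number of edges from $x_k$ to vertices outside $Y$ is $n-p$ (this includes the sink $x_n$). Once these indexing conventions are fixed, both implications become short pigeonhole-style arguments on the sorted sequence.
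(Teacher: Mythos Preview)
Your proof is correct. Both directions are clean pigeonhole arguments applied to the sorted sequence, and your use of the characterization stated just before the proposition (that for every $Y$ of size $p$ some $f_k < n-p$) is exactly the right starting point.

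Note that the paper does not actually supply a proof of this proposition: parking functions are a classical object in combinatorics, and the authors simply state the well-known characterization without argument. So there is no ``paper's proof'' to compare against; your argument fills in what the paper leaves implicit, and does so along the most natural line.

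One small remark on exposition: in the converse direction, your sentence ``the $p-1$ values strictly above the minimum cannot all lie among the top $p-1$ positions of the sort'' is slightly oblique. It would be cleaner to say directly that the $p$ indices of $Y$ cannot all correspond to the top $p-1$ positions $g_{n-p+1},\ldots,g_{n-1}$ of the sorted sequence, so at least one of them has value at most $g_{n-p}$. This is the same pigeonhole step, just phrased more transparently.
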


\subsection{Dyck words} 
\label{sec:dyck}

We consider words on  the alphabet with two letters $\{a,b\}$. For a word $w$,
we denote $|w|_x$  (where $ x \in \{a,b\}$) the number of occurrences of the letter $x$ in  the word $w$.
Hence the {\em length} $|w|$ of $w$ is equal to $|w|_a+|w|_b$, we use also the mapping $\delta$ associating to any word $w$ the integer $\delta(w) = |w|_a- |w|_b$.
The word $u$ is a  {\em prefix} of $w$ if $w = uv$, this prefix is  {\em strict} if $u \neq w$. 
Two words $w, w'$ are {\em conjugate}, if they may be written like $w=uv, w' =vu$.
 For any positive integer $n$, we denote $A_n$ the set of words containing $n-1$ occurrences of the letter  $a$ and $n$ occurrences of $b$.
Notice that a word $w$ is in  $A_n$ if and only if $|w| = 2n-1$ and  $\delta(w) = -1$.
The set  $D_n$ of Dyck words followed by an occurrence of $b$ will have a central role in 
the study of configurations on $K_n$, it is defined as follows,
\begin{definition}
\label{def:Dn}
The subset  $D_n$ of  $A_n$ is the set of words $w$  such that $\delta(u) \geq 0$ for any strict  prefix $u$ of $w$.
\end{definition}
A {\em Dyck word} is a word $w$ such that $|w|_a = |w|_b$ and $\delta(u) \geq 0$ for any 
prefix $u$ of $w$, hence $w$ is a Dyck word if and only if $wb \in D_n$ for $n= |w_a|+1$.

The classical Cyclic Lemma (see  \cite{dvoMotzk}) may be stated as follows:

 \begin{lemma}
 \label{lem:cyclic} 
 Any word $w$ of $A_n$  admits a unique
 factorization 
 $$w  = uv$$ 
 such that $vu$ is in $D_n$.
 Moreover  $u$ is the shortest prefix of $w$ such that $\delta(u)$ is minimal among all 
 prefixes of $w$.
  \end{lemma}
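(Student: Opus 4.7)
The plan is to characterize the cut point via the running sum $\delta$. Let $m = \min \{\delta(u') : u' \text{ is a prefix of } w\}$; since $\delta(w) = -1$ we have $m \leq -1$. Define $u$ to be the \emph{shortest} prefix of $w$ with $\delta(u) = m$, and set $w = uv$. The moreover-clause of the lemma will then be true by construction, so the real work is to verify that this $u$ gives $vu \in D_n$, and that no other factorization does.

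For existence I would check the defining condition of $D_n$ on strict prefixes of $vu$, which split into two types: either (i) a prefix $v'$ of $v$, or (ii) a word of the form $vu'$ with $u'$ a proper prefix of $u$. In case (i), $uv'$ is a prefix of $w$, so $\delta(uv') \geq m = \delta(u)$, giving $\delta(v') \geq 0$. In case (ii), because $u$ is the \emph{shortest} minimiser, every proper prefix $u'$ of $u$ satisfies $\delta(u') > m$; by integrality $\delta(u') \geq m+1$. Combined with $\delta(v) = \delta(w) - \delta(u) = -1 - m$, this gives $\delta(vu') = \delta(v) + \delta(u') \geq (-1-m) + (m+1) = 0$. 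The letter counts of $vu$ and $w$ coincide, so $vu \in A_n$, and hence $vu \in D_n$.

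For uniqueness, suppose $w = \tilde u \tilde v$ also satisfies $\tilde v \tilde u \in D_n$. I would reverse the two-case analysis to show $\tilde u$ must itself be the shortest prefix of $w$ achieving the minimum of $\delta$, and therefore equal $u$. Applying the $D_n$-condition to strict prefixes of $\tilde v$ gives $\delta(\tilde v') \geq 0$, hence $\delta(\tilde u \tilde v') \geq \delta(\tilde u)$ for every prefix of $w$ of length at least $|\tilde u|$. Applying it to strict prefixes of the form $\tilde v \tilde u'$ with $\tilde u'$ a proper prefix of $\tilde u$ gives $\delta(\tilde u') \geq -\delta(\tilde v) = \delta(\tilde u) + 1$, so $\tilde u$ is strictly shorter than every other prefix with $\delta$-value $\leq \delta(\tilde u)$. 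Together these pin $\tilde u$ down as the shortest minimiser of $\delta$, forcing $\tilde u = u$ and $\tilde v = v$.

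The main obstacle is the integrality step in existence: converting the strict inequality $\delta(u') > m$, which holds \emph{precisely} because $u$ is the shortest (not merely some) minimiser, into the clean bound $\delta(u') \geq m+1$ needed to cancel $\delta(w) = -1$. This is also the point where the asymmetric Definition~\ref{def:Dn} — requiring $\delta \geq 0$ only on \emph{strict} prefixes while allowing $\delta(w) = -1$ — plays its role, and where the case where $u$ coincides with $w$ itself (i.e.\ $w$ already in $D_n$, so $v$ is empty and the type-(i) check is vacuous) must be handled by the same bookkeeping rather than separately.
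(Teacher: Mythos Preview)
The paper does not give its own proof of this lemma; it simply cites it as the classical Cyclic Lemma of Dvoretsky and Motzkin. Your argument is the standard one and is correct: characterise the cut point as the shortest prefix where $\delta$ is minimal, then verify the $D_n$ condition on the two types of strict prefixes of $vu$, using integrality to turn the strict inequality $\delta(u')>m$ into $\delta(u')\ge m+1$; uniqueness is the same computation run backwards.

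One small remark on the edge case you flag at the end: when $w$ is already in $D_n$, the two factorisations $(u,v)=(w,\varepsilon)$ and $(u,v)=(\varepsilon,w)$ both give $vu=w\in D_n$, so the literal ``unique factorisation'' claim is off by this trivial duplication. Your uniqueness argument, as written, shows that $\tilde u$ must be the shortest minimiser of $\delta$, which correctly rules out $\tilde u=\varepsilon$ (since $\delta(\varepsilon)=0>-1=m$) and forces $\tilde u=w$ in this case --- matching the ``moreover'' clause. So there is no gap; the ambiguity lies in the statement rather than in your proof.
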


We will often consider configurations $f$ satisfying  the following condition:
\begin{equation}
\label{eqn:prePark}
0  \leq f_1 \leq f_2 \leq \ldots f_{n-1} \leq n
\end{equation}

\begin{definition}
\label{def:phi1}
To any configuration satisfying condition (\ref{eqn:prePark})
 we associate  
 the word  $ \phi_1(f) $ of $A_n$
   such that the prefix of $\phi_1(f)$ ending with the $i$-th occurrence of the letter $a$ contains exactly $f_i$ occurrences of the letter $b$. 
  \end{definition}
  
  \begin{propo}
  \label{prop:conjugue}
  Let $f,g$ be two configurations of $K_n$  such that their first $n-1$ values satisfy condition  (\ref{eqn:prePark}).
  If  $deg(f) = deg(g),  \phi_1(f) =  uv,  \phi_1(g) = vu$ then:
  $$g \simil f'= (f_{p+1}, \cdots ,f_{n-1}, f_1, \cdots , f_p, f_n)$$
  where $p = |u|_a$.
  \end{propo}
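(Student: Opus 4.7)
The plan is to reduce the equivalence $g \simil f'$ to the characterization given in Corollary \ref{coro:equivinKn}: it suffices to check that $\deg(g)=\deg(f')$ and that $g_i-g_j \equiv f'_i-f'_j \pmod n$ for all $i,j$. The degree equality is immediate since $f'$ is obtained from $f$ by permuting the first $n-1$ entries and keeping $f_n$, hence $\deg(f')=\deg(f)=\deg(g)$. So the heart of the proof is to establish the congruence of differences, and this is where the structure of $\phi_1$ under cyclic rotation must be unpacked carefully.

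First I would decode the configurations from the words. By Definition \ref{def:phi1}, if we write $q=|u|_b$ (so $|v|_b=n-q$) and $p=|u|_a$, then for $i\le p$ the value $f_i$ equals the number of $b$'s preceding the $i$-th $a$ in $u$, while for $i>p$ the value $f_i$ equals $q$ plus the number of $b$'s preceding the $(i-p)$-th $a$ of $v$. Reading $\phi_1(g)=vu$ by the same rule: for $i\le n-1-p$ the value $g_i$ equals the number of $b$'s preceding the $i$-th $a$ of $v$, and for $i>n-1-p$ the value $g_i$ equals $(n-q)$ plus the number of $b$'s preceding the $(i-(n-1-p))$-th $a$ of $u$.

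Comparing these expressions entry by entry against $f'=(f_{p+1},\dots,f_{n-1},f_1,\dots,f_p,f_n)$, the two cases give respectively $f'_i-g_i = q$ and $f'_i-g_i = -(n-q) \equiv q \pmod n$, so $f'_i \equiv g_i + q \pmod n$ uniformly for $1 \le i \le n-1$. This already secures $g_i-g_j \equiv f'_i-f'_j \pmod n$ for $i,j<n$. The equality of degrees together with the relation $\sum_{i<n}(f'_i-g_i) \equiv (n-1)q \equiv -q \pmod n$ forces $f'_n-g_n \equiv q \pmod n$ as well, so the congruence extends to the sink coordinate. Corollary \ref{coro:equivinKn} then yields $g \simil f'$.

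The main obstacle is purely combinatorial bookkeeping: correctly tracking which $a$'s of $\phi_1(f)=uv$ and $\phi_1(g)=vu$ sit in $u$ versus $v$, and how the $b$-counts before them shift by $\pm q$ or $\pm(n-q)$ under the rotation. Once the indexing is set up, the congruences are immediate, and the hypothesis $\deg(f)=\deg(g)$ is used only to control the $n$-th coordinate via degree-preservation of $f \mapsto f'$.
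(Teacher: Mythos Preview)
Your proof is correct and follows essentially the same approach as the paper: both arguments decode the entries of $g$ and $f'$ from the word structure, observe that $g_i - f'_i$ equals $-q$ or $n-q$ (hence is constant modulo $n$) for $i<n$, and then invoke Corollary~\ref{coro:equivinKn} together with $\deg(f')=\deg(f)=\deg(g)$. The only difference is that you make the verification for the sink coordinate explicit via the degree constraint, whereas the paper leaves that step implicit.
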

  \begin{proof}
  Let $q$ be the number of occurrences of $b$ in $u$, and $p' = n-1-p$.
 Since $\phi_1(f) =uv$ and $\phi_1(g) = vu$ we have:
\[
g_i = \left\{ \begin{array}{ll}
f_{p+i} -q & \mbox{if $0< i < n-p$} \\
  f_{i-p'} +n-q& \mbox{if  $n-p \leq i <n$}
        \end{array}
        \right. 
        \]
 This shows that $ g_i - f'_{i} = -q$ if $i < n-p$ and $g_{i} -f'_i = n-q$, if $ i \geq n-p$ but all these are equal $\mod n$,
and  the result follows from Corollary \ref{coro:equivinKn}, since $deg(f') = deg(f)=deg(g).$
        
   \end{proof}   
   
   \begin{remark}  
   \label{rem:calculConjugue}
   Let $f,g,u,v,p,q$ be as in the Proposition above then:
   $$g_n=f_n + n(q-p)-q.$$
  \end{remark}
\begin{proof}
  The first $n-1$ values of  $f$ and $g$  are such that  $n-p-1$ values pf $f$  are decreased by $q$ and $p$ others
 are increased by $n-q$ giving:
$ \sum_{i=1}^{n-1} g_i = \sum_{i=1}^{n-1} f_i  -q(n-p-1) +(n-q)p,$
hence computing $deg(g)$ we have:
$$ deg(g) = g_n+ \sum_{i=1}^{n-1} g_i = g_n + \sum_{i=1}^{n-1} f_i  +n(p-q) +q$$
The result follows from $deg(g) = deg(f) = f_n +\sum_{i=1}^{n-1} f_i $.
\end{proof}

  We introduce two mappings $\phi$ and $\psi$, on configurations  of $K_n$ the first associates to any  configuration a word in $D_n$,
 and  the second one associates to it an integer, these two values will give many informations on the configuration.

  \begin{definition}
  \label{def:phi}
  Let  $f$ be any configuration on $K_n$, $g$ the parking configuration such that $f \simil g$  and $g'$ the
  configuration obtained from $g$ by reordering the $n-1$ first terms in weakly increasing order, then the 
  mapping $\phi$ and $\psi$ are defined by:
  
  $$ \phi(f) = \phi_1(g'), \hskip 1cm \psi(f) = g_n.$$
   \end{definition}
  Notice that from the definition of parking configurations, for any $i$ the $i$-th  occurrence of $a$ in $\phi(f)$
  is preceded by less than $i$ occurrences of $b$, showing that $\phi(f)$ is a word in $D_n$.
  Since there is a unique parking configuration in any class of $\simil$ we have that  $f \simil g$ implies $\phi(f) = \phi(g)$ and $\psi(f) = \psi(g)$.
   \begin{lemma}
   \label{lem:permutPhi}
  Let $f,g$ be two configurations such that the first $n-1$ entries of $g$ are obtained by permuting those of $f$,
  then $\phi(f)=\phi(g)$
  \end{lemma}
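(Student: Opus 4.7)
The plan is to exploit the symmetry of $K_n$ with respect to the distinguished vertex $x_n$. Any permutation $\pi$ of $\{1,\ldots,n-1\}$ acts on configurations by $(\pi\cdot f)_i = f_{\pi(i)}$ for $i<n$ and $(\pi\cdot f)_n = f_n$. Writing $g = \pi \cdot f$, I would show that every ingredient used to define $\phi$ is equivariant under this action, so that $\phi(f) = \phi(\pi\cdot f)$.

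The first step is to observe that the action preserves the toppling equivalence $\simil$: in $K_n$ the Laplacian configurations satisfy $\pi\cdot \Delta^{(i)} = \Delta^{(\pi(i))}$ for $i<n$ and $\pi\cdot \Delta^{(n)} = \Delta^{(n)}$, so $L_G$ is stable under $\pi$, and thus $f'\simil g'$ iff $\pi\cdot f' \simil \pi\cdot g'$. The second step is to check that being a parking configuration is also preserved: the condition in Definition \ref{def:parking} applied to $\pi\cdot f$ on a set $Y' \subseteq \{x_1,\ldots,x_{n-1}\}$ translates, via the substitution $Y = \pi(Y')$ of the same cardinality, into the corresponding condition for $f$ on $Y$. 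Combining these, if $h$ denotes the unique parking configuration with $f\simil h$ given by Corollary \ref{prop:caracParking}, then $\pi\cdot h$ is a parking configuration with $\pi\cdot f \simil \pi\cdot h$, and by uniqueness it is exactly the parking representative of $g = \pi\cdot f$.

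Finally, sorting the first $n-1$ entries into a weakly increasing sequence is by definition invariant under any permutation of those entries, so $h$ and $\pi\cdot h$ produce the same sorted configuration $h'$, and applying $\phi_1$ yields the same word of $D_n$. Hence $\phi(f) = \phi_1(h') = \phi(g)$. There is no genuine obstacle here beyond carefully keeping track of the direction of $\pi$ in the parking condition; the argument is essentially that the whole construction $\phi$ factors through the $S_{n-1}$-orbit on the sorted first $n-1$ entries of the parking representative.
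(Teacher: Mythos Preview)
Your proof is correct and follows essentially the same approach as the paper: both arguments show that the $S_{n-1}$-action on the first $n-1$ coordinates commutes with passing to the parking representative (because it permutes the $\Delta^{(i)}$ and preserves the parking condition), so the parking representatives of $f$ and $g$ differ only by a permutation of their first $n-1$ entries, and sorting then yields the same word. The paper's version is simply terser, writing the parking representative of $f$ as $f-\sum a_i\Delta^{(i)}$ and asserting directly that the permuted combination is the parking representative of $g$; your version spells out the equivariance more carefully (and your caveat about the direction of $\pi$ is apt, since with your convention one actually gets $\pi\cdot\Delta^{(i)}=\Delta^{(\pi^{-1}(i))}$, though this does not affect the stability of $L_G$).
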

  \begin{proof}
  Let $\alpha$ be the permutation such that $f_i = g_{\alpha(i)}$ for $1 \leq i \leq n-1$ and let 
  $f- \sum_{i=1}^{n-1} a_i \Delta^{(i)}$ be the parking configuration equivalent to $f$, then
  $g- \sum_{i=1}^{n-1} a_{\alpha(i )}\Delta^{(\alpha(i))}$ is the parking configuration equivalent to $g$, showing
  that these two parking configurations differ only by a permutation of their first $n-1$ entries. Hence 
  their sorted versions are equal ending the proof.
  
  \end{proof}
  
 \begin{propo}
 \label{prop:uniqConj}
 For any configuration $f$ satisfying condition  (\ref{eqn:prePark}), the word $\phi(f)$ is the unique conjugate 
 of $\phi_1(f)$ which is an element of $D_n$.
 \end{propo}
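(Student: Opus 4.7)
The plan is to identify $\phi(f)$ with the $D_n$-conjugate of $\phi_1(f)$ whose existence and uniqueness are guaranteed by the Cyclic Lemma (Lemma \ref{lem:cyclic}). That lemma immediately settles uniqueness, so the whole task is to show that $\phi(f)$ itself is a conjugate of $\phi_1(f)$ lying in $D_n$. To this end, write $\phi_1(f) = uv$ with $vu \in D_n$ and set $p = |u|_a$, $q = |u|_b$.

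Next I would introduce an auxiliary configuration $h$ by demanding $\phi_1(h) = vu$: this forces the first $n-1$ entries to be the weakly increasing sequence $h_i = $ (number of $b$'s preceding the $i$-th $a$ in $vu$), so $h$ satisfies condition (\ref{eqn:prePark}); I fix $h_n$ by imposing $\deg(h) = \deg(f)$. The key claim is that $h$ is in fact a sorted parking configuration. Indeed, since $\delta(vu)=-1$ combined with the $D_n$-condition forces $vu$ to end in $b$, the prefix of $vu$ ending just before the $i$-th $a$ is strict for every $i=1,\dots,n-1$, and applying the defining inequality of $D_n$ to this prefix yields $h_i \leq i-1$. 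By Proposition \ref{prop:parkingFunction} this is exactly the parking-function condition on the (already sorted) first $n-1$ entries of $h$.

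Now I apply Proposition \ref{prop:conjugue} with the pair $(f,h)$: the hypotheses $\phi_1(f)=uv$, $\phi_1(h)=vu$, $\deg(f)=\deg(h)$ are satisfied, hence
\[
h \simil f' = (f_{p+1},\ldots,f_{n-1},f_1,\ldots,f_p,f_n).
\]
The first $n-1$ entries of $f'$ are a permutation of those of $f$, so Lemma \ref{lem:permutPhi} gives $\phi(f)=\phi(f')$; and since $\phi$ is invariant under $\simil$ (as noted right after Definition \ref{def:phi}), $\phi(f')=\phi(h)$. Finally, because $h$ is itself a parking configuration with already sorted first $n-1$ entries, Definition \ref{def:phi} gives $\phi(h)=\phi_1(h)=vu$. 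Chaining these equalities yields $\phi(f)=vu$.

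The only subtle point is the strict inequality $h_i<i$ used to conclude that $h$ is a parking configuration: this is precisely where the $D_n$-structure (strict-prefix condition, not merely a prefix condition) is used, via the prefix ending \emph{just before} the $i$-th $a$ rather than at it. All remaining steps are an assembly of the cited Cyclic Lemma, Proposition \ref{prop:conjugue}, Lemma \ref{lem:permutPhi}, and the invariance of $\phi$ on toppling classes.
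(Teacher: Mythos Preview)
Your proposal is correct and follows essentially the same route as the paper's own proof: introduce the configuration $h$ (the paper calls it $g$) with $\phi_1(h)=vu\in D_n$ and $\deg(h)=\deg(f)$, observe that $h$ is a sorted parking configuration, then combine Proposition~\ref{prop:conjugue}, Lemma~\ref{lem:permutPhi}, and the $\simil$-invariance of $\phi$ to obtain $\phi(f)=\phi_1(h)=vu$. The paper dispatches the parking property of $h$ with a single ``Clearly''; your explicit verification via the strict-prefix inequality $h_i\le i-1$ is a welcome expansion of that point.
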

  
 \begin{proof}
 Let $\phi_1(f)=uv$ be such that $vu$ is an element of $D_n$, and let $g$ be the configuration such that
 $deg(g) = deg(f)$ and $\phi_1(g) = vu$. Clearly $g$ is a parking configuration and is by Proposition \ref{prop:conjugue}
 such that a permutation
 of it is $\simil$ equivalent to $f$. By the preceding Lemma we have that $\phi(f) = \phi(g)= \phi_1(g)$.
 \end{proof}

\subsubsection*{A linear time algorithm to compute the parking configuration equivalent to a given configuration}
The preceding proof gives  also an algorithm finding the recurrent configuration of any configuration,
 this algorithm performs $O(n)$ arithmetic operations on integers.
Starting from any configuration $f$, find the equivalent configuration $f'$ which first $n-1$ values are 
non negative and  not greater than $n$, as described at the end of section~\ref{subsec:algo1}. 

Then  sort the first $n-1$ first entries of $f'$ 
(this can be done in linear time, since the sorted values are between $0$ and $n-1$) to obtain $f"$. 
 Let  $w =\phi_1(f")$ and determine  the decomposition $w=uv$ by identifying in linear time the shortest prefix $u$ of $w$  such that $\delta(u)$ is minimal.
Determine $q = |u|_b$ then  deduce the parking configuration $g$  by :

\[
g_i = \left\{ \begin{array}{ll}
f'_{i} -q & \mbox{if $f'_i \geq q$} \\
  f'_{i} +n-q& \mbox{if $f'_i < q$}
        \end{array}
        \right. 
        \]
        
   \medskip

We extract from the end of this algorithm and the proof of Proposition~\ref{prop:uniqConj} a corollary on the 
sorted parking configuration toppling equivalent to a configuration $f$.

We will say that two configurations $f$ and $g$ of $K_n$ are toppling equivalent after permutation,   if there exists a permutation $\sigma\in S_{n-1}$ such that $f'=\sigma(f) =(f_{\sigma(1)},\cdots, \,f_{\sigma(n-1)},f_n)$ is such that  $f'\simil g$.

\begin{coro}
\label{cor:calcParkingPerm}
Let $w$ be any word of $A_n$, and 
let $w=uv$  be the factorisation of $w$ given by the cyclic lemma (Lemma~\ref{lem:cyclic}).
Then two configurations $f$ and $g$ satisfying equation (\ref{eqn:prePark})  and the equalities:
$$\phi_1(f) = uv, \hskip 0.5cm \phi_1(g)= vu ,  \hskip 0.5cmdeg(f)=deg(g)$$
 are such 
that  $g$ is a parking configuration which is toppling equivalent  after permutation to the configuration $f$.
\end{coro}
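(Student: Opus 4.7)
The plan is to deduce both halves of the statement directly from results already proved in the section, so essentially nothing new has to be invented; the work is only to unpack the definitions and apply the right proposition.

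First I would establish that $g$ is a parking configuration. By the cyclic lemma (Lemma~\ref{lem:cyclic}), the factorisation $w = uv$ is precisely the one that makes $vu = \phi_1(g)$ an element of $D_n$. Unpacking Definition~\ref{def:phi1} together with Definition~\ref{def:Dn}: the prefix of $\phi_1(g)$ ending at the $i$-th occurrence of $a$ contains exactly $g_i$ occurrences of $b$, and just before this $a$ we have a strict prefix of $vu$, hence its $\delta$ is non-negative, which gives $(i-1) - g_i \geq 0$, i.e.\ $g_i \leq i-1 < i$. Since by hypothesis $(g_1, \ldots, g_{n-1})$ is weakly increasing, Proposition~\ref{prop:parkingFunction} applies and $g$ is a parking configuration of $K_n$.

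For the second half I would apply Proposition~\ref{prop:conjugue} verbatim: its hypotheses $\phi_1(f) = uv$, $\phi_1(g) = vu$, and $deg(f) = deg(g)$ are exactly those assumed in the corollary. The proposition then yields
\[
g \simil f' = (f_{p+1}, \ldots, f_{n-1}, f_1, \ldots, f_p, f_n)
\]
with $p = |u|_a$, and $f'$ is obtained from $f$ by the cyclic permutation $\sigma$ that sends $i$ to $i+p \pmod{n-1}$ on the first $n-1$ coordinates, fixing $f_n$. By the definition of ``toppling equivalent after permutation'' stated immediately before the corollary, this is precisely the claim that $g$ is toppling equivalent after permutation to $f$.

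The only mildly delicate point, and the place I would write out carefully, is the translation between the Dyck-prefix condition on $\phi_1(g) = vu$ and the parking inequality $g_i < i$; everything else is a direct quotation of Proposition~\ref{prop:conjugue} and Proposition~\ref{prop:parkingFunction}. No genuine obstacle is anticipated, since the cyclic lemma guarantees that the factorisation $w = uv$ produced here is the unique one for which $vu \in D_n$, matching Proposition~\ref{prop:uniqConj}.
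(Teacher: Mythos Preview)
Your proof is correct and follows the same approach the paper intends: the corollary is stated without a standalone proof, but the paper indicates it is extracted from the algorithm and the proof of Proposition~\ref{prop:uniqConj}, which is precisely your combination of the cyclic lemma, the Dyck-prefix inequality yielding $g_i < i$ (hence parking via Proposition~\ref{prop:parkingFunction}), and Proposition~\ref{prop:conjugue} for the toppling equivalence up to the cyclic permutation of the first $n-1$ entries.
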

   
\section{Effective configurations}
We come back in this section with general graphs $G$ not necessarily equal to a complete graph, define the 
notion of $L_G$-effective configuration and recall the main results of \cite{bakerNorine},  the proofs we give in this section are more 
or less a reformulation in our terms  of the proofs of them given in \cite{bakerNorine}.
The game described in the introduction can be translated 
in determining if a configuration is $L_G$-effective with the following definition of effectiveness:
\begin{definition}
\label{def:effective}
A configuration $f$ is effective if 
$f_i \geq 0 $ for all $i$.
A configuration $f$  is $L_G$-effective if there exists an effective 
configuration $g$ toppling equivalent to $f$ (recall that this means $f -g \in L_G$). 
\end{definition}

Since two   equivalent configurations by $\simil$  have the same degree,
it is clear that a configuration with negative degree
is not $L_G$-effective. However we will prove that  configurations
with positive degree are not necessarily $L_G$-effective as these two 
examples show:

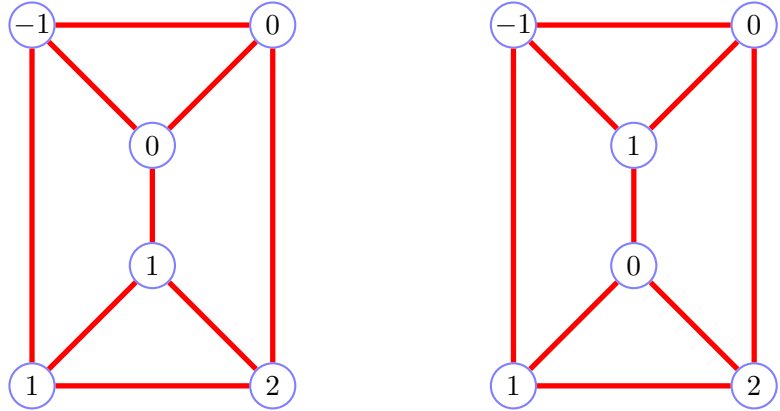
\begin{figure}[H]
\begin{center}
\begin{tikzpicture}[scale =0.8]

\tikzstyle{sommet}=[circle,draw=blue!50,thick,
                   inner sep=0pt,minimum size=6mm]
\foreach \x/\y/\name/\value in {0/0/q0/1,2/2/q1/1,4/0/q2/2,2/4/q3/0,0/6/q4/{\footnotesize -1},4/6/q5/0}{
\node[sommet] (\name) at (\x,\y) {\small $\value$};
}
\foreach \source/\dest in {q0/q1,q2/q0,q2/q1,q3/q1,q4/q0,q4/q3,q4/q5,q5/q3,q5/q2}{
\draw[draw=red,line width=2] (\source) to (\dest);
}

\foreach \x/\y/\name/\value in {0/0/q0/1,2/2/q1/0,4/0/q2/2,2/4/q3/1,0/6/q4/-1,4/6/q5/0}{
\node[sommet] (\name) at (8 +\x,\y) {\small $\value$};
}
\foreach \source/\dest in {q0/q1,q2/q0,q2/q1,q3/q1,q4/q0,q4/q3,q4/q5,q5/q3,q5/q2}{
\draw[draw=red,line width=2] (\source) to (\dest);
}
\end{tikzpicture}

\caption{An $L_G$-effective configuration and a non $L_G$-effective one}
\label{fig:effectConfig}

\end{center}
\end{figure} 

\subsection{Configuration associated to an acyclic orientation of $G$}

As already seen in Section \ref{sec:ParkingConf} an orientation of $G$ is a directed graph obtained from $G$ by orienting
each edge, that is distinguishing for each edge with end points $x_i$ and
 $x_j$ which one is the head and the other being the tail.
 The orientation is acyclic if
there is no directed circuit.  Let $\overrightarrow{G}$ be an acyclic orientation of $G$, we define
the configuration $f_{\overrightarrow{G}}$ by: 
$$ (f_{\overrightarrow{G}})_i = d_i^{-} -1$$
Where $ d_i^{-}$ is the number of edges  which have  head  $x_i$.
 The configuration represented in the right of 
  Figure \ref{fig:effectConfig} is equal to $f_{\overrightarrow{G}}$ for the orientation 
of $G$ represented  in  Figure \ref{fig:acycOrient}.

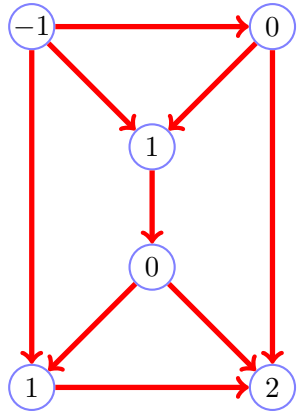
\begin{figure}[H]

\begin{center}
\begin{tikzpicture}[scale =0.8]

\tikzstyle{sommet}=[circle,draw=blue!50,thick,
                   inner sep=0pt,minimum size=6mm]
\foreach \x/\y/\name/\value in {0/0/q0/1,
2/2/q1/0,
4/0/q2/2,
2/4/q3/1,
0/6/q4/-1,
4/6/q5/0}{
\node[sommet] (\name) at (\x,\y) {\small $\value$};
}
\foreach \source/\dest in {q0/q2,q1/q0,q1/q2,q3/q1,q4/q0,q4/q3,q4/q5,q5/q3,q5/q2}{
\draw[draw=red,->,line width=2] (\source) to (\dest);
}
\end{tikzpicture}

\caption{An orientation of $G$ and the corresponding  configuration}
\label{fig:acycOrient}
\end{center}
\end{figure}

\begin{propo}
\label{propo:acyclicNonEffective}
The configuration associated to an acyclic orientation of $G$ is 
not $L_G$-effective.
\end{propo}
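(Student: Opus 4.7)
The plan is to argue by contradiction, exploiting the fact that any acyclic digraph has a source (a vertex of in-degree $0$), and that we may localize such a source inside a well-chosen subset of vertices.

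First I would observe that $f_{\overrightarrow{G}}$ itself is not effective: any source $x_s$ of $\overrightarrow{G}$ satisfies $d_s^- = 0$, so $(f_{\overrightarrow{G}})_s = -1$. So the task is to show that no configuration $g$ toppling equivalent to $f_{\overrightarrow{G}}$ is effective. Suppose for contradiction that such a $g \geq 0$ exists, and write $g = f_{\overrightarrow{G}} + \sum_{i=1}^n a_i \Delta^{(i)}$ with $a_i \in \mathbb Z$. Using the relation $\sum_i \Delta^{(i)} = 0$, I shift the coefficients so that $a_i \geq 0$ for all $i$ and $\min_i a_i = 0$. Let $S = \{x_i : a_i = 0\}$; it is nonempty by normalization, and cannot be all of $X$ because then $g = f_{\overrightarrow G}$, which is not effective.

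Now comes the key step: I pick $x_k$ to be a source of the induced directed subgraph $\overrightarrow{G}[S]$, which exists because an induced subgraph of an acyclic orientation is itself acyclic. Because $x_k$ is a source of $\overrightarrow{G}[S]$, every edge of $\overrightarrow{G}$ with head $x_k$ comes from a vertex outside $S$. Denoting by $e^{\rightarrow k}_{j,k}$ the number of edges from $x_j$ oriented toward $x_k$, this yields
\[
d_k^- = \sum_{x_j \notin S} e^{\rightarrow k}_{j,k} \;\leq\; \sum_{x_j \notin S} e_{j,k}.
\]
On the other hand, since $a_k = 0$ and $a_j \geq 1$ for all $x_j \notin S$,
\[
g_k = (d_k^- - 1) + a_k d_k - \sum_{j \ne k} a_j e_{j,k}
     \;\leq\; d_k^- - 1 - \sum_{x_j \notin S} e_{j,k}
     \;\leq\; -1,
\]
contradicting $g_k \geq 0$.

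The main obstacle is the \emph{choice} of the vertex at which to detect the obstruction. The obvious candidate, a source of the full orientation $\overrightarrow{G}$, does not suffice once $g \neq f_{\overrightarrow{G}}$, because the topplings at the other vertices may inject positive tokens at that source. The trick is to restrict attention to the subset $S$ where no toppling has been performed (the zero-coefficient indices): inside $S$, the Laplacian contributions are purely negative, and picking a source of $\overrightarrow{G}[S]$ guarantees that the indegree that supplies the tokens for $x_k$ is entirely drawn from $\overline{S}$, precisely the vertices where the coefficients $a_j$ are at least $1$. These two bounds then combine to force $g_k \leq -1$.
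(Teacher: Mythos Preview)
Your proof is correct and follows essentially the same strategy as the paper's: both locate a vertex $x_k$ among those indices where the coefficient $a_i$ is minimal, chosen so that no incoming edge of $\overrightarrow{G}$ at $x_k$ comes from another minimal-coefficient vertex, and then verify that the resulting $g_k$ is negative. Your normalization via $\sum_i \Delta^{(i)}=0$ (forcing $\min_i a_i = 0$) and your phrasing in terms of a source of the induced subdigraph $\overrightarrow{G}[S]$ are cosmetic differences; the paper instead argues directly with the minimum of the $a_i$ and splits into the cases of a unique minimum versus several minima, but the underlying mechanism is identical.
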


\begin{proof}
We will show that for any linear combination $g = \sum_{i=1}^n a_i \Del{i}$
the sum $h$ of $g$ and $f_{\overrightarrow{G}}$ is not an effective   configuration.
Let $\varepsilon_{i,j}$ denote the number of edges with head $x_j$ and tail
$x_i$. Then $e_{i,j} = \varepsilon_{i,j} + \varepsilon_{j,i}$ (but notice
that since the orientation is acyclic, at least one of the two values in the sum above is 
equal to 0).

For any vertex $x_i$ of $G$ we have $d_i^{-} = \sum_{j=1}^n \varepsilon_{j,i}$
so  that: 
$$ \eta_i = -1 + \sum_{j=1}^n \varepsilon_{j,i} + a_id_i - \sum_{j=1}^na_je_{i,j}$$
Using $d_i = \sum_{j=1}^n e_{j,i}$ and decomposing each $e_{i,j}$ into
$\varepsilon_{i,j} + \varepsilon_{j,i}$ gives;

\begin{equation}
\eta_i = -1 + \sum_{j=1}^n \varepsilon_{j,i} + a_i\sum_{j=1}^n (\varepsilon_{i,j} + \varepsilon_{j,i})  - \sum_{j=1}^na_j(\varepsilon_{i,j} + \varepsilon_{j,i})
\end{equation}

Giving:

\begin{equation}
\eta_i = -1 + \sum_{j=1}^n (1+a_i-a_j)\varepsilon_{j,i} 
+ \sum_{j=1}^n (a_i-a_j) \varepsilon_{i,j} 
\end{equation}

If there is a unique minimal value, say $a_k$ 
 among the $a_i$, that is $a_k < a_i$ for all $i \neq k$ then since the 
$a_i$ are integers $1 + a_k -a_j \leq 0 $ and  $\eta_k < 0$.

If there are many $a_i$'s attaining the minimal value take $k$ be such that
$a_k$ be among them  and $\varepsilon_{j,k} = 0$ for all the other
minima $j$, the existence of such a $k$ follows from the acyclicity of
$\overrightarrow{G}$. Then for this $k$ we have $\eta_k < 0$.

\end{proof}

\subsection{Characterisation of $L_G$-effective configurations}

The following Theorem is the central result in \cite{bakerNorine}.

\begin{theorem}
\label{th:caracEffective}
A configuration $f$ is $L_G$-effective if and only if the parking 
configuration $g$ equivalent to $f$ is such that $g_n \geq 0$.
Moreover for any configuration $f$ one and only one of the following 
assertions is satisfied:

(1) $f$ is $L_G$-effective

(2) There exists an acyclic orientation $\overrightarrow{G}$ such that
$f_{\overrightarrow{G}} - f$ is $L_G$-effective.

\end{theorem}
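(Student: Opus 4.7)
The two statements are most naturally proved together. First, the easy half of the equivalence: since $g$ is a parking configuration it is a sandpile configuration, so $g_i \geq 0$ for $i<n$; if moreover $g_n \geq 0$ then $g$ itself is effective, and $f \simil g$ shows that $f$ is $L_G$-effective. Second, the mutual exclusivity of assertions (1) and (2): if both held there would exist effective configurations $h_1, h_2$ with $f \simil h_1$ and $f_{\overrightarrow{G}} - f \simil h_2$, whence $f_{\overrightarrow{G}} \simil h_1 + h_2$ would be $L_G$-effective, directly contradicting Proposition \ref{propo:acyclicNonEffective}. This second point is what will later let me convert the existence of (2) into a failure of (1).

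The heart of the proof is to show that when $g_n < 0$, some acyclic orientation $\overrightarrow{G}$ witnesses (2). Applying Proposition \ref{prop:parkingToAcyclic} to the parking configuration $g$ yields an acyclic orientation satisfying $g_i < d_i^-$ for every $i \neq n$. Setting $h = f_{\overrightarrow{G}} - g$, I verify coordinate by coordinate that $h$ is effective: for $i < n$, $h_i = d_i^- - 1 - g_i \geq 0$ by the defining inequality of Proposition \ref{prop:parkingToAcyclic}; and for $i = n$, $h_n = d_n^- - 1 - g_n \geq -1 - g_n \geq 0$ since $g_n \leq -1$ and $d_n^- \geq 0$. Because $f \simil g$ implies $f_{\overrightarrow{G}} - f \simil h$, this establishes that $f_{\overrightarrow{G}} - f$ is $L_G$-effective, i.e.\ (2) holds.

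The remaining direction of the equivalence now follows by contraposition: if $g_n < 0$, the previous paragraph delivers (2), and then mutual exclusivity forces (1) to fail, so $f$ is not $L_G$-effective. The only step that requires any genuine content is the construction of the compatible acyclic orientation, and that work has already been carried out in Proposition \ref{prop:parkingToAcyclic}; the remaining verification is forced by matching the parking inequality to the in-degrees appearing in $f_{\overrightarrow{G}}$, together with the observation that $g_n \leq -1$ is exactly what is needed to absorb the $-1$ appearing in the $n$-th coordinate of $f_{\overrightarrow{G}}$.
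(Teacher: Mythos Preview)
Your proof is correct and follows essentially the same approach as the paper: both use Proposition~\ref{prop:parkingToAcyclic} to produce an acyclic orientation with $g_i < d_i^-$ for $i\neq n$, verify coordinatewise that $f_{\overrightarrow{G}}-g$ is effective (using $g_n\leq -1$ for the $n$-th coordinate), and invoke Proposition~\ref{propo:acyclicNonEffective} for mutual exclusivity. The only difference is ordering---you establish mutual exclusivity first and then derive non-effectiveness by contraposition, whereas the paper argues non-effectiveness directly in its first paragraph and states mutual exclusivity last---but the content is identical.
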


\begin{proof}
Let $f$ be a configuration and let $g$ be the parking configuration
in its class. If $g_n \geq 0$ then $f$  is $L_G$-effective since it is equivalent to $g$ 
which is effective.  If $g_n < 0$ 
then the acyclic orientation  $\overrightarrow{G}$ of $G$ given by Propostion
 \ref{prop:parkingToAcyclic}
is such
that the indegree $d_i^-$  of each vertex $x_i$ except $x_n$
 in $\overrightarrow{G}$
satisfies $g_i < d_i^-$, since $(f_{\overrightarrow{G}})_n \geq -1$ we have
$g \leq f_{\overrightarrow{G}}$ proving that $g$ is not $L_G$-effective, so is $f$ since  $f \simil g$.

 Let $f$ be non $L_G$-effective,  consider the parking configuration $g$ equivalent to $f$
 and the acyclic orientation given by Proposition  \ref{prop:parkingToAcyclic}, let
$h= f_{\overrightarrow{G}} -g$. Then for $i \neq n$ we have
$$ \eta_i =  d_i^- - 1 - f_i \geq 0$$
and since $g_n <0$:
$$ \eta_n = -1 + g_n \geq 0.$$
Hence  since $f$ and $g$ are  in the same class, so are 
$f_{\overrightarrow{G}} - $ and $f_{\overrightarrow{G}} - v$ showing that
$f_{\overrightarrow{G}} - u$ is $L_G$-effective.

\medskip
Notice that 
$f$ and $f_{\overrightarrow{G}} - f$ cannot be both $L_G$-effective since
their sum $f_{\overrightarrow{G}}$ would be too, contradicting 
Proposition \ref{propo:acyclicNonEffective}.

\end{proof}

\begin{coro}
\label{cor:effectiveIfLargeDegree}

Any configuration $f$ with degree greater than $m-n$ is $L_G$-effective.

\end{coro}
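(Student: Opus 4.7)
The plan is to deduce the corollary directly from the dichotomy in Theorem~\ref{th:caracEffective}, by computing the degree of the ``obstruction'' configuration $f_{\overrightarrow{G}}$ associated to any acyclic orientation.

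First I would compute $\deg(f_{\overrightarrow{G}})$ for an arbitrary acyclic orientation $\overrightarrow{G}$ of $G$. By definition $(f_{\overrightarrow{G}})_i = d_i^- - 1$, so
\[
\deg(f_{\overrightarrow{G}}) \;=\; \sum_{i=1}^n (d_i^- - 1) \;=\; \Bigl(\sum_{i=1}^n d_i^-\Bigr) - n \;=\; m - n,
\]
since every edge contributes exactly $1$ to the total indegree of $\overrightarrow{G}$.

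Next I would argue by contradiction. Suppose $f$ is a configuration with $\deg(f) > m-n$ that is not $L_G$-effective. By Theorem~\ref{th:caracEffective}, there exists an acyclic orientation $\overrightarrow{G}$ such that $f_{\overrightarrow{G}} - f$ is $L_G$-effective, i.e.\ toppling equivalent to some effective configuration $h$. Since degree is invariant under $\simil$ (each $\Del{i}$ has degree $0$) and any effective configuration has non-negative degree, we get
\[
0 \;\leq\; \deg(h) \;=\; \deg(f_{\overrightarrow{G}} - f) \;=\; (m-n) - \deg(f),
\]
so $\deg(f) \leq m-n$, contradicting the hypothesis. Hence $f$ must be $L_G$-effective.

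There is no real obstacle here: the only step that requires any thought is identifying the correct sum-of-indegrees formula, and the rest is a one-line invocation of the structural dichotomy provided by Theorem~\ref{th:caracEffective} together with the invariance of degree under $\simil$.
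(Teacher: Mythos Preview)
Your proof is correct and follows essentially the same approach as the paper: argue by contradiction via the dichotomy of Theorem~\ref{th:caracEffective}, noting that $\deg(f_{\overrightarrow{G}}-f)=(m-n)-\deg(f)<0$ contradicts $L_G$-effectiveness. You simply spell out the computation $\deg(f_{\overrightarrow{G}})=\sum_i(d_i^--1)=m-n$ more explicitly than the paper does.
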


\begin{proof}
If $f$ such that $deg(f) > m-n $ is not $L_G$-effective, by the above 
theorem there exists an acyclic orientation $\overrightarrow{G}$ of $G$ 
such that $f_{\overrightarrow{G}} - f$ is.
But the degree of this configuration is negative, giving a contradiction.
\end{proof}

\begin{propo}
\label{polTutteNbEffective}
Let $T_G(x,y)$ be the Tutte polynomial of the graph $G$, 
and let $t_i$ be the integer coefficients given by:  $$T_G(1,y) = 
\sum_{i=0}^{m-n+1}  t_iy^i$$ 
Then the number of non equivalent $L_G$-effective configurations of degree $d$
is given by:
$$ \sum_{k = m-n+1-d}^{m-n+1} t_k$$
\end{propo}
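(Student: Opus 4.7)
My plan is to reduce the count to the graded enumeration of parking configurations by the sum of their first $n-1$ entries, and then identify this generating function with $T_G(1,y)$ via the classical Merino--Biggs theorem.

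First, I would apply Theorem~\ref{th:caracEffective}: a toppling equivalence class is $L_G$-effective iff its unique parking representative $g$ (Corollary~\ref{prop:caracParking}) satisfies $g_n \geq 0$. Writing $\sigma(g) = \sum_{i<n} g_i$ and using $g_n = \deg(g) - \sigma(g)$, the number $N(d)$ of $L_G$-effective equivalence classes of degree $d$ equals the number of parking configurations $g$ of degree $d$ with $\sigma(g) \leq d$. Since the entry $g_n = d - s$ is forced by $d$ and $s$, this yields
$$N(d) = \sum_{s=0}^{d} P(s),$$
where $P(s)$ denotes the number of sequences $(g_1,\ldots,g_{n-1}) \in \mathbb{Z}_{\geq 0}^{n-1}$ satisfying the parking condition of Definition~\ref{def:parking} with $\sigma = s$.

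Next I would identify $P(s)$ with a Tutte coefficient. The involution $\beta$ of Proposition~\ref{prop:recToParking} gives a bijection between such parking sequences and recurrent configurations $r$, and since $\sum_{i<n}(d_i-1) = 2m-d_n-n+1$ one has $\sigma(r) + \sigma(\beta(r)) = 2m-d_n-n+1$. The Merino--Biggs theorem (see~\cite{biggs1}) asserts
$$T_G(1,y) = \sum_{r \text{ recurrent}} y^{\mathrm{level}(r)}, \qquad \mathrm{level}(r) := \sigma(r)-m+d_n,$$
so under $\beta$ a recurrent of level $\ell$ corresponds to a parking sequence of sum $(m-n+1)-\ell$, giving $P(s) = t_{m-n+1-s}$.

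Substituting and reindexing with $k = m-n+1-s$ yields
$$N(d) = \sum_{s=0}^{d} t_{m-n+1-s} = \sum_{k=m-n+1-d}^{m-n+1} t_k,$$
the claimed formula. As consistency checks: for $d > m-n$ the sum collapses to $T_G(1,1) = \tau(G)$, the order of the Picard group, in agreement with Corollary~\ref{cor:effectiveIfLargeDegree}; and for $d<0$ the sum is empty, as it should be. The main obstacle is the Merino--Biggs identity, which I would cite rather than reprove: its standard proofs (via deletion--contraction on $T_G$, or via the external-activity bijection between recurrents and spanning trees) are substantial but orthogonal to the Baker--Norine framework developed here.
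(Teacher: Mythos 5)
Your proposal is correct and follows essentially the same route as the paper: both characterize effective classes via the parking representative having $g_n\geq 0$ (Theorem~\ref{th:caracEffective}), transfer parking sequences of fixed sum to recurrent configurations through the complementation $\beta$ of Proposition~\ref{prop:recToParking}, and invoke Merino's theorem that the level enumerator of recurrents is $T_G(1,y)$, then reindex. The only difference is cosmetic (you cite the level theorem as Merino--Biggs via~\cite{biggs1}, the paper via~\cite{merinoLopez} and~\cite{coriLeBorgne1}).
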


\begin{proof}

In \cite{merinoLopez} the level of a recurrent configuration $f$ 
was defined as
$$ level(f) \  \   = \   \  \sum_{i=0}^{n-1} f_i - m +d_n$$

where $d_n$ is the degree of the vertex $x_n$.

It was proved that this level varies from $0$ to $m-n+1$ and
that the number of recurrent 
configurations of level $p$  and such that $x_n = q$ 
does not depend on $q$ and is equal to the coefficent  $t_p$
of $y^p$ in the evaluation of the Tutte polynomial $T_G(x,y)$  of $G$ for $x=1$.
A bijective proof of this result was given in \cite{coriLeBorgne1}.

Using the bijection $\beta$ defined in Proposition \ref{prop:recToParking}
we have that the number of parking configurations $g$ such that
$\sum_{i=1}^{n-1} g_i = j$ and a given value for $g_n$
 is equal to the number of recurrent
configurations $f$ such that:

$$\sum_{i=1}^{n-1} f_i = \sum_{i=1}^{n-1} (d_i - 1 - g_i)  =  \   \  2m - d_n - (n-1) -j$$
and $f_n = d_n-1 - g_n$, which 
is the number of recurrent configurations of level $k=m-n+1-j$ and a given value of $f_n$.
This number is equal to $t_k$.

In order that the configuration  $g$ of degree $d$ to be $L_G$-effective we must have $g_n \geq 0$
so that $k$ must be greater or equal to 0 and not greater than $m-n+1$,
thus ending the proof.

\end{proof}

The generating function for non-equivalent $L_G$-effective configurations according to the degree counted by the variable $y$ is $\frac{y^{m-n+1}}{1-y}T_G(1,y^{-1})$.

\section{The rank of configurations}

From now on it will be convenient to denote effective configurations 
using greek letters  $\lambda, \mu$ and configurations with no particular
assumptions on them by letters $f, g, h$.

\subsection{Definition of the rank}

\begin{definition}
\label{def:rank}
The rank $\rho(f)$ of a configuration $f$ is the integer equal to:
\begin{itemize}
\item  $-1$, if $f$ is non $L_G$-effective,

\item  or, if $f$ is  $L_G$-effective, the largest integer $r$
such that for any effective 
configuration $\lambda$ of degree $r$ the configuration $f-\lambda$ is $L_G$-effective.

\end{itemize}
\end{definition}

Denoting   {$  \mathbb{P}$} the set of effective  configurations  and 
  {$ \mathbb{E}$} the set of $L_G$-effective  configurations this definition
 can be given by the following compact formula which is valid in both 
cases:
$$ \rho(f) +1 = \min_{\displaystyle \lambda \in \mathbb{P},  f- \lambda\notin \mathbb{E} }\ \ \  deg(\lambda)$$

In other words let $f$ be a configuration of rank $\rho(f)$ and 
 {$\lambda $} be  an effective  configuration such that 
 $deg(\lambda) \leq \rho(f)$ then  {$f-\lambda$} is  $L_G$-effective; moreover    there exists
an effective configuration $\mu$ of degree $\rho(f) + 1$ such that $f-\mu$ is 
not $L_G$-effective.

An immediate consequence of this definition is that if 
$deg(f) < 0 $  or if $f =f_{\overrightarrow{G}}$
 for an acyclic orientation $\overrightarrow{G}$ then the rank of $f$ is $-1$.
 Moreover if  two configurations $f$ and $g$ are such that
$f_i \leq g_i$ for all $i$ then $\rho(f) \leq \rho(g)$.

\begin{definition}
\label{def:proofRank}
An  effective configuration $\mu$ is a proof for the rank $\rho(f) $ of an $L_G$-effective 
configuration $f$ if $f-\mu$ is not $L_G$-effective and $f-\lambda$ is $L_G$-effective for any
effective configuration $\lambda$ such that $deg(\lambda) < deg(\mu)$.
\end{definition}

Notice that if $\lambda$ is a proof for $\rho(f)$ then $\rho(f) =deg(\lambda) - 1
= deg(\lambda) + \rho(f-\lambda)$.

\begin{propo}
\label{propo:largeDegreeRank}
A configuration  $f$ of degree greater than $2m-2n$ has rank
$$ r= deg(f) - m +n -1$$

\end{propo}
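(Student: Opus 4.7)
The plan is to prove both inequalities $\rho(f)\geq r$ and $\rho(f)\leq r$ for $r=\deg(f)-m+n-1$, using Corollary~\ref{cor:effectiveIfLargeDegree} in both directions. The hypothesis $\deg(f)>2m-2n$ will appear naturally as the exact threshold that allows this corollary to be applied to two different configurations.

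\textbf{Lower bound $\rho(f)\geq r$.} By the compact min-formula for the rank, it suffices to verify that every effective configuration $\lambda$ with $\deg(\lambda)\leq r$ has $f-\lambda$ in $\mathbb{E}$. This is immediate: $\deg(f-\lambda)\geq \deg(f)-r = m-n+1>m-n$, so Corollary~\ref{cor:effectiveIfLargeDegree} gives that $f-\lambda$ is $L_G$-effective.

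\textbf{Upper bound $\rho(f)\leq r$.} Here I need to exhibit an effective configuration $\mu$ of degree $r+1=\deg(f)-m+n$ such that $f-\mu$ is not $L_G$-effective. The key idea is to subtract off the configuration attached to an acyclic orientation. Pick any acyclic orientation $\overrightarrow{G}$ of $G$ (one exists, e.g.\ by taking any total order on the vertices). Then $f_{\overrightarrow{G}}$ has degree $\sum_i(d_i^{-}-1)=m-n$ and is not $L_G$-effective by Proposition~\ref{propo:acyclicNonEffective}. Consider the configuration $f-f_{\overrightarrow{G}}$; its degree equals $\deg(f)-(m-n)$, which exceeds $m-n$ precisely thanks to the hypothesis $\deg(f)>2m-2n$. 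So Corollary~\ref{cor:effectiveIfLargeDegree} applies, yielding an effective configuration $\mu$ with $\mu\simil f-f_{\overrightarrow{G}}$. Consequently $f-\mu\simil f_{\overrightarrow{G}}$, and since $L_G$-effectiveness is a class invariant, $f-\mu$ is not $L_G$-effective. Because $\deg(\mu)=\deg(f)-m+n=r+1$, the min-formula gives $\rho(f)+1\leq r+1$.

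\textbf{Main obstacle.} There is no significant obstacle; the only real content is the choice of $\mu$ in the upper bound argument. The key observation is to recognize that $\deg(f)>2m-2n$ is exactly the bound that lets Corollary~\ref{cor:effectiveIfLargeDegree} be applied to $f-f_{\overrightarrow{G}}$: the threshold $m-n$ for effectiveness appears once on the side of $f_{\overrightarrow{G}}$ (its degree is $m-n$) and once on the side of the difference (whose degree must exceed $m-n$), giving the total $2m-2n$. Combining the two inequalities yields $\rho(f)=r$.
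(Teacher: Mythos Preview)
Your proof is correct and follows essentially the same approach as the paper's own proof: both directions use Corollary~\ref{cor:effectiveIfLargeDegree}, and the upper bound is obtained by choosing an arbitrary acyclic orientation $\overrightarrow{G}$, observing that $f-f_{\overrightarrow{G}}$ has degree exceeding $m-n$ (thanks to the hypothesis $\deg(f)>2m-2n$), and taking $\mu$ to be an effective representative of its class so that $f-\mu\simil f_{\overrightarrow{G}}$ is not $L_G$-effective.
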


\begin{proof}
We first show that for any effective configuration $\lambda$ such that $deg(\lambda) =r$, the configuration 
$f-\lambda$ is $L_G$-effective. This follows from $deg(f - \lambda) = deg(f) - r = m-n+1$
by Corollary \ref{cor:effectiveIfLargeDegree}.

\medskip
We now build a effective configuration $\lambda$ of degree $r+1$ such 
that $f-\lambda$ is not $L_G$-effective. Consider any acyclic orientation 
$\overrightarrow{G}$ of $G$ and let $g = f- f_{\overrightarrow{G}}$
then $g$ is $L_G$-effective since its degree is equal to  $deg(f) - m +n$ 
hence greater than $m-n$. Let $\lambda$ be the effective configuration
such that $ g \simil \lambda$, then $f-\lambda$ is such that 
$$f_{\overrightarrow{G}} \simil f-g \simil f - \lambda$$ 
so that $f-\lambda$ is not $L_G$-effective by Proposition \ref{propo:acyclicNonEffective}.
And the Proposition results from:
 $$deg(\lambda) = deg(g) = deg(f) - deg(f_{\overrightarrow{G}}) = deg(f) -m +n = r+1$$

\end{proof}

\subsection{Riemann-Roch like  theorem for graphs}
We give here a proof of the following theorem first proved in \cite{bakerNorine} which we estimate shorter and simpler
than the original  one.

\begin{theorem}
\label{th:RR}
 Let   {$\kappa$} be the configuration such that  {$\kappa_i = d_i -2$} for all $1 \leq i \leq n$,
 so that   {$deg(\kappa)  = 2(m-n)$}.  Any configuration   {$f$} satisfies:

     $$ \rho(f) - \rho(\kappa-f) \ \  = \ \ deg(f) + n - m$$

\end{theorem}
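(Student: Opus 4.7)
My plan is to use Theorem~\ref{th:caracEffective} to rewrite $\rho(f)+1$ as a minimum over pairs of effective configurations constrained by an acyclic orientation, and then exploit the orientation-reversal involution to transmute the $f \mapsto \kappa - f$ symmetry into the intrinsic symmetry of the optimization.

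First I reformulate. By the compact formula right after Definition~\ref{def:rank},
$$\rho(f)+1 \ = \ \min\{\deg(\lambda) : \lambda \in \mathbb{P},\ f-\lambda \notin \mathbb{E}\}$$
(the set is non-empty: any $\lambda \in \mathbb{P}$ with $\deg(\lambda) > \deg(f)$ gives $\deg(f-\lambda) < 0$, hence $f-\lambda \notin \mathbb{E}$; and when $f$ itself is not in $\mathbb{E}$, $\lambda = 0$ attains the value $0$). By Theorem~\ref{th:caracEffective}, the condition $f-\lambda \notin \mathbb{E}$ is equivalent to the existence of an acyclic orientation $\overrightarrow{G}$ and some $\mu \in \mathbb{P}$ with $\mu \simil f_{\overrightarrow{G}} - (f - \lambda)$, giving
$$\rho(f)+1 \ = \ \min\bigl\{\deg(\lambda) : \lambda,\mu \in \mathbb{P},\ \exists\,\overrightarrow{G}\text{ acyclic},\ \mu - \lambda \simil f_{\overrightarrow{G}} - f\bigr\}.$$

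The key symmetry is that for any orientation $\overrightarrow{G}$ with reverse $\overleftarrow{G}$, we have $f_{\overrightarrow{G}} + f_{\overleftarrow{G}} = \kappa$ coordinate-wise, since $(d_i^- - 1)+(d_i^+ - 1) = d_i - 2 = \kappa_i$; moreover reversal is an involution on the set of acyclic orientations. Applying the reformulation to $\kappa - f$ and re-indexing the orientation via reversal,
$$\rho(\kappa-f)+1 \ = \ \min\bigl\{\deg(\lambda') : \lambda',\mu' \in \mathbb{P},\ \exists\,\overrightarrow{G}\text{ acyclic},\ \mu' - \lambda' \simil f - f_{\overrightarrow{G}}\bigr\}.$$

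Now pick a triple $(\lambda, \mu, \overrightarrow{G})$ attaining the minimum for $\rho(f)+1$, and set $\lambda' := \mu,\ \mu' := \lambda$ with the same $\overrightarrow{G}$. Then $\mu' - \lambda' = \lambda - \mu \simil f - f_{\overrightarrow{G}}$, so $(\lambda', \mu', \overrightarrow{G})$ is admissible for the $\rho(\kappa - f)+1$ optimization. The degree identity $\deg(\mu) = \deg(\lambda) + (m-n) - \deg(f)$ (using $\deg f_{\overrightarrow{G}} = m - n$) then yields
$$\rho(\kappa-f)+1 \ \leq\ \deg(\lambda') \ =\ \deg(\mu) \ =\ \rho(f) + 1 + (m-n) - \deg(f),$$
that is, $\rho(f) - \rho(\kappa-f) \geq \deg(f) + n - m$. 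Applying this same inequality with $f$ replaced by $\kappa - f$ (using $\deg(\kappa - f) = 2(m-n) - \deg(f)$) produces the reverse inequality, and the two together give the theorem.

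The only substantive step is recognizing the identity $\kappa = f_{\overrightarrow{G}} + f_{\overleftarrow{G}}$ together with the fact that reversal is a bijection on acyclic orientations; this is what makes the involution $f \mapsto \kappa - f$ act internally on the minimization problem. The remainder is degree bookkeeping on top of Theorem~\ref{th:caracEffective}.
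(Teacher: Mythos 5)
Your proof is correct and takes essentially the same route as the paper: the same two ingredients (the dichotomy of Theorem~\ref{th:caracEffective} and the identity $f_{\overrightarrow{G}} + f_{\overleftarrow{G}} = \kappa$ with reversal an involution on acyclic orientations), one inequality extracted from a witness attaining $\rho(f)+1$, and the reverse inequality by exchanging $f$ and $\kappa-f$. Your packaging of the rank as a symmetric minimization over triples $(\lambda,\mu,\overrightarrow{G})$ with the swap $(\lambda,\mu)\mapsto(\mu,\lambda)$ is just a rephrasing of the paper's argument, which works with a proof $\lambda$ for the rank and derives the relation $(\kappa-f)-\mu \simil f_{\overleftarrow{G}} - \lambda$ explicitly before doing the same degree bookkeeping.
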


\begin{proof}
The main ingredient for the proof is to use Theorem \ref{th:caracEffective} and 
remark that for any acyclic orientation $\overrightarrow{G}$  the orientation $\overleftarrow{G} $ of $G$ obtained from $\overrightarrow{G}$
by reversing the orientations  of all the edges  is such that: $f_{\overrightarrow{G}} + f_{\overleftarrow{G}}
=\kappa$.

\medskip

Let $f$ be any configuration we first give an upper bound for $ \rho(\kappa-f)$, we define  $\lambda$ to 
be a proof for the rank of $f$ if $f$ is $L_G$-effective,
and to be equal to $0$  if $f$ is not $L_G$-effective. So that $\rho(f) = deg(\lambda) -1$ in both cases.

Since  $f-\lambda$  is not $L_G$-effective, we have  by Theorem \ref{th:caracEffective} that there exists 
an acyclic orientation  $\overrightarrow{G}$ of $G$ such that 
$f_{\overrightarrow{G}} - (f-\lambda)$ is $L_G$-effective, hence  equivalent to an  effective configuration
$\mu$. This may be written as:

\begin{equation}
\label{eq:proofRR1}
f_{\overrightarrow{G}} - (f-\lambda)  \simil \mu
\end{equation}

Now consider the orientation $\overleftarrow{G} $ of $G$ obtained from $\overrightarrow{G}$ 
by reversing the orientations  of all the arrows, clearly $f_{\overrightarrow{G}} + f_{\overleftarrow{G}}
=\kappa$. Hence adding $f_{\overleftarrow{G}}$ to both sides of (\ref{eq:proofRR1}) we have:
\begin{equation}
\label{eq:proofRR2}
\kappa- (f - \lambda)  \simil \mu + f_{\overleftarrow{G}}
\end{equation}
which may be written as:

$$ (\kappa - f) - \mu \simil f_{\overleftarrow{G}} -\lambda$$
Giving that $\kappa-f - \mu$ is not $L_G$-effective since the reverse of an acyclic orientation is also acyclic.
Hence by the definition of the rank we have
\begin{equation}
\label{eq:proofRR3}
\rho(\kappa-f) < deg(\mu)
\end{equation}
The degree of $\mu$ is obtained from (\ref{eq:proofRR1}) giving:
$$ deg(\mu) = deg(f_{\overrightarrow{G}}) - deg(f) + deg(\lambda) = m-n -deg(f) + \rho(f) +1$$
and:
\begin{equation}
\label{eq:proofRR4}
\rho(\kappa-f) <  m-n -deg(f) + \rho(f) +1
\end{equation}

\medskip

Now to obtain a lower bound for $\rho(\kappa-f)$ we exchange the roles of $f$ and $\kappa-f$ giving:

\begin{equation}
\label{eq:proofRR5}
\rho(f) <  m-n -deg(\kappa-u) + \rho(\kappa-u) +1
\end{equation}
Since $deg(\kappa-u) = 2(m-n) - deg(f)$, inequality (\ref{eq:proofRR5}) may be written as:

\begin{equation}
\label{eq:proofRR6}
\rho(f) + m-n -deg(f)  -1  <  \rho(\kappa-f) 
\end{equation}
Comparing inequalities (\ref{eq:proofRR4}) and (\ref{eq:proofRR6}), and noticing that the
rank is an integer gives
$$\rho(f) + m-n -deg(f)  =  \rho(\kappa-f) $$
hence proving the Theorem.
\end{proof}

\section{On the rank of configurations in the complete graph}

We are  interested here in an algorithm allowing to compute the rank 
of configurations on the complete graph $K_n$. 

\subsection{Some useful remarks on the rank}
We begin by some simple facts satisfied by the rank on any graph $G$.

\begin{lemma}
\label{lemma:augmRangDeF}
Let $f$ be a configuration on a graph $G$ and $\mu$ be an  effective configuration then:
$$\rho(f)  \leq \rho(f +\mu) \leq \rho(f) + deg(\mu)$$
\end{lemma}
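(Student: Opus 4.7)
The two inequalities will be proved separately, both relying on the compact formula
$$\rho(g)+1 \;=\; \min_{\lambda\in\mathbb{P},\; g-\lambda\notin\mathbb{E}}\;\deg(\lambda),$$
with the convention that when $g$ itself is not $L_G$-effective the minimum is attained at $\lambda=0$, recovering $\rho(g)=-1$.

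For the lower bound $\rho(f)\le\rho(f+\mu)$, the plan is to show that every effective configuration $\lambda$ of degree at most $\rho(f)$ is also a ``witness'' that $(f+\mu)-\lambda$ is $L_G$-effective. If $\rho(f)=-1$ the statement is trivial, so assume $\rho(f)\ge 0$. Take any effective $\lambda$ with $\deg(\lambda)\le\rho(f)$; by definition $f-\lambda$ is $L_G$-effective, hence $f-\lambda\simil\nu$ for some effective $\nu$. Then
$$(f+\mu)-\lambda \;\simil\; \nu+\mu,$$
and $\nu+\mu$ is effective because it is a sum of two effective configurations. Thus $(f+\mu)-\lambda$ is $L_G$-effective, and since this holds for all such $\lambda$, we conclude $\rho(f+\mu)\ge\rho(f)$.

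For the upper bound $\rho(f+\mu)\le\rho(f)+\deg(\mu)$, the strategy is to transform an ``obstruction'' for $f$ into one for $f+\mu$. Using the compact formula above, pick an effective $\lambda_0$ with $\deg(\lambda_0)=\rho(f)+1$ such that $f-\lambda_0\notin\mathbb{E}$ (take $\lambda_0=0$ if $\rho(f)=-1$). Set $\lambda_1:=\lambda_0+\mu$. Then $\lambda_1$ is effective, $\deg(\lambda_1)=\rho(f)+1+\deg(\mu)$, and
$$(f+\mu)-\lambda_1 \;=\; f-\lambda_0 \;\notin\; \mathbb{E}.$$
Applying the compact formula to $f+\mu$ gives $\rho(f+\mu)+1\le\deg(\lambda_1)=\rho(f)+1+\deg(\mu)$, which is the desired inequality.

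There is no real obstacle here: both parts follow directly from the definition of $L_G$-effectiveness being closed under addition of effective configurations, and from the compact minimization formula. The only minor point to take care of is the uniform treatment of the case $\rho(f)=-1$ (respectively $\rho(f+\mu)=-1$), which is absorbed cleanly by allowing $\lambda_0=0$ in the second inequality and by the triviality of the first inequality in that regime.
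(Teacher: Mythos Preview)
Your proof is correct and follows essentially the same approach as the paper's: the lower bound is argued by noting that adding an effective configuration preserves $L_G$-effectiveness of $f-\lambda$, and the upper bound is obtained by taking a proof $\lambda_0$ for $\rho(f)$ and observing that $\lambda_0+\mu$ witnesses that $(f+\mu)-(\lambda_0+\mu)=f-\lambda_0$ is not $L_G$-effective. Your version is slightly more explicit (spelling out the equivalence to an effective $\nu$ and handling the $\rho(f)=-1$ case), but the ideas are identical.
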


\begin{proof}
It is clear from the definition of the rank that increasing the values of  the components  of  a configuration cannot
decrease the value of the rank, this proves the first part of the inequality.  For the second part,
let $\lambda$ be a proof for the rank of  $f$, then $f+ \mu - (\mu +\lambda) = f- \lambda$ is not $L_G$-effective, so that
the rank of $f+ \mu$ is strictly less  than $deg(\lambda + \mu)$ but $deg(\lambda) = \rho(f) +1$, 
giving:
$$ \rho(f + \mu)  < \rho(f) + 1 + deg(\mu)$$
which is  the expected result.
\end{proof}

\begin{coro}
\label{coro:augmRangDeUn}
Let $f$ be a configuration on a graph $G$ and $f'$ be the configuration obtained by adding
1 to one of the components of $f$; so that for one $j,  \  f'_j = f_j +1$ and for all $i \neq j,  f'_i = f_i$. Then:
$$\rho(f) \leq \rho(f') \leq \rho(f) +1$$
\end{coro}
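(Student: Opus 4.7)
The plan is to observe that this corollary is an immediate specialization of Lemma \ref{lemma:augmRangDeF} to the case where the added effective configuration has degree one. Specifically, I would set $\mu = \eps^{(j)}$, the configuration with value $1$ at vertex $x_j$ and $0$ elsewhere. This configuration is effective since all its components are nonnegative, and by definition $\deg(\eps^{(j)}) = 1$. Moreover, the configuration $f + \eps^{(j)}$ coincides entry-by-entry with $f'$ as described in the statement.

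Applying Lemma \ref{lemma:augmRangDeF} to $f$ and $\mu = \eps^{(j)}$ then yields
$$\rho(f) \leq \rho(f + \eps^{(j)}) \leq \rho(f) + \deg(\eps^{(j)}) = \rho(f) + 1,$$
which is exactly the claimed double inequality once we identify $f + \eps^{(j)}$ with $f'$.

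Since the corollary is a direct substitution into a lemma proved just above, there is no genuine obstacle. The only thing worth remarking on is why the case $\deg(\mu)=1$ deserves its own statement: it is the unit-step monotonicity one will use inductively when analyzing how the rank evolves as a single token is added to a single vertex, which is the natural building block for the algorithmic study of the rank on $K_n$ that follows in later sections.
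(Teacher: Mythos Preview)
Your proof is correct and follows exactly the same approach as the paper: both apply Lemma~\ref{lemma:augmRangDeF} with $\mu = \eps^{(j)}$, noting that this configuration is effective with $\deg(\eps^{(j)}) = 1$.
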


\begin{proof}
It suffices to  apply  Lemma \ref{lemma:augmRangDeF}
to the effective configuration $\mu$ such that $\mu_j=1$ and $\mu_i= 0$
for $j \neq i$.

\end{proof}

\begin{lemma}
\label{lemma:rangDiminFP}
Let $f$ be a configuration on a graph $G$ and $\mu$ be an  effective configuration 
such that :
$$\rho(f - \mu)  = \rho(f) - deg(\mu)$$
then for each effective configuration $\mu'$ such that for all $j, \mu'_j \leq \mu_j$
we have:
$$\rho(f - \mu')  = \rho(f) - deg(\mu')$$
\end{lemma}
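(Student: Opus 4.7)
The plan is to apply Lemma \ref{lemma:augmRangDeF} twice, once to sandwich $\rho(f-\mu')$ between useful bounds that involve $\rho(f-\mu)$, and once to bound it relative to $\rho(f)$. Since $\mu'$ is componentwise dominated by $\mu$ and both are effective, the difference $\mu'' = \mu - \mu'$ is again an effective configuration, and $deg(\mu) = deg(\mu') + deg(\mu'')$.

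First I would apply Lemma \ref{lemma:augmRangDeF} to the configuration $f - \mu$ with the effective configuration $\mu''$. Since $(f - \mu) + \mu'' = f - \mu'$, this yields
$$\rho(f - \mu) \leq \rho(f - \mu') \leq \rho(f - \mu) + deg(\mu'').$$
Substituting the hypothesis $\rho(f - \mu) = \rho(f) - deg(\mu)$ and rewriting $-deg(\mu) + deg(\mu'') = -deg(\mu')$ gives the upper bound $\rho(f - \mu') \leq \rho(f) - deg(\mu')$.

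For the matching lower bound, I would apply Lemma \ref{lemma:augmRangDeF} a second time, now to the configuration $f - \mu'$ with the effective configuration $\mu'$. Since $(f - \mu') + \mu' = f$, the lemma gives $\rho(f) \leq \rho(f - \mu') + deg(\mu')$, i.e.\ $\rho(f - \mu') \geq \rho(f) - deg(\mu')$. Combining this with the upper bound established above yields the desired equality $\rho(f - \mu') = \rho(f) - deg(\mu')$.

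The proof is essentially a two-line computation once one chooses the correct pair of applications of Lemma \ref{lemma:augmRangDeF}; there is no real obstacle. The only subtle point to keep in mind is that the lower bound obtained from the first application, namely $\rho(f-\mu') \geq \rho(f) - deg(\mu)$, is too weak, so the second application of the lemma (with $\mu'$ rather than $\mu''$) is essential to close the gap.
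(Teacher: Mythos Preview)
Your proof is correct and follows essentially the same route as the paper: two applications of Lemma~\ref{lemma:augmRangDeF}, one with $\mu''=\mu-\mu'$ to obtain the upper bound $\rho(f-\mu')\leq\rho(f)-deg(\mu')$, and one with $\mu'$ to obtain the matching lower bound. The only difference is the order in which you derive the two inequalities.
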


\begin{proof}
Since  for all $j, \mu'_j \leq \mu_j$ we can write $\mu = \mu' +\mu''$, where $\mu''$ is 
an  effective configuration.  By Lemma  \ref{lemma:augmRangDeF} we have:
\begin{equation}
\label{eq:degRank1}
\rho(f) \leq \rho(f - \mu') + deg(\mu')
\end{equation}
Applying again the same Lemma and since 
 $f -\mu' = f-\mu +\mu''$ we get:
 $$ \rho(f-\mu')  \leq \rho(f-\mu) +deg(\mu'')$$
 But it is  assumed that $ \rho(f-\mu) = \rho(f) - deg(\mu)$ giving:
 
\begin{equation}
\label{eq:degRank2}
\rho(f-\mu')  \leq \rho(f) - deg(\mu) +deg(\mu'') = \rho(f) - deg(\mu') 
\end{equation}
Comparing equations (\ref{eq:degRank1}) and (\ref{eq:degRank2}) ends the proof of the Lemma.
\end{proof}

\subsection{Main fact on the rank in $K_n$}
\begin{propo}
\label{propo:caculRangMoinsUn}
Let $f$ be a parking and   $L_G$-effective configuration on the complete graph
such that for some $i, f_i = 0$  and let  $\eps^{(i)}$ be the
configuration such that  $\eps^{(i)}_i = 1$ 
and $\eps^{(i)}_j=0 $ for $j \neq i$ then:
$$ \rho(f) = \rho(f - \eps^{(i)} )+1$$

\end{propo}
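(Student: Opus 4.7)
The plan is to reduce the proof to exhibiting a proof $\mu$ of $\rho(f)$ with $\mu_i \ge 1$. By Corollary~\ref{coro:augmRangDeUn} one already has $\rho(f - \varepsilon^{(i)}) \le \rho(f) \le \rho(f - \varepsilon^{(i)}) + 1$, so it suffices to rule out $\rho(f) = \rho(f - \varepsilon^{(i)})$. If such a $\mu$ can be found, then $f - \mu$ is not $L_G$-effective and so $\rho(f - \mu) = -1 = \rho(f) - \deg\mu$; Lemma~\ref{lemma:rangDiminFP} applied with $\mu' := \varepsilon^{(i)} \le \mu$ then yields $\rho(f - \varepsilon^{(i)}) = \rho(f) - \deg\varepsilon^{(i)} = \rho(f) - 1$, which is the claim.

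I would distinguish two cases. In the easy case where $f - \varepsilon^{(i)}$ is not $L_G$-effective, we have $\rho(f - \varepsilon^{(i)}) = -1$; combined with $\rho(f) \ge 0$ (since $f$ is $L_G$-effective by hypothesis) and $\rho(f) \le \rho(f - \varepsilon^{(i)}) + 1 = 0$, this forces $\rho(f) = 0 = \rho(f - \varepsilon^{(i)}) + 1$ directly, and indeed $\varepsilon^{(i)}$ is itself a proof of $\rho(f)$ with $\mu_i = 1$.

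In the hard case where $f - \varepsilon^{(i)}$ is $L_G$-effective (so $\rho(f-\varepsilon^{(i)}) \ge 0$), I would start from any proof $\mu^*$ of $\rho(f)$ and, if $\mu^*_i = 0$, modify it by the chip-swap $\mu := \mu^* + \varepsilon^{(i)} - \varepsilon^{(j)}$, where $j \ne i$ is chosen in the support of $\mu^*$ (which is non-empty since $\deg\mu^* = \rho(f) + 1 \ge 1$). Then $\mu$ is effective, has the same degree as $\mu^*$, and satisfies $\mu_i = 1$; the remaining task is to verify that $f - \mu$ is still not $L_G$-effective.

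For this last verification I would apply the explicit cyclic-lemma algorithm given after Corollary~\ref{cor:calcParkingPerm} to both $f - \mu^*$ and $f - \mu$, i.e.\ perform the three phases of reducing modulo $n$ via Corollary~\ref{coro:equivinKn}, sorting the first $n-1$ coordinates, and applying the cyclic factorization of Lemma~\ref{lem:cyclic}, and then compare the two sink values of the parking representatives via the formula of Remark~\ref{rem:calculConjugue}. The hypothesis that $f$ is parking with $f_i = 0$ forces the sorted word $\phi_1$ of $f$ to have a prescribed initial segment at the position corresponding to $i$; this pins down the shortest prefix $u$ in the cyclic factorization common to both computations and allows one to show that the sink of the parking representative of $f - \mu$ is obtained from that of $f - \mu^*$ by a controlled shift which preserves negativity. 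The main obstacle is precisely this combinatorial verification: the swap changes the toppling class (the pairwise residues modulo $n$ between coordinates $i$ and $j$ are shifted by $2$), and the parking structure of $f$ at the zero coordinate $i$ must compensate for this shift. If the swap for a single $j$ fails, one would iterate over the support of $\mu^*$ or choose $j$ so as to minimise a suitable potential on the parking representative.
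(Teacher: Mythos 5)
Your reduction step is sound and is in fact the same closing move the paper makes: once you have a proof $\mu$ of $\rho(f)$ with $\mu_i\geq 1$, then $\rho(f-\mu)=-1=\rho(f)-deg(\mu)$ and Lemma~\ref{lemma:rangDiminFP} with $\mu'=\eps^{(i)}$ gives $\rho(f-\eps^{(i)})=\rho(f)-1$; the easy case where $f-\eps^{(i)}$ is not $L_G$-effective is also handled correctly. The genuine gap is the construction of such a $\mu$. You propose to take an arbitrary proof $\mu^*$ with $\mu^*_i=0$ and replace it by the single-chip swap $\mu=\mu^*+\eps^{(i)}-\eps^{(j)}$ for some $j$ in the support of $\mu^*$, but you never establish that $f-\mu$ remains non-$L_G$-effective; you explicitly defer this to an unexecuted cyclic-lemma computation and even concede it may fail for a given $j$, with only a vague fallback. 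This verification is not a technicality, it is the entire content of the proposition: for instance when $\rho(f)=0$ every proof is $\mu^*=\eps^{(j)}$ for some $j\neq i$, your swap produces $\mu=\eps^{(i)}$, and the statement to be verified is precisely that $f-\eps^{(i)}$ is not $L_G$-effective, i.e.\ the proposition itself. The swap changes the toppling class (it is not induced by any automorphism of $K_n$ unless $f_j=0$), so nothing forces the sink of the parking representative to stay negative, and no argument is given.

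The paper gets around exactly this obstacle with a different device: take a proof $\lambda$ of $\rho(f)$ and choose $j$ with $f_j<\lambda_j$ (possible since $f-\lambda$ is not effective). Then the two configurations $g=f-\lambda_j\eps^{(j)}$ and $h=f-f_j\eps^{(j)}-(\lambda_j-f_j)\eps^{(i)}$ are, because $f_i=0$, literally obtained from one another by transposing coordinates $i$ and $j$, hence have equal rank by the $\Sn{n}$-symmetry of $K_n$ --- no effectiveness verification is needed. One application of Lemma~\ref{lemma:rangDiminFP} (with $\mu'=\lambda_j\eps^{(j)}$) pins down $\rho(g)=\rho(f)-\lambda_j$, hence $\rho(h)=\rho(f)-\lambda_j$, and a second application (with $\mu=f_j\eps^{(j)}+(\lambda_j-f_j)\eps^{(i)}$, $\mu'=\eps^{(i)}$, using $\lambda_j-f_j\geq 1$) yields the claim. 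If you want to salvage your plan, you should replace the unproven one-chip swap by this bulk transfer of $\lambda_j-f_j$ chips, which is exactly what makes the symmetry argument available.
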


\begin{proof}
Let $\lambda$ be a proof for $\rho(f)$, since $f-\lambda$ is not $L_G$-effective
there exists at least one $j$ such that $f_j < \lambda_j$.
Consider the two configurations:
$$ g = f - \lambda_j\eps^{(j)} \mbox{ and } h = f - f_j\eps^{(j) }-(\lambda_j - f_j) \eps^{(i)}$$
these two configurations have the same components but in different
order (since $g_j = \eta_i = f_j - \lambda_j$ and $g_i= \eta_j =0$), hence by the symmetry
of $K_n$ they have the same rank. Giving:
$$ \rho(g) = \rho(h) $$
Since $\lambda$ is a proof for $\rho(f)$ we have
$\rho(f) = deg(\lambda) - 1 = deg(\lambda) + \rho(f - \lambda)$.
Hence applying Lemma \ref{lemma:rangDiminFP}  with $\mu = \lambda$ and 
$\mu' = \lambda_j\eps^{(j)}$ we obtain:

$$ \rho(f-\mu')= \rho(f) -deg(\mu') = \rho(f)-\lambda_j $$
Hence since $g=f-\mu' $, we  have also:

$$\rho(h) = \rho(g) =   \rho(f) - \lambda_j $$

Since $\lambda_j - f_j \geq 1$, we can apply again Lemma \ref{lemma:rangDiminFP} , this time 
with $\mu = f_j\eps^{(j)}+(\lambda_j - f_j) \eps^{(i)}$ and $\mu' = \eps^{(i)} $, 
giving:

$$ \rho( f -\eps^{(i)})= \rho(f) - 1$$
\end{proof}
This result does not hold for any graph, the subtraction of $1$ on the $i$-th coordinate of configuration $f$ with $f_i =0$ may leave
the rank invariant as shows the following example.
\begin{remark}
The configuration $f= (0,1,0,1,0,1)$ on the wheel graph $W_5$ given in the left of the Figure \ref{fig:wheelGraph}  has rank 0, as has
the configuration $f'= f - (0,0,1,0,0,0)$.

\end{remark}
\begin{proof} Indeed the configuration $f$  is $L_G$-effective, we first show that it has rank 0. Indeed, notice that $g = f - (0,0,0,0,1,0)$ is not $L_G$-effective  since  the acyclic orientation
given on the right part of the Figure \ref{fig:wheelGraph} gives a configuration $\eta$ such that $\eta_i \geq g_i$ for all $i$.
On the other hand the configuration $f'=(0,1,-1,1,0,1)$ is toppling equivalent to $(1,2,0,2,1,-4)$ via a toppling of $x_6$ and then to $(0,0,2,0,0,0)$ via topplings of $x_1$,$x_2$, $x_4$ and $x_5$,
hence it is $L_G$-effective and has also rank equal to 0.
\end{proof}
\tikzstyle{sommet}=[circle,draw=blue!50,thick,
                   inner sep=0pt,minimum size=6mm]

\begin{figure}
\begin{tabular}{cc}

\begin{tikzpicture}
        \node (cc) at (0,0)  [sommet] [label = below:$x_6$]{1};       
         
        \node (x1) at (-2.853,0.927)[label = left:$x_1$] [sommet]{0};
        \node (x2) at (-1.764,-2.427)[label = left:$x_2$]  [sommet]{1};
        \node (x4) at (2.853,0.927) [label = right:$x_4$][sommet]{1};
        \node (x3) at (1.764,-2.427) [label = right:$x_3$][sommet]{0};
         \node (x5) at (0,3)  [label = above:$x_5$] [sommet]{0};
         \foreach \from/\to in {x1/cc,x2/cc,x3/cc,x4/cc,x5/cc,x1/x2,x2/x3,x3/x4,x4/x5,x1/x5}
         \draw [-] [very thick](\from) -- (\to);
\end{tikzpicture}

&

\begin{tikzpicture}
        \node (cc) at (0,0)  [sommet] [label = below:$x_6$]{1};       
         
        \node (x1) at (-2.853,0.927)[label = left:$x_1$] [sommet]{0};
        \node (x2) at (-1.764,-2.427)[label = left:$x_2$]  [sommet]{1};
        \node (x4) at ( 2.853,0.927) [label = right:$x_4$][sommet]{1};
        \node (x3) at (1.764, -2.427) [label = right:$x_3$][sommet]{2};
         \node (x5) at (0,3)  [label = above:$x_5$] [sommet]{-1};
         
         \foreach \from/\to in {cc/x1,x2/cc,x3/cc,x4/cc,cc/x5,x2/x1,x3/x2,x3/x4,x4/x5,x1/x5}
         \draw [<-] [very thick](\from) -- (\to);
\end{tikzpicture}
\end{tabular}
\caption{Two configurations on the Wheel Graph with ranks 0 and -1}
\label{fig:wheelGraph}
\end{figure}
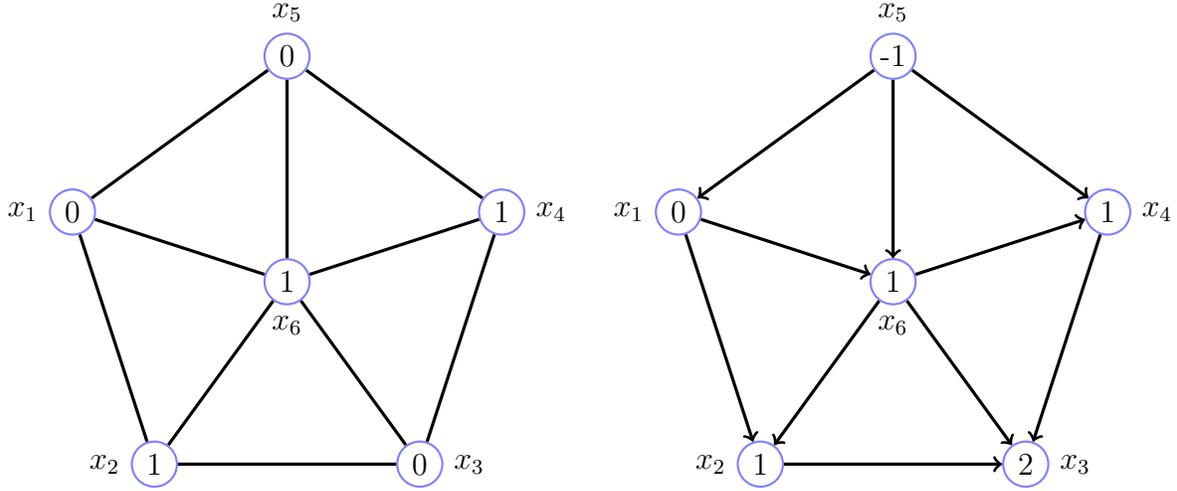

\begin{propo}
\label{propo:caculRangPourLaMax}
Let $f$ be a configuration on the complete graph such that there exists a permutation $\alpha \in \Sn{n}$
satisfying $f_{\alpha(i)} = i-1$ for $i = 1, 2, \ldots, n-1$ then :
\[
\rho(f)  = \left\{ \begin{array}{ll}
                        -1 & \mbox{if $f_{\alpha(n)} < 0$} \\
                         f_{\alpha(n) }&  \mbox{otherwise}
                                       \end{array}
                        \right. \]

\end{propo}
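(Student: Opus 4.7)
The plan is to use the symmetry group $\Sn{n}$ of $K_n$ to reduce to the sorted canonical configuration $M_k:=(0,1,2,\ldots,n-2,k)$, and then to treat the two cases on $k$ separately: the case $k<0$ directly from Theorem~\ref{th:caracEffective}, and the case $k\geq 0$ by induction on $k$ using Proposition~\ref{propo:caculRangMoinsUn}.

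Every permutation in $\Sn{n}$ is an automorphism of $K_n$, and the rank is invariant under graph automorphisms because the action $(\sigma\cdot f)_i=f_{\sigma^{-1}(i)}$ preserves both the Laplacian lattice $L_G$ and the cone of effective configurations. Choosing $\sigma=\alpha^{-1}$ yields $(\sigma\cdot f)_i=f_{\alpha(i)}=i-1$ for $i<n$ and $(\sigma\cdot f)_n=f_{\alpha(n)}$, so $\rho(f)=\rho(M_k)$ with $k:=f_{\alpha(n)}$. By Proposition~\ref{prop:parkingFunction} the sorted first $n-1$ coordinates of $M_k$ form the extremal parking function (the inequalities $g_i=i-1<i$ all hold), so $M_k$ is itself a parking configuration. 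When $k<0$ its last coordinate is negative, and Theorem~\ref{th:caracEffective} yields $\rho(M_k)=-1$; this establishes the first case of the claim and supplies the base value $\rho(M_{-1})=-1$ used below.

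For $k\geq 0$, $M_k$ is parking, $L_G$-effective (being effective), and satisfies $(M_k)_1=0$, so Proposition~\ref{propo:caculRangMoinsUn} applies and gives $\rho(M_k)=\rho(M_k-\eps^{(1)})+1$. The heart of the argument, and the main obstacle, is the identity $M_k-\eps^{(1)}\simil M_{k-1}$: once established, induction on $k\geq 0$ starting from $\rho(M_{-1})=-1$ concludes $\rho(M_k)=k$. I verify the identity by running the linear-time algorithm of Section~\ref{sec:dyck}. Starting from $M_k-\eps^{(1)}=(-1,1,2,\ldots,n-2,k)$, modular normalization (Corollary~\ref{coro:equivinKn}) produces $(0,2,3,\ldots,n-1,k-n+1)$; its sorted first $n-1$ entries $(0,2,3,\ldots,n-1)$ yield the word $\phi_1=abba(ba)^{n-3}b$, whose shortest prefix of minimum $\delta$ value is $u=abb$. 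The cyclic lemma then gives the conjugate factorization with $v=a(ba)^{n-3}b$, $p=|u|_a=1$, $q=|u|_b=2$, and reading off the parking representative from $vu$ together with the last-coordinate update in Remark~\ref{rem:calculConjugue} identifies it as $(0,1,\ldots,n-2,k-1)=M_{k-1}$, as required.
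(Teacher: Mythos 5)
Your proof is correct, but it follows a genuinely different route from the paper. After the common reduction by the $\Sn{n}$-symmetry to the staircase $M_k=(0,1,\ldots,n-2,k)$ and the common treatment of $k<0$ via Theorem~\ref{th:caracEffective}, the paper handles $k\geq 0$ by a squeeze argument: it anchors the rank at the two ends ($\rho(M_0)=0$ checked directly, and $\rho(M_k)=k$ for $k\geq m-n$ from the large-degree formula of Proposition~\ref{propo:largeDegreeRank}) and interpolates using Corollary~\ref{coro:augmRangDeUn}, which says the rank increases by at most one per unit added to the sink; it never needs Proposition~\ref{propo:caculRangMoinsUn} nor any Dyck-word computation. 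You instead induct on $k$ through Proposition~\ref{propo:caculRangMoinsUn} and verify by the cyclic-lemma algorithm (Corollary~\ref{cor:calcParkingPerm}, Remark~\ref{rem:calculConjugue}) that one step sends $M_k$ to the staircase with sink $k-1$; your computation ($\phi_1=abba(ba)^{n-3}b$, $u=abb$, $p=1$, $q=2$, sink $k-1$) is right. One small imprecision: the parking representative of $M_k-\eps^{(1)}$ is the cyclic shift $(n-2,0,1,\ldots,n-3,k-1)$, so $M_k-\eps^{(1)}$ is toppling equivalent to $M_{k-1}$ only \emph{after a permutation} of the first $n-1$ entries, not $\simil$-equivalent as literally written; since you established at the outset that the rank is invariant under automorphisms of $K_n$, the needed equality $\rho(M_k-\eps^{(1)})=\rho(M_{k-1})$ still follows and the induction closes. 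Your approach buys a self-contained proof that effectively establishes the stopping rule later used in the paper's rank algorithm (each algorithmic step on a staircase just decrements the sink), at the cost of the explicit conjugation computation; the paper's approach is computation-free but leans on the monotonicity corollary and the large-degree rank formula.
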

\begin{proof}
Since the automorphism group of $K_n$  is $\Sn{n}$ we may suppose that 
the configuration $f$ is equal to $ (0,1,2, \ldots , n-2, a)$ , where $a = f_{\alpha(n)}$.
The configuration is a parking configuration so that by Proposition \ref{th:caracEffective}
it is not $L_G$-effective if $a <0$, giving the first part of the formula.

\medskip
For the second part we have for the degree of $f$:
\[
deg(f) = a + \frac{(n-1)(n-2)}{2}
\]
Moreover the number of edges of the complete graph is
$m =  \frac{n(n-1)}{2}$ giving $ deg(f) =a +m- (n-1)$
So that we can apply Proposition \ref{propo:largeDegreeRank}
when $ deg(f) > 2m -2n$, that is when  $a >m-n-1$.
When this condition is satisfied Proposition \ref{propo:largeDegreeRank} 
gives:
$$ \rho(f) = deg(f) -m +n -1 = a$$
It is easy to check that when $a = 0$ the configuration $f$ is $L_G$-effective and 
subtracting 1 to $a$ gives a non $L_G$-effective configuration.
Hence the rank of the configuration $f$ when $a=0$ is 0.
By Corollary  \ref{coro:augmRangDeUn} while adding $1$ to $a$
from $a=0$ to $a = m-n$ the rank increases at most by 1 at each step,
and it has to go from 0 to a,  hence it increases exactly by 1 at each step. This ends the proof.

\end{proof}
\subsection{Algorithm} 
The two Propositions above give a recursive greedy algorithm in order to compute
the rank of a configuration $f$  in $K_n$. It consists in determining first  the parking 
configuration $g$  equivalent to $f$; if $g_n$ is negative then the rank is -1.
If $g_n \geq 0$, use the fact that for a parking configuration  there is necessarily an $i < n$ such that 
 $g_i = 0$, otherwise the configuration is not superstable; then  the rank of the 
configuration $g$ is equal to the rank of $h = g - \eps^{(i)}$ 
increased by 1. This algorithm terminates since one obtains recursively after
at most $deg(g)$ steps a  non $L_G$-effective configuration. Notice  that the rank 
is exactly the number of recursive calls of the algorithm minus 1. A first improvement of this algorithm consists in making it to stop when the configuration attained at some step satisfies the conditions stated in Proposition \ref{propo:caculRangPourLaMax}.

\medskip

The main difficulty  of the algorithm consists in obtaining at each step  the parking configuration 
toppling equivalent to a given one, but this can be simplified supposing that the configurations
are sorted at each step. That is reordering the $f_i$ such 
that  $f_{i-1} \leq f_i$ for all $1 < i < n$, this reordering do not modify the rank.

\bigskip

Let us consider as an example the computation of the rank of the configuration 
 $f = (3, 1, 3, 4, -1)$ of $K_5$. 

 The configuration $ (0, 3, 0, 1, 6)$ is the parking configuration toppling equivalent to $f$.
It is preferable to write the coordinates in increasing order obtaining a new configuration with the same rank:
$(0, 0, 1, 3, 6)$.
 A first step is to subtract $\eps^{(1)}$ giving 
$(-1,0,1, 3, 6)$. Then to reach the toppling equivalent parking configuration, we topple once the sink to obtain $(0,1,2,4,2)$ which is not a 
parking one  due to the fourth vertex and after its toppling we obtain  the expected parking configuration which is $h = (1, 2, 3, 0, 3)$. From Proposition~\ref{propo:caculRangMoinsUn} we have $\rho(f) = \rho(h) +1$. Reordering gives $h' = (0,1, 2, 3, 3)$ and concludes this first step.

\smallskip

At the second step we get the configuration $(-1, 1,2,3,3)$ and the 
parking configuration $(0,1,2,3,2)$ after toppling and reordering. 
The third step gives $(-1,1,2,3,2)$ and the parking configuration 
$(3,0,1,2,1)$. Two other steps are necessary 
to obtain the non $L_G$-effective configuration $(1,2,0,3,-1)$ of rank $-1$
giving $\rho(f) = 4$.

Notice that an application of Proposition \ref{propo:caculRangPourLaMax} would
have given  the result after the first step, since this Proposition \ref{propo:caculRangPourLaMax} gives
$\rho(0,1,2,3,k) = k$ for $k \geq -1$.

A first glance at this algorithm shows that the  complexity of the determination 
of the rank of the configuration  $f$ in $K_n$  is in $O(nD)$, where $D$ is the degree of $f$. 
But this could be lowered  to    $O(n)$ using some  observations on Dyck words.

\subsection{Working on Dyck words} 

We use here the notation of Section \ref{sec:dyck} to which we add some items.
 We  denote by $\eps$ the empty word and 
the {\em height } of  a word $w$  is given    by $\delta(w)$, the value of $ |w|_a - |w|_b$.
Recall that a $w$ is a Dyck word if $wb \in D_n$ where $n = |w|_a+1$. 
The  {\em first return} decomposition of a Dyck word $w$ is given by  $w=aubv$ where $u$ and $v$
are Dyck words. 
The  {\em factorization of a word $w$  of $D_n$ into primitive factors} is given by 
$w= aw_1baw_2b \cdots aw_kbb$ where the $w_i$'s are Dyck words.

Let $f=(f_1, f_2, \ldots, f_{n-1}, f_n)$ be a parking configuration of $K_n$  such that 
$f_1 \leq f_2 \cdots \leq f_{n-1}$, then $f_1=0$. The configuration $ f - \eps^{(1)}$ has a negative value ($-1$) in vertex $x_1$, the 
equivalent configuration $f'= f - \eps^{(1)} - \Delta^{(n)}$ is such that 
$f'_1 = 0$, $f'_n = f_n -(n-1)$ and $f'_j = f_j +1$ for all other values of $j$. 
 Since $f$ is 
a parking configuration, $f'$ satisfies equation (\ref{eqn:prePark})  so that we can apply Proposition \ref{prop:uniqConj}
in order to determine the parking configuration $g$ equivalent to it. This gives:

\begin{propo}
\label{prop:oneStepRank}
For a parking configuration $f$ of $K_n$ such that 
$f_1 \leq f_2 \cdots \leq f_{n-1}$ and $\phi(f)=aubvb$, where $u,v$ are Dyck words, the 
configuration $f - \eps^{(1)}$ is such that:

\begin{equation}
 \phi(f - \eps^{(1)}) = vabub { \hskip 0.5cm }{\rm and }{ \hskip 0.5cm }
\psi(f - \eps^{(1)}) = f_n - |aub|_a
\end{equation}
\end{propo}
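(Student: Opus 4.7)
The plan is to reduce $f - \eps^{(1)}$ to an equivalent configuration satisfying condition (\ref{eqn:prePark}), and then to read off both $\phi$ and $\psi$ from the cyclic-conjugate machinery of Proposition~\ref{prop:uniqConj} and Remark~\ref{rem:calculConjugue}. Since $f$ is a sorted parking configuration we have $f_1 = 0$, so $(f - \eps^{(1)})_1 = -1$; to restore condition (\ref{eqn:prePark}) I would topple the sink once, setting $f' := f - \eps^{(1)} - \Del{n}$. Using $\Del{n}_j = -1$ for $j < n$ and $\Del{n}_n = n-1$ in $K_n$, this gives $f'_1 = 0$, $f'_i = f_i + 1$ for $1 < i < n$, and $f'_n = f_n - (n-1)$. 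Sortedness of $f$ together with the parking inequality $f_{n-1} \leq n-2$ ensures that $f'$ indeed satisfies condition (\ref{eqn:prePark}); since $f' \simil f - \eps^{(1)}$, both $\phi$ and $\psi$ of $f - \eps^{(1)}$ coincide with those of $f'$.

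Next I would compute $\phi_1(f')$ in terms of $\phi_1(f) = aubvb$. Recall that the number of $b$'s between the $(i-1)$-th and $i$-th occurrence of the letter $a$ in $\phi_1(f)$ equals $f_i - f_{i-1}$ (with the convention $f_0 = 0$), and that $\phi_1(f)$ ends with $n - f_{n-1}$ letters $b$. Passing from $f$ to $f'$ leaves $f_1 = 0$ unchanged while incrementing every other $f_i$ by $1$; on the word side this is exactly the operation of inserting one $b$ immediately after the first letter $a$ and deleting one $b$ at the tail. Applied to $\phi_1(f) = aubvb$ this yields
\[ \phi_1(f') \; = \; abubv, \]
a formula that is valid uniformly, whether or not $u$ or $v$ is empty.

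Then I would identify the cyclic conjugate of $abubv$ belonging to $D_n$. The word $vabub$ is obtained from $\phi_1(f')$ by rotating the prefix $abub$ to the back, and its own prefix $vabu$ is the concatenation of the Dyck word $v$, the balanced pair $ab$, and the Dyck word $u$, hence itself a Dyck word. Since $vabub$ ends with the letter $b$, it lies in $D_n$. Proposition~\ref{prop:uniqConj} then forces $\phi(f - \eps^{(1)}) = \phi(f') = vabub$.

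Finally, I would apply Remark~\ref{rem:calculConjugue} to the factorization $\phi_1(f') = UV$ with $U = abub$ and $V = v$, under which the equivalent parking configuration $g$ satisfies $\phi_1(g) = VU = vabub$. Setting $p := |U|_a = 1 + |u|_a$ and $q := |U|_b = 2 + |u|_a$ (using that $u$ is Dyck, so $|u|_a = |u|_b$) gives $q - p = 1$, whence
\[ g_n \; = \; f'_n + n(q-p) - q \; = \; \bigl(f_n - (n-1)\bigr) + n - (2 + |u|_a) \; = \; f_n - (1 + |u|_a) \; = \; f_n - |aub|_a, \]
which is the asserted value of $\psi(f - \eps^{(1)})$. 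The delicate point of the argument is the word-level translation of the sink toppling, namely the identity $\phi_1(f') = abubv$; once that is in hand, both claims of the proposition follow immediately from the results of Section~3.
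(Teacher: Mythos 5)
Your proposal is correct and follows essentially the same route as the paper: pass to $f' = f - \eps^{(1)} - \Del{n}$, observe $\phi_1(f') = abubv$, take its conjugate $vabub \in D_n$ via Proposition~\ref{prop:uniqConj}, and obtain $\psi$ from Remark~\ref{rem:calculConjugue} with $q-p=1$. You merely spell out the word-level translation and the membership of $vabub$ in $D_n$ in more detail than the paper does.
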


\begin{proof}
The existence of the decomposition $\phi(f)=aubvb$ is a consequence of the algorithm computing parking configurations via Dyck words.
Let $f'$ be as above, 
since $f'_j = f_j +1$ for all $j \notin \{i,n\}$ and $f'_1=0$, it is easy  to  check that :
$\phi_1(f') = abubv$, the conjugate of this word which is in  $D_n$  is $ vabub$. 
Applying Proposition \ref{prop:uniqConj} we have
$$ \phi(f') = vabub$$
Remark \ref{rem:calculConjugue} gives
$$ \psi(f') = f'_n + n(q-p) -q$$
where $p = |abub|_a$ and $q=|abub|_b$, giving $q-p = 1$, hence:
$\psi(f') = f'_n + n - |u|_b -2$ and the result follows from $f'_n = f_n -n +1$.

 \end{proof}

\bigskip

  This result above shows that instead of considering the parking  configuration 
   $f$, it is preferable to work with $(w  =\phi(f), s=f_n)$   a   pair consisting of  word $w$ in $D_n$ and an  integer $s$.
    Define 
   two  functions $\theta_1$ and $\theta_2$, where $\theta_1(w)$ is a word in $D_n$  and $\theta_2(w,s)$ is an integer, they are both given
   using the first return to the origin decomposition $aubvb$ of $w$:
   \begin{equation}
   \label{eq:theta}
   \theta_1(aubvb) = vabub   { \hskip 1cm}    \theta_2(aubvb, s) = s- |aub|_a.
   \end{equation}
   Hence, one loop iteration starting from the pair $(w,s)$ leads to the pair 
   \begin{equation}
     \Lambda(w,s) = (\theta_1(w),\theta_2(w,s)).
   \end{equation} 
    So that the rank r  of $f$ is obtained by iterating $\Lambda$ until reaching a negative number for
   the second component of the pair, this may be written:
    $$ r+1 = \min_{k\geq 0} \{\theta_2^k(\phi(f), f_n) < 0\}$$
   where we use  the slight abuse of notation $\theta_2^k(f,s)$ to denote the second component of $\Lambda^k(w,s)$. 
    The algorithm on 
    parking configurations could then be translated in terms of Dyck words as:
   
   \begin{itemize}
        \item $w$ {\tt :=} $\phi(f) $; $s$ {\tt :=} $f_n$; $r$ {\tt := -1}
       \item {\tt  while}  $s\geq 0$ {\tt do}
       
          \hskip 1cm   $s$ {\tt :=} $\theta_2(w,s)$; $w$ {\tt :=} $\theta_1(w) $; $r$ {\tt :=} $r +1$
       
       \item {\tt od }
       \item {\tt return } $r$
    \end{itemize}

   \bigskip
   
   Iterating  this  Proposition a few times gives:
   
   \begin{lemma}
   \label{lem:iterTheta}

   Let $f$ be a parking configuration and 
   $w \in D_n$ be equal to $\phi(f)$.  Let $w = aw_1baw_2b \cdots aw_kbb$ be the factorization  
   of $w$ into its primitive  factors. 
   Then 
   $$ \theta_1^{k} (w) = abw_1abw_2\cdots abw_kb{\hskip 1cm } \theta_2^{k} (w,s) = s-(n-1)$$
   Moreover any  parking configuration $g$ such that $\phi(g) = w_1abw_2 \cdots abw_kab$ and $g_n = s -(n-1) \geq 0$
   satisfies:
   $ \rho(g) = \rho(f) -k$.
   \end{lemma}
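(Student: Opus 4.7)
The plan is to prove the two closed forms by a single induction on $k$, and then read off the rank identity by iterating Proposition \ref{propo:caculRangMoinsUn} together with Proposition \ref{prop:oneStepRank}. Set
$$W^{(j)} \;=\; aw_{j+1}b\cdot aw_{j+2}b\cdots aw_kb\cdot abw_1abw_2\cdots abw_j\cdot b,$$
so that $W^{(0)}=w$ and $W^{(k)}$ is the target word. I would show by induction on $j$ that $\theta_1^j(w)=W^{(j)}$. For the step, the first-return decomposition of $W^{(j)}$ is transparent: the initial letter $a$ closes at the $b$ terminating the primitive factor $aw_{j+1}b$, since $w_{j+1}$ is a Dyck word and does not bring the height back to $0$ in between. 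This identifies the blocks in $W^{(j)}=aubvb$ as $u=w_{j+1}$ and $v=aw_{j+2}b\cdots aw_kb\cdot abw_1\cdots abw_j$, and the rewriting rule $\theta_1(aubvb)=vabub$ reorganizes letter by letter into $W^{(j+1)}$.

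Simultaneously, at step $j$ the second coordinate is decreased by $|aub|_a=1+|w_{j+1}|_a$. Summing over $j=0,\dots,k-1$ gives a total decrease of $k+\sum_{j=1}^k |w_j|_a$, which equals $n-1$ because the $n-1$ letters $a$ of $w\in D_n$ are exactly the $k$ opening letters of its primitive factors together with the letters $a$ contained in the Dyck words $w_j$. Hence $\theta_2^k(w,s)=s-(n-1)$.

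For the rank identity I would build the sequence of sorted parking configurations $f=f^{(0)},f^{(1)},\dots,f^{(k)}$ where $f^{(j+1)}$ is the sorted parking representative of the class of $f^{(j)}-\eps^{(1)}$. By Proposition \ref{prop:oneStepRank} applied at each step, $\phi(f^{(j)})=\theta_1^j(w)$ and $\psi(f^{(j)})=\theta_2^j(w,s)$. The successive subtractions in $\theta_2$ being nonnegative and $\psi(f^{(k)})=s-(n-1)\geq 0$ by hypothesis, every intermediate $\psi(f^{(j)})$ is $\geq 0$, so each $f^{(j)}$ is $L_G$-effective. Each $f^{(j)}$ is moreover a sorted parking configuration, so $f^{(j)}_1=0$ (its $\phi$ begins with the letter $a$), and Proposition \ref{propo:caculRangMoinsUn} yields $\rho(f^{(j)})=\rho(f^{(j+1)})+1$. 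Iterating gives $\rho(f^{(k)})=\rho(f)-k$. Since $g$ shares the same values of $\phi$ and $\psi$ as $f^{(k)}$, the two configurations are toppling equivalent and in particular $\rho(g)=\rho(f)-k$.

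The main obstacle is the bookkeeping in the inductive step, specifically checking that the first-return decomposition of $W^{(j)}$ is the one claimed; this reduces to the fact that $w_{j+1}$ is a Dyck word, so the height stays strictly positive strictly between the opening $a$ and the matching closing $b$ of the leftmost primitive arch. The remaining pieces --- the $\theta_2$ arithmetic, the monotonicity argument guaranteeing $L_G$-effectiveness all along the iteration, and the applicability of Proposition \ref{propo:caculRangMoinsUn} --- are then routine.
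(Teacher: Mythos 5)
Your proof is correct and takes essentially the same route as the paper, which offers no written argument beyond ``iterating this Proposition a few times'': your induction on the words $W^{(j)}$ (identifying the first-return factor at each step), the letter count giving the total decrease $n-1$ in $\theta_2$, and the monotonicity argument that lets Proposition \ref{propo:caculRangMoinsUn} apply at every intermediate sorted parking configuration supply exactly the details the paper leaves out. The only point worth noting is that you tacitly read the lemma's word $w_1abw_2\cdots abw_kab$ as $\theta_1^{k}(w)=abw_1abw_2\cdots abw_kb$; this is the right interpretation, since the printed word has the wrong letter count to lie in $D_n$ and is evidently a typo.
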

   
   In the sequel we will use extensively the following sequence associated to a word $w$ containing $m$ occurrences
   of the letter $a$
   \begin{definition}
   The sequence of heights $\eta_1, \eta_2, \ldots, \eta_m$ of $w$ is such that 
   $$\eta_i = |w^{(i)}|_a - |w^{(i)}|_b$$
   where $w^{(i)}$ is the prefix of $w$ followed by the $i$-th occurrence of the letter $a$.

   \end{definition}
   
   \medskip
   Let $w \in D_n$,    $w=aw_1b \cdots aw_kbb$, be the decomposition of $w$ into primitive factors
   and 
   $\eta_1, \eta_2, \ldots, \eta_{n-1} $ be
    its  sequence of heights.
   Let  $\eta'_1, \eta'_2, \ldots, \eta'_{n-1} $ be that of   $\theta_1^k(w)$, then:
       
     \centerline{$\eta'_i = \eta_i$ if $\eta_i = 0$ and $\eta'_i = \eta_i -1$ if $\eta_i > 0.$}

  \begin{theorem}
  \label{th:calculRangKnDyck}
 Let $f$ be a parking configuration on $K_n$, let $w = \phi(f)$  and $\eta_1, \eta_2, \ldots  \eta_{n-1}$
 be the sequence of heights in $w$.
 Then the rank of $f$ is given by:
 \begin{equation}
 \label{eq:calculRangKnDyck}
1+ \rho(f) =  \sum_{i=1}^{n-1} Max(0, q-\eta_i+\chi(i \leq r))
  \end{equation}
  Where $q$ and $r$ are the quotient and remainder of the division of $f_n+1$ by $n-1$, and $\chi(i \leq r)$ 
  is equal to 1 if $i \leq r$ and to 0 otherwise.
   \end{theorem}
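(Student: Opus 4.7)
The strategy is to iterate the map $\Lambda=(\theta_1,\theta_2)$ on the pair $(\phi(f),f_n)$ and count iterations until the second coordinate becomes negative; by the algorithmic description following~(\ref{eq:theta}), this count equals $\rho(f)+1$. Writing the Euclidean division $f_n+1=q(n-1)+r$ with $0\le r<n-1$, I will first process $q$ full ``rounds'' and then handle a partial round associated with $r$.

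A full round, starting from a word $w'\in D_n$ whose primitive factorization is $aw'_1b\cdots aw'_Kbb$, consists of the $K$ applications of $\Lambda$ that transform $w'$ into $abw'_1abw'_2\cdots abw'_Kb$; by Lemma~\ref{lem:iterTheta} this lowers $s$ by exactly $n-1$, and a direct comparison of the two words shows that it preserves the left-to-right order of the $a$-occurrences and (by the height identity stated just before the theorem) replaces each positive height $\eta_i$ by $\eta_i-1$ while leaving zero heights fixed. Iterating this $q$ times produces a word $w_q$ whose $a$ at original position $i$ has height $\max(0,\eta_i-q)$, so the primitive factors of $w_q$ are delimited exactly by $J_q:=\{i:\eta_i\le q\}$. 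Since the number of primitive factors at the beginning of round $j+1$ is $|J_j|$, the number of iterations used in the first $q$ rounds is
\[
\sum_{j=0}^{q-1}|J_j|=\sum_{j=0}^{q-1}|\{i:\eta_i\le j\}|=\sum_{i=1}^{n-1}\max(0,q-\eta_i),
\]
and $s$ has dropped to $f_n-q(n-1)=r-1$. If $r=0$, the loop terminates here and (\ref{eq:calculRangKnDyck}) holds since $\chi(i\le 0)=0$ for every $i\ge 1$.

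When $r\ge 1$, the $q$ rounds end with $s=r-1\ge 0$ and further iterations are needed. Here I use the identity $\theta_1(aubvb)=vabub$ iteratively: when $v$ is nonempty, the first primitive factor of $vabub$ is the first primitive factor of $v$, which is the second primitive factor of $aubvb$, so successive applications of $\Lambda$ within a single round strip off the primitive factors of the current word from the left \emph{in order}. Enumerating $J_q=\{i^{(q)}_1<\cdots<i^{(q)}_{|J_q|}\}$ and setting $i^{(q)}_{|J_q|+1}=n$, the $l$-th primitive factor of $w_q$ contains $i^{(q)}_{l+1}-i^{(q)}_l$ occurrences of $a$, so after $l$ extra iterations $s=r-i^{(q)}_{l+1}$. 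Since $r\le n-2<n=i^{(q)}_{|J_q|+1}$, a smallest $l$ with $i^{(q)}_{l+1}>r$ exists, giving $l^\ast=|\{i\in J_q:i\le r\}|=|\{i\le r:\eta_i\le q\}|$ partial iterations.

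Combining the two contributions, $\rho(f)+1=\sum_i\max(0,q-\eta_i)+|\{i\le r:\eta_i\le q\}|$; a short case analysis on whether $\eta_i\le q$ and whether $i\le r$ shows this equals $\sum_i\max\bigl(0,q-\eta_i+\chi(i\le r)\bigr)$, establishing~(\ref{eq:calculRangKnDyck}). The main subtlety I expect is justifying the ``peel off in order'' step rigorously by induction on one round (using the identity $\theta_1(aubvb)=vabub$), together with a bookkeeping check that $s$ remains nonnegative before the terminal iteration of the loop; the latter is immediate because each primitive factor contains at least one $a$, so the cumulative drop of $s$ during any proper prefix of a round is strictly less than $n-1$.
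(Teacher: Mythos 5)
Your proof is correct and follows essentially the same route as the paper: both rest on the loop characterization of $\rho(f)+1$ by iterating $\Lambda$, on Lemma~\ref{lem:iterTheta} for the effect of a full round (drop of $n-1$ in $s$), and on the height transformation stated just before the theorem, so your explicit summation over the $q$ full rounds followed by the final partial round is simply an unrolled version of the paper's induction on $q$, whose base case $q=0$ coincides with your partial-round analysis. The only cosmetic omission is the trivial case $f_n<-1$ (negative quotient), where the loop stops immediately and both sides of (\ref{eq:calculRangKnDyck}) are zero, as the paper notes separately.
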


 \begin{proof}
 When $f_n < 0$ all the terms in  the sum in the right hand side of Equation (\ref{eq:calculRangKnDyck}) are equal to $0$
 so that the formula gives $\rho(f) = -1$ as expected.
 
 \medskip
 
 For $f_n \geq 0$, we  proceed by induction on $q$ the quotient of $f_n +1$ by $n-1$ and we
  consider the decomposition of $\phi(f)$ into factors:
   $$\phi(f) = w = aw_1b \cdots aw_kbb$$

  \begin{itemize}
  \item  If $q = 0$ then $0 \leq f_n < n -1 $ hence $\rho(f) $ is determined by the position of $f_n$ in the intervals given by the lengths
  of the prefixes $fw'$  of $w$ satisfying $\delta(w') = 0$ these are $ aw_1baw_2b \cdots aw_ib$  for $i = 1, 2, \ldots k$.
   More precisely let $n_i$ be given for $1 \leq i \leq k$ by 
  $$ n_i = |aw_1baw_2b \cdots aw_ib|_a$$
 and let us   denote $n_0 = 0$.
 Then we have $\theta_2^i(w,f_n) = f_n - n_i $ for $i = 1, 2 , \ldots , k+1$ so that $\rho(f)$ is given by the first
 $i$ for which   $\theta_2^i(w,f_n) $ is negative which may be translated by :
 $$ \rho(f) = i \  \Longleftrightarrow \  n_i \leq f_n < n_{i+1}.$$
 
  On the other hand   the $n_i$ can be characterized by $\eta_{n_i } =0$ giving:
 
  $$1+\rho(f) =  |\{j | j \leq r, \eta_j = 0 \}|$$
  But this is exactly what equation \ref{eq:calculRangKnDyck} gives.
  
  \item If  $q > 0$ let us  consider  $g= \theta_1^{k}(w)$ and notice that $ \theta_2^{k+1} (w,f_n) = f_n - (n-1)$ 
  and let $g$ be such that $\phi(g) = w, g_n = f_n -(n-1)$.
  Let then the quotient of $g_n$ by $n$ is $q' = q-1$ and the remainder is also $r$.
  Let $\eta'_1, \eta'_2, \ldots,  \eta'_{n-1}$ be the sequence of  heights of $v$.
  Applying the inductive hypothesis we have
  $$1+\rho(g) =  \sum_{i=1}^{n-1} Max(0, q-1-\eta'_i+\chi(i \leq r))$$

 By Lemma \ref{lem:iterTheta} , we have $\rho(f) = \rho(g) +k$ and 
  $v =w_1abw_2b \cdots abw_kab$.
  
It is easy to check since the heights $\eta'_i$ in  $g$ are such that $\eta'_i =  \eta_i-1$ if $\eta_i \neq 0$ and 
 $\eta'_i = 0 $ when $\eta_i=0$ we have  
 $Max(0, q-1-\eta'_i+\chi(i \leq r)) = Max(0,q-\eta_i \chi(i \leq r)$ when $\eta_i \neq 0$
 and $Max(0, q-1-\eta'_i+\chi(i \leq r)) = Max(0,q-\eta_i \chi(i \leq r)-1)$ when $\eta_i =0$
 from which we get 
  $$1+\rho(g) =  \sum_{i=1}^{n-1} Max(0,q- \eta_i+\chi(i \leq r))- k$$

  The proof ends by reminding that $\rho(f) = \rho(g) + k$.
\end{itemize}
 \end{proof}
 
 \medskip
 
  An exemple of calculation:
 $$f=(0,0,0, 1,1,1,4,7,7,9, 26)$$
 
 For this configuration  we have: $n = 11, f_n = 26, q = 2, r = 7$.
\vskip 0.2cm
\begin{center}
\begin{tabular}{|c|c|c|c|c|c|c|c|c|c|c|c|c|}
\hline
$i$ &1 & 2& 3 & 4& 5& 6 & 7& 8 & 9 & 10\\

\hline
$f_i$ &0 & 0 & 0 & 1& 1 & 1 & 4 & 7 & 7 & 9\\
$\eta_i$ &0 & 1& 2& 2 & 3& 4& 2 & 0 & 1 & 0\\
$q+\chi(i \leq r)$& 3& 3 & 3 & 3& 3 & 3 & 3 & 2 & 2 & 2\\
\hline 
$q+\chi(i \leq r) -\eta_i$& 3 & 2 & 1 & 1& 0  & -1& 1 & 2  & 1  & 2\\
\hline
\end{tabular}
\end{center}

\medskip

\centerline{Adding the positive values of $q+\chi(i \leq r)$ gives $1+\rho(f)= 13$ so that $\rho(f) = 12.$}
This formula will be rephrased in terms of the representation of a Dyck word by a path in $\Z^2$ in a subsequent 
section and in terms of skew cylinders in the appendix.

\medskip
 
\section{A new parameter for Dyck words}

\subsection{Prerank and coheights}
The algorithm used to determine the rank of a configuration of the complete graph suggests the introduction
of a parameter on Dyck words (or Dyck words) which we call { \em prerank}. In this Section we show how this 
parameter behaves with other known parameters, it may be skipped by the reader only interested by the 
rank on complete graphs.

 We denote $\theta$ the mapping associating to the Dyck word $w$ with first return to the origin decomposition $w = aubv$ the Dyck path $vabu$, notice that $\theta_1(wb) = \theta(w)b$.  And we consider the Dyck  word $(ab)^n$ consisting of the concatenation  of $n$ two-letters words $ab$.
 
\begin{definition} The prerank  $\rho(w)$ of  the Dyck path $w$, of length $2p$, is the  integer $k$ such that $k$ operations $\theta$ are needed to reach the 
word  $ (ab)^p$, or equivalently the smallest $k$ such that: $\theta^k(w) = (ab)^p$. 
\end{definition}
Recall that the {\em area} of a Dyck word  is equal to the sum of the elements  of its sequence of heights.
So that we have $area(aubv) = area(u)+area(v) + |u|_a$.
Notice that $\rho( (ab)^n) = area((ab)^n)=  0$, one can also prove that $\rho(a^nb^n) = area( a^nb^n)= \frac{n(n-1)}{2}$, this suggests that pre-rank and area
could be  equal, but it is not always  the case since  for example $\rho(abaabb) = 2$ while $area(abaabb) = 1$. 

The prerank may be calculated using the notion of {\em coheight}, which may be defined as follows:

Let the heights of the prefixes followed by an occurrence of $a$ of a Dyck word $w$ of length $2p$  be: $\eta_1, \eta_2, \ldots \eta_p$,
 let $m$ be the largest integer such that $\eta_m$ is maximal among the $\eta_i$, then the sequence of  co-heights
 of $w$ 
is given by the formula
\begin{equation}
\overline{\eta}_i = \left\{
\begin{array}{ll}
\eta_m-\eta_i & \mbox{if $i \leq m$}\\
\eta_m-\eta_i -1 &  \mbox{otherwise}
\end{array}
\right. 
\end{equation}

The following lemma allows to determine the prerank
\begin{lemma}
The prerank $\rho(w)$ of the Dyck word $w$ is given by the sum of the  elements of its sequence of coheights.
\end{lemma}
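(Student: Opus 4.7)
My strategy is to prove, by induction on $S(w) := \sum_i \overline{\eta}_i$, that $\rho(w) = S(w)$. The base case is that $(ab)^p$ is a fixed point of $\theta$: its first-return decomposition is $u = \eps$, $v = (ab)^{p-1}$, which gives $\theta((ab)^p) = v \cdot ab \cdot u = (ab)^p$. Moreover every height of $(ab)^p$ is zero, so every coheight vanishes and $S((ab)^p) = 0$. The inductive step reduces to showing that every Dyck word $w \neq (ab)^p$ satisfies $S(\theta(w)) = S(w) - 1$; iterating $\theta$ from any $w$ then reaches $(ab)^p$ in exactly $S(w)$ steps, which by definition of the prerank is $\rho(w)$.

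For the inductive step I would work from the first-return decomposition $w = aubv$ with $|u|_a = j$, letting $(\zeta_k)_{k=1}^{j}$ and $(\xi_k)_{k=1}^{p-j-1}$ denote the height sequences of the Dyck subwords $u$ and $v$. A direct comparison shows that the height sequences of $w$ and $\theta(w) = vabu$ are
\[
\eta(w) = (0,\ 1+\zeta_1,\dots,1+\zeta_j,\ \xi_1,\dots,\xi_{p-j-1}),\qquad
\eta(\theta(w)) = (\xi_1,\dots,\xi_{p-j-1},\ 0,\ \zeta_1,\dots,\zeta_j).
\]
Hence the multisets of heights differ only by a uniform drop of $1$ on the $j$ entries coming from $u$, and in particular $\mathrm{area}(\theta(w)) = \mathrm{area}(w) - j$. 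Expanding the definition of the coheights then gives the closed form
\[
S(w) \;=\; p\,h \;-\; (p - m) \;-\; \mathrm{area}(w),
\]
where $h = \max_i \eta_i$ and $m$ is the largest index with $\eta_m = h$; the target identity $S(\theta(w)) = S(w) - 1$ thus becomes a purely numerical statement about $h, h', m, m'$ and $j$.

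Finally I would verify this identity by a short case split based on whether $h_u \geq h_v$ or $h_u < h_v$, writing $h_u = \max_k \zeta_k$ and $h_v = \max_k \xi_k$ (with the convention $h_u = -\infty$ if $u = \eps$). In the first case the maximum of $w$ is achieved inside the $u$-block, so $h - h' = 1$ and $m' - m = p-j-1$; in the second case $h = h' = h_v$ and $m$ moves from the $v$-block of $w$ to the $v$-block of $\theta(w)$, giving $m' - m = -(j+1)$. In both situations the three contributions combine to $S(w) - S(\theta(w)) = 1$. The main technical obstacle is the borderline subcase $h_u + 1 = h_v$, where the maximum of $w$ is attained simultaneously in the $u$-block and the $v$-block: there one must invoke that $m$ is by definition the \emph{last} index achieving the maximum in order to conclude that $m$ still lies inside the $v$-portion of $w$, which is precisely what makes the bookkeeping balance. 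Once this case is handled the lemma follows.
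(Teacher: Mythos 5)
Your proof is correct, and at the top level it has the same skeleton as the paper's: both arguments rest on the single key claim that one application of $\theta$ to a word $w\neq (ab)^p$ decreases the sum of coheights by exactly $1$, and both split into the same two cases (your dichotomy $h_u\geq h_v$ versus $h_u<h_v$ is exactly the paper's dichotomy on whether the last maximal height of $w$ lies in the $aub$-block or in the $v$-block). Where you differ is in how the decrement is verified: the paper matches the coheight sequences of $w$ and $\theta(w)$ entry by entry, observing that every coheight is preserved except the one attached to the initial height $\eta_1=0$, which drops by $1$; you instead package everything into the closed form $S(w)=ph-(p-m)-\mathrm{area}(w)$ and track only three aggregate quantities: the area (which drops by exactly $|u|_a$), the maximum $h$, and the last-maximum position $m$. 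Your version turns each case into a short arithmetic check, and it correctly isolates the only delicate point, the borderline subcase $h_u+1=h_v$, where the convention that $m$ is the \emph{last} maximizing index is what keeps $m$ in the $v$-block; the paper's entrywise argument contains the same subtlety implicitly in its claim that the decreased heights move from before $\eta_m$ to after $\eta_m$. One detail you should state explicitly in your framing: since you induct on $S(w)$ rather than on $\rho(w)$ as the paper does, you need that coheights are nonnegative (because $m$ is the last maximizing index, $\eta_i<\eta_m$ for $i>m$), so that $S(w)=0$ forces $w=(ab)^p$ and the iteration of $\theta$ terminates after exactly $S(w)$ steps; this is immediate, and as a small bonus your argument then also establishes the termination of the $\theta$-iteration, which the paper's definition of the prerank takes for granted.
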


\begin{proof}
We proceed by induction on the value of $\rho(w)$. If $\rho(w) = 0$, then $w = (ab)^p$ and $\eta_i = 0$ for all $1 \leq i \leq n$ hence $\eta_m = 0$
and $m = n$, then  all the coheights are equal to $ 0$ in accordance with the Lemma.
\medskip

Let  $w$ be such that $ \rho(w)\neq 0$ and let $aubv$ be the first return to the origin decomposition of $w$, we have $\rho(w) = 1 + \rho(vabu)$. We proceed to the comparison of the coheights of $w$ and those of $vabu$ considering two cases according to the position $m$ of the largest prefix of $w$ with maximal height. Denote $k = |aub|_a$.
\begin{itemize}
\item If $k< m$ then the sequence of  heights  of $w$ may be written
 $$\eta_1(=0), \eta_2, \ldots, \eta_k (=0), \ldots ,\eta_m, \ldots,  \eta_n$$  
 and that of $vabu$:
$$\eta_{k+1}, \ldots, \eta_m, \ldots,  \eta_n, 0, 0,\eta'_2,  \ldots, \eta'_k$$
 where $\eta'_i = \eta_i -1$ for $i = 2,\ldots k.$

The values of the heights which decrease by one from the first sequence to the second one,  are moved from before $\eta_m$
to after $\eta_m$; hence their corresponding coheights do not vary. 
Those who do not decrease give also equal coheights.  The only 
difference is that $\eta_1=0$ moving from before $\eta_m$ to after it  and remains equal to 0,  so that the corresponding
coheight decreases by one. Hence the sum of the sequence of  coheights of $vabu$ is one less than the sum of the sequence
of those of $f$. Since $\rho(w) = 1 +\rho(vabu)$, the inductive hypothesis proves the  assertion in the Lemma.

 \item If $k  \geq m$.
 Then the sequences of heights may be written:
 $$\eta_1(=0), \eta_2, \ldots, \eta_m, \ldots, \eta_k(=0), \ldots,  \eta_n$$  
for $f$
and 
$$\eta_{k+1} \ldots \eta_n, 0, 0,\eta'_2, \ldots \eta'_m   \ldots \eta'_k$$
for $vabu$,  where $\eta'_i = \eta_i -1$ for $i = 2,\ldots k.$
The maximal height decreases by 1 for $vabu$, so that  the coheights in $w$ and the corresponding ones in $vabu$ are equal
except as above for the first coheight, for which the value decreases by 1. A similar argument as above ends the proof. 
 \end{itemize}

\end{proof}

\subsection{The mapping $\Phi$}
\label{sec:phi}

For a word $w$ of $D_n$   we consider its sequence of heights $\eta_1, \eta_2 , \ldots,  \eta_{n-1}$ and the largest integer $m$ such that
$\eta_m$ is maximal among the $\eta_i$'s and define $\Phi(w)$ as follows:

\begin{definition}
\label{def:PHI}
Consider   the decomposition of $w \in D_n$ as
$w = uv$ such that $|u|_a = m$, where $m$ is defined above,  then $$\Phi(w) = \tilde{u}\tilde{v}$$
\end{definition}
\noindent
We use here the notation: if  $w=x_1x_2 \ldots x_p $  where $x_i$ are letters, then $\tilde{w} =x_p \ldots x_2x_1$.

\medskip

It is not difficult to prove  that the word $\Phi(w)$ is in $D_n$, by showing that if $w'$ is a strict prefix of $w$ then 
$\delta(w') \geq 0$.

\begin{remark}
\label{rem:PhiTild}
For any word $w$ of $D_n$, the word $\Phi(w)$ is the conjugate of $\tilde{w}$ which is a Dyck word followed by an occurence of $b$.
Consequently $\Phi(\Phi(w)) = w$.
\end{remark}

\begin{proof}
With the notation in Definition \ref{def:PHI} we have
$$\tilde{w}=\tilde{v}\tilde{u}.$$
A conjugate of this word  is $\tilde{u}\tilde{v}=\Phi(w)$, and  this word is in $D_n$.
The cyclic lemma insures the unicity of such a word among the conjugates of $\tilde{w}$, 
thus ending the proof.
\end{proof}

We recall the definition of the parameter $\mathrm{dinv}$ of $w$ introduced by M. Haiman \cite{garsiaHaiman} which is obtained 
from the sequence of heights $\eta_1, \eta_2, \ldots , \eta_k$ of a Dyck word $w$ of length $2k$. We use here the same definition 
for $DINV(w)$ when $w \in D_n$, since it makes no difference for the sequence of heights to consider a Dyck word $w$ or the word $wb$ which is in $D_n$.  Also for $w$ in $D_n$ we define $area(w) = area(w') = \sum_{i=1}^{n-1} \eta_i$ and $\rho(w) = \rho(w')$, where $w'$ is given by: $w= w'b$.
\begin{definition}
For a word $w \in D_n$ the parameter $\mathrm{dinv}(w)$  is equal to 
 the number of elements of the set $DINV(w)$ of pairs 
$(i,j)$ such that $i< j$ and $\eta_i = \eta_j$ or $\eta_j= \eta_i -1$.
\end{definition}
The main result of this subsection is:
\begin{propo}
\label{prop:Phi}
The mapping $\Phi$ is an involution, and for any  $w \in D_n$ we have:
$$ \rho(w) = area(\Phi(w)) \hskip 2cm \mathrm{dinv}(w) = \mathrm{dinv}(\Phi(w))$$

\end{propo}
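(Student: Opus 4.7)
My plan is to reduce all three claims to an explicit formula for the heights $\eta'_1,\ldots,\eta'_{n-1}$ of $\Phi(w)$ in terms of the heights $\eta_i$ and coheights $\overline{\eta}_i$ of $w$. The involution property is immediate from Remark \ref{rem:PhiTild}: since $\Phi(w)$ is the unique conjugate of $\tilde{w}$ lying in $D_n$, and since $\widetilde{\Phi(w)}=vu$ is itself a conjugate of $w=uv\in D_n$, the unique conjugate of $\widetilde{\Phi(w)}$ in $D_n$ must be $w$ itself.

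To derive the height formula I would exploit the fact that reversing a word corresponds to a $180^\circ$ rotation of its path about its midpoint. Since $u$ ends just after the $m$-th letter $a$ at height $\eta_m+1$, the height of $\tilde{u}$ at position $k$ is $(\eta_m+1)-h_{|u|-k}$, where $h_j$ denotes the height of $u$ at position $j$; evaluating at the $i$-th $a$ of $\tilde{u}$ (which is the $(m-i+1)$-th $a$ of $u$) yields height $\eta_m-\eta_{m-i+1}$ just before it. A parallel analysis of $\tilde{v}$, adjusted by the offset $\eta_m+1$ picked up from $\tilde{u}$ under concatenation, gives $\eta'_{m+j}=\eta_m-1-\eta_{n-j}$. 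Comparison with the coheight formula then produces
\[
\eta'_i \ = \ \overline{\eta}_{\sigma(i)},
\]
where $\sigma$ is the involution of $\{1,\ldots,n-1\}$ that reverses $\{1,\ldots,m\}$ and $\{m+1,\ldots,n-1\}$ separately. In particular $(\eta'_i)$ and $(\overline{\eta}_i)$ form the same multiset, so together with the preceding Lemma,
\[
area(\Phi(w)) \ = \ \sum_i\eta'_i \ = \ \sum_i\overline{\eta}_i \ = \ \rho(w).
\]

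For the $\mathrm{dinv}$ identity I would verify that $(i,j)\mapsto\{\sigma(i),\sigma(j)\}$ puts dinv-pairs in bijection, by a case analysis on the positions of $i$ and $j$ relative to $m$. When $i<j$ lie in a common block ($\{1,\ldots,m\}$ or $\{m+1,\ldots,n-1\}$), $\sigma$ reverses their order while the two additive constants in the coheight formula coincide, so after relabelling the sorted image as $(k,\ell)$ the condition $\eta'_i=\eta'_j$ becomes $\eta_k=\eta_\ell$ and $\eta'_j=\eta'_i-1$ becomes $\eta_\ell=\eta_k-1$. In the crossed case $i\le m<j$, the two constants differ by one, which interchanges the two dinv conditions but still qualifies the pair on both sides. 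Hence the sets of dinv-pairs are in bijection, and $\mathrm{dinv}(\Phi(w))=\mathrm{dinv}(w)$.

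The main obstacle is the height computation itself: tracking the offset introduced when concatenating $\tilde{u}$ with $\tilde{v}$, and keeping the two shifts $\eta_m$ and $\eta_m-1$ of the coheight formula straight, is where care is needed. Once $\eta'_i=\overline{\eta}_{\sigma(i)}$ is established, the area equality is a direct multiset identity and the dinv equality reduces to the mechanical case-work above.
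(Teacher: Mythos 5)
Your proposal is correct and follows essentially the same route as the paper: the involution via the conjugate-of-reversal characterization in Remark \ref{rem:PhiTild}, the explicit height formula $\eta'_i=\eta_m-\eta_{m-i+1}$ (resp. $\eta_m-1-\eta_{n-i+m}$) showing the heights of $\Phi(w)$ are a rearrangement of the coheights of $w$ (the paper obtains it from Lemma \ref{lem:hautTilde}, you re-derive it by the rotation argument), and the same three-case bijection on pairs relative to $m$ for $\mathrm{dinv}$. No gaps; your index bookkeeping in the second block is in fact cleaner than the paper's.
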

Before proving this Proposition we need to compare  the sequence of heights in a word $w$ and that of the word $\tilde{w}$

\begin{lemma}
\label{lem:hautTilde}
Let $w$ be any word on $\{a,b\}^*$  containing $n$ occurrences of the letter $a$,  $\eta_1, \eta_2, \ldots , \eta_n$
be its sequence of heights, and let $p = |w|_a - |w|_b$.  The sequence $\eta'_1, \eta'_2, \ldots , \eta'_n$ of heights
of $\tilde{w}$ is given by:  $$\eta'_{n-i+1} = p-1 -\eta_i.$$

\end{lemma}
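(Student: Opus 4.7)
The plan is a direct bijective accounting between positions in $w$ and in $\tilde{w}$. First I would observe that reading $\tilde{w}$ from left to right visits the letters of $w$ from right to left; in particular, the $i$-th occurrence of $a$ in $\tilde{w}$ corresponds to the $(n-i+1)$-th occurrence of $a$ in $w$, and the prefix of $\tilde{w}$ strictly preceding this $a$ is the reverse of the suffix of $w$ strictly following the $(n-i+1)$-th $a$ of $w$. Since reversal preserves letter counts, I may count $a$'s and $b$'s in the latter suffix of $w$ instead.

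Next I would count both. The prefix of $w$ stopping just before the $(n-i+1)$-th $a$ contains $n-i$ occurrences of $a$ by construction and, by the definition of height, exactly $(n-i)-\eta_{n-i+1}$ occurrences of $b$. Adjoining the $(n-i+1)$-th $a$ itself to this prefix, the complementary suffix therefore contains
\[
n-(n-i+1)=i-1
\]
occurrences of $a$, and $|w|_b-\bigl[(n-i)-\eta_{n-i+1}\bigr]$ occurrences of $b$. Using $|w|_b=|w|_a-p=n-p$, this last count simplifies to $i-p+\eta_{n-i+1}$.

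Subtracting the two counts gives
\[
\eta'_i=(i-1)-\bigl(i-p+\eta_{n-i+1}\bigr)=p-1-\eta_{n-i+1},
\]
which is the stated identity after the substitution $j=n-i+1$. The only step deserving caution is maintaining the convention that $w^{(i)}$ is the prefix \emph{strictly before} the $i$-th $a$: the $(n-i+1)$-th $a$ of $w$ itself must be assigned to the left part, not to the suffix, when doing the bookkeeping. With that convention fixed the argument is a two-line calculation and there is no genuine obstacle; sanity checks such as $w=(ab)^n$ (giving $p=0$ and $\eta'_i=-1$ throughout) confirm the sign.
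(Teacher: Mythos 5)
Your proposal is correct, and it is essentially the paper's argument: both proofs factor $w=uav$ at the relevant occurrence of $a$, observe $\tilde{w}=\tilde{v}a\tilde{u}$, and compare letter counts (the paper phrases the bookkeeping via $\delta(w)=\delta(u)+1+\delta(v)=p$, while you count the $a$'s and $b$'s explicitly, which is the same computation). Your care with the ``prefix strictly before the $i$-th $a$'' convention and the sanity check on $(ab)^n$ are fine but add nothing beyond the paper's proof.
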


\begin{proof}(of Lemma)
Let $w=uav$ be such that $u$ contains $i-1$ occurrences of the letter $a$, then 
$\eta_i = \delta(u)$ and since $\tilde{w} = \tilde{v}a \tilde{u}$, $|\tilde{v}a |_a = n-i+1$,
$\eta'_{n-i+1} =\delta( \tilde{v}) =\delta(v)$.

Computing $\delta(w)$ we have
$$ \delta(w) = \delta(u) +1 + \delta(v)=p$$
giving 
$$\delta(\tilde{v}) = p -1 - \delta(\tilde{u}).$$

\end{proof}
\begin{proof}(of Proposition \ref{prop:Phi})
Let $\eta_1, \ldots, \eta_m, \ldots, \eta_n$ be the sequence of heights of $w$, and $m$ be the largest integer such that $\eta_m$
is maximal among the $\eta_i$'s. Let $\eta'_i$ be the  sequence of heights in $\Phi(w)$.
Since $\eta'_m =  \eta_m -\eta_1=\eta_m$ and $\eta'_j \leq \eta_m-$ the largest  $j$ such that $\eta'_j$ is maximal 
among the $\eta'_i$ is equal to $m$. 

\medskip

 Denote $w = w'abw"$ where $|w'a|_a = m$ then $\delta(w'a) = \eta_m+1$ and $\delta(w") = -\eta_m$
Then by Lemma \ref{lem:hautTilde}, the sequence of heights $\eta'_1, \eta'_2, \ldots \eta'_m \ldots \eta'_n$
of $\Phi(w)$ is given by:
\begin{equation}
{\eta'}_i = \left\{
\begin{array}{ll}
\eta_m-\eta_{m-i +1}& \mbox{if $i \leq m$}\\
\eta_m + (-\eta_m -(\eta_{n-i +1+m}-\eta_m-1) ) &  \mbox{otherwise}
\end{array}
\right. 
\end{equation}
Since the second value is $\eta_m-1-\eta_{n-i+m+1}$,
this proves that the sequence of heights in $\Phi(w)$ is a rearrangement of the sequence of the coheights of $w$ proving the first part of  the Proposition.

\medskip
For the second part of the Proposition we consider an element $(i,j)$ in 
$DINV(w) $ and denote $a= \eta_i, b=\eta_j$;  in $\Phi(w)$, these values 
$a,b$ become $a',b'$ and are such that $\eta_{i'} = a'$ and  $\eta_{j'} = b'$. We consider three
cases depending on the values of $i$:
\begin{enumerate}
\item If $i,j \leq m$ then
$$a' = \eta_m -a, b'=\eta_m -b, i'= m-i+1, j'= m-j+1$$
so that $j' < i'$ and $(j',i') \in DINV(\Phi(w))$.

\item If $i,j > m$ then
$$a' = \eta_m-1 -a, b'=\eta_m-1 -b, i'= m+n-i+1, j'= m+n-j+1$$
so that $j' < i'$ and $(j',i') \in DINV(\Phi(w))$.

\item If $i \leq m$ and $j >m$ then $i' \leq m, j' >m$
$a'= \eta_m -a$ $b'=\eta_m-1 -b$.
So that $b'=a'-1$ if $a=b$ and $b'= a'$ if $a= b+1$.
Hence $(i',j') \in DINV(\Phi(w))$.
\end{enumerate}
\end{proof}

\subsection{Link with Haglund's function $\zeta$}
In  \cite[page 50]{haglund} J. Haglund introduces a mapping $\zeta$
on Dyck words and shows that this mapping keeps invariant the value of  $\mathrm{dinv}$ . We will give a
relationship between $\zeta$ and $\Phi$.

Before lets us recall how $\zeta(w)$ is build from the sequence of heights $\eta=\eta_1, \eta_2, \ldots \eta_n$ of $w$.
Let $k$ be the maximal values of the $\eta_i$, then the subsequences $\eta^{(0)}, \eta^{(1)}, \ldots, \eta^{(k)},\eta^{(k+1)}$
where $\eta^{(i)}$ contains all the occurrences of $i$ and $i-1$ in $\eta$ in the same order as they appear in $\eta$.
Hence $\eta^{(0)}$ contains only occurrences of $0$, and $\eta^{(k+1)}$ only occurrences of $k$.
Then $\zeta(w)$ is obtained by concatenating the words $\overline{\eta}^{(i)}$ obtained from the ${\eta}^{(i)}$'s 
replacing occurrences of $i$ by a and those of $i-1$ by $b$.

Example for $w = aabaabbabbaabaabbabb$ we have $\eta = (0,1,1,2,1,0,1,1,2,1)$, so that:
$${\eta}^{(0)}= (0,0), {\eta}^{(1)}= (0,1,1,1,0,1,1,1) ,  {\eta}^{(2)}= (1,1,2,1,1,1,2,1),  {\eta}^{(3)}= (2,2) $$
Giving 
$$\overline{\eta}^{(0)}= aa, \overline{\eta}^{(1)}=baaabaaa ,  \overline{\eta}^{(2)}= bbabbbab,  \overline{\eta}^{(3)}= bb$$
and $\zeta(w) = aabaaabaaabbabbbabbb$.

Consider the mapping $R$ on words consisting in writing them from the right to left and replacing $a$ by $b$ and $b$
by $a$ such that $R(w_1w_2 \cdots w_{n-1}w_n) =  \overline{w}_n\overline{w}_{n-1}\cdots \overline{w}_2  \overline{w}_1$ where $\overline{a} =b,
\overline{b}=a$. 

\begin{propo}

The mappings $\Phi$ and $\zeta$ satisfy
$R(\zeta(w) ) = \zeta(\Phi(w))$. 
\end{propo}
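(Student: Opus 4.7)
My plan is to reduce the identity to a computation on height sequences. Since a word in $D_n$ is uniquely determined by the sequence of heights attained just before each occurrence of $a$, it suffices to check that $R(\zeta(w))$ and $\zeta(\Phi(w))$ produce the same such sequence.

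First, I would make the block structure of $\zeta(w)$ explicit: writing $k = \max_i \eta_i$, $c_j = |\{i : \eta_i = j\}|$, and $B_j = \overline{\eta}^{(j)}$, we have $\zeta(w) = B_0 B_1 \cdots B_{k+1}$, and $B_j$ consists of $c_j$ letters $a$ and $c_{j-1}$ letters $b$ listed in the order the values $j$ and $j-1$ occur in $\eta$. Applying $R$ reverses block order and swaps letters inside each block, giving $R(\zeta(w)) = R(B_{k+1}) \cdots R(B_0)$, where $R(B_j)$ encodes the same occurrences read right-to-left with $a$ and $b$ exchanged. The starting and ending heights of each block telescope to the partial sums of the $c_j - c_{j-1}$, so one obtains an explicit expression for the heights of $R(\zeta(w))$ at its $a$-positions in terms of the positions of each value in $\eta$.

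On the other side, the computation in the proof of Proposition~\ref{prop:Phi} gives the height sequence $\eta'$ of $\Phi(w)$ as $\eta'_i = \eta_m - \eta_{m - i + 1}$ for $i \leq m$ and $\eta'_i = \eta_m - 1 - \eta_{N - i + m + 1}$ for $i > m$, where $N$ denotes the number of $a$'s in $w$. Hence occurrences of value $j$ in $\eta'$ correspond bijectively to occurrences of $\eta_m - j$ in the first $m$ positions of $\eta$ (read right-to-left) together with occurrences of $\eta_m - 1 - j$ in the remaining positions (also read right-to-left), and similarly for value $j-1$. Feeding $\eta'$ into the block-structure formula of the previous step yields the heights of $\zeta(\Phi(w))$ at its $a$-positions, and using $\eta_m = k$ one checks that they coincide with those of $R(\zeta(w))$.

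The main obstacle is bookkeeping: the block decompositions of $R(\zeta(w))$ and $\zeta(\Phi(w))$ do not coincide as factorizations of the common word---as the worked example already illustrates, the block lengths differ and the boundaries fall at different positions---so no block-by-block matching is possible. Instead, I would track the image of each individual entry $\eta_l$ through both composite maps: a value $\eta_l = i$ contributes one $a$ and one $b$ to two adjacent blocks on each side, and the verification reduces to checking that their absolute positions in the two concatenations agree. The split at position $m$ in the formula for $\eta'$ is precisely the source of the block-boundary mismatch, and must be reconciled by handling the cases $l \leq m$ and $l > m$ separately.
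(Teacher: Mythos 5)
Your reduction is legitimate (both sides are balanced words with $|w|_a$ occurrences of each letter, so comparing heights before each $a$, or equivalently letters position by position, does determine equality), and your key combinatorial input is exactly right: occurrences of a value $j$ in $\eta'$ are the occurrences of $k-j$ among $\eta_1,\ldots,\eta_m$ read right-to-left, followed by the occurrences of $k-1-j$ among $\eta_{m+1},\ldots,\eta_N$ read right-to-left. But the argument stops where the proposition begins: the decisive step is the phrase ``one checks that they coincide,'' and the positional bookkeeping you announce as the main obstacle is never carried out. Moreover, your diagnosis that ``no block-by-block matching is possible'' is only true for the full blocks $\overline{\eta}^{(j)}$ (indeed, in the worked example the block lengths are $2,8,8,2$ for $R(\zeta(w))$ versus $3,8,7,2$ for $\zeta(\Phi(w))$); it becomes false, and all the bookkeeping evaporates, once each block is refined at position $m$.

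That refinement is the paper's proof, and your own correspondence already yields it. Split $\overline{\eta}^{(j)}=\overline{\eta}'^{(j)}\overline{\eta}''^{(j)}$ into the letters coming from positions $\leq m$ and $>m$ of $\eta$, and similarly $\overline{\mu}^{(j)}=\overline{\mu}'^{(j)}\overline{\mu}''^{(j)}$ for the heights $\eta'$ of $\Phi(w)$. From your bijection, the $a$'s of $\overline{\mu}'^{(j)}$ are the values $k-j$ in positions $\leq m$ read right-to-left and its $b$'s the values $k-j+1$ there, so $\overline{\mu}'^{(j)}=R(\overline{\eta}'^{(k+1-j)})$; likewise $\overline{\mu}''^{(j)}=R(\overline{\eta}''^{(k-j)})$ (note the index shift between the two halves -- this is precisely the off-by-one interleaving you proposed to reconcile entry by entry). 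Since $\overline{\eta}''^{(k+1)}$ is empty ($m$ is the last position of height $k$), reversing the factorization $\zeta(w)=\overline{\eta}'^{(0)}\overline{\eta}''^{(0)}\cdots\overline{\eta}'^{(k+1)}\overline{\eta}''^{(k+1)}$ and comparing it factor by factor with $\zeta(\Phi(w))=\overline{\mu}'^{(0)}\overline{\mu}''^{(0)}\cdots\overline{\mu}'^{(k+1)}$ gives $R(\zeta(w))=\zeta(\Phi(w))$ with no computation of heights or absolute positions at all. So the missing piece of your proposal is exactly this half-block matching; with it stated and proved (two lines from your own correspondence), the plan closes, and without it the proof is not complete.
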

In the example above we have $\Phi(w) = aabaabbabbaabaabbbab$ which has for sequence of heights$(0,1,1,2,1,0,1,1,2,0)$
giving $\zeta(\Phi(w))= aaabaaabaabbbabbbabb$ which is equal to $R(\zeta(w))$.

\begin{proof}
Consider the sequence $\eta = \eta_1, \eta_2, \ldots, \eta_n$ of heights of $w$, let  $k$ be the maximal value of the $\eta_i$ and $m$ the largest integer such that $\eta_m = k$. It is convenient for the proof to divide the $\overline{\eta}^{(i)}$'s leading to the definition of $\zeta(w)$
 in two parts
$\overline{\eta}'^{(i)}$ and $\overline{\eta}"^{(i)}$ corresponding  respectively to $(\eta_1, \ldots , \eta_m)$ and to $(\eta_{m+1}, \ldots , \eta_n)$, this gives since $\overline{\eta"}^{(k+1)}$ is the empty word:

$$R(\zeta(w)) = R(\overline{\eta}')^{(k+1)}R(\overline{\eta}")^{(k)}R(\overline{\eta}'^{(k)})\cdots R( \overline{\eta}"^{(0)})
R(\overline{\eta}'^{(0)})$$

Consider now the sequence $\mu= \eta'_1, \eta'_2, \ldots, \eta'_m , \ldots, \eta'_n$ of heights of $\Phi(w) $, we divide also the 
sequences $\mu^{(i)}$ into to two sequences  $\mu'^{(i)}$ and $\mu"^{(i)}$ giving:

$$\zeta(\Phi(w)) = \overline{\mu}'^{(0)}\overline{\mu}"^{(0)} \overline{\mu}'^{(1)}\overline{\mu}"^{(1)}\cdots
\overline{\mu}'^{(k)}\overline{\mu}"^{(k)}\overline{\mu}'^{(k+1)}$$

We have seen in the Proof
of Proposition \ref{prop:Phi}, that

\begin{equation}
{h'}_i = \left\{
\begin{array}{ll}
\eta_m-\eta_{m-i +1}& \mbox{if $i \leq m$}\\
\eta_m - 1-\eta_{m+n-i}  &  \mbox{otherwise}
\end{array}
\right.
\end{equation}

But this exactly means that :
$\overline{\mu}'^{(i)} = R(\overline{\eta}'^{(k+1-i)})$ and
 $\overline{\mu}"^{(i)} = R(\overline{\eta}"^{(k-i)})$.
 
hence proving this Proposition.
\end{proof}
\newpage

\section{Labeling the cells  of a strip}
\label{sec:involution}

In this section we will represent a sorted  parking configuration $f$ of $K_n$ by a Dyck path in a strip
of the plane $\Z^2$ consisting of $n-1$ infinite rows of cells. The  word $\phi(f)$ will be drawn by a path in which 
north steps  correspond to  occurrences of the letter $a$ and east  steps  correspond to occurrences of the letter $b$.
This will allow to determine the rank and the degree of $f$ by counting some cells determined by the value $f_n$.

\medskip
We consider here a strip of $n-1$ rows in the plane  consisting of $n-1$ infinite rows of cells and 
we  label these  cells of this strip  with  the integers in $\Z$,   as follows: we start with the label $0$
given to the cell at the north-west of the origin, then we continue using labels from 1 to $n-2$  along the main diagonal (dashed in our Figure) 
until the line $y=n-1$ is reached; we repeat this labeling giving the label $k+n-1$ to the cell lying
on the west of the cell with label $k$ and continue in the north-east direction.
We also label the cells in the diagonals on the left of the main with negative integers one giving the label $k-(n-1)$ to the
cell on the east of one labelled $k$ so that the labels increase by 1 when allowing a diagonal in the north east direction. 
Notice that the cell at the north of a cell labeled $k$ has label $n+k$. Notice that this labeling process may be considered as an illustration of  the euclidian division by $n-1$, since 
the cell in row $i$ are those whose labels are  equal to $i-1 \mod (n-1)$.

\medskip

\begin{figure}[H]
\begin{center}
\begin{tikzpicture}[rotate=90,xscale=0.65,yscale=-0.65]
\draw[->,red, line width=2] (0,-4)  -> (0,13);
\draw[->,red, line width=2] (-1,0)  -> (12,0);
\draw[red] node at (11.5,0.7) {$y$};
\draw[red] node at (-0.5,12.5) {$x$};
\draw[black!30!white] (0,0) grid (10, 11);
\draw[line width = 1,dashed] (0,-0.2) -- (10, 9.98);
\draw[black!30!white](0, -4) grid (10, 0);
\foreach \x/\y/\color/\currentsink in {0/0/green/0,1/1/white/1,2/2/white/2,3/3/white/3,4/4/white/4,5/5/white/5,6/6/white/6,7/7/green/7,8/8/white/8,9/9/green/9,0/-1/green/10,1/0/green/11,2/1/white/12,3/2/white/13,4/3/white/14,5/4/white/15,6/5/white/16,7/6/green/17,8/7/green/18,9/8/green/19,0/-2/green/20,1/-1/green/21,2/0/green/22,3/1/green/23,4/2/white/24,5/3/white/25,6/4/green/26,7/5/white/27,8/6/white/28,
9/7/white/29,0/1/white/-10,1/2/white/-9, 2/3/white/-8, 3/4/white/-7,4/5/white/-6,5/6/white/-5,6/7/white/-4,7/8/white/-3,8/9/-2,9/10/white/-1,
9/11/white/-11,8/10/white/-12,7/9/white/-13,6/8/white/-14, 5/7/white/-15, 4/6/white/-16,3/5/white/-17,2/4/white/-18, 1/3/white/-19, 0/2/white/-20,
0/-3/white/30,1/-2/white/31,2/-1/white/32,3/0/white/33,4/1/white/34,5/2/white/35,6/3/white/36,7/4/white/37}{
\draw node[black] at (\x+0.5,\y-0.5) {\small $\currentsink$};
}

\end{tikzpicture}
\caption{Labels of the cells in the strip }
 \label{fig:calcRank}
\end{center}
\end{figure}
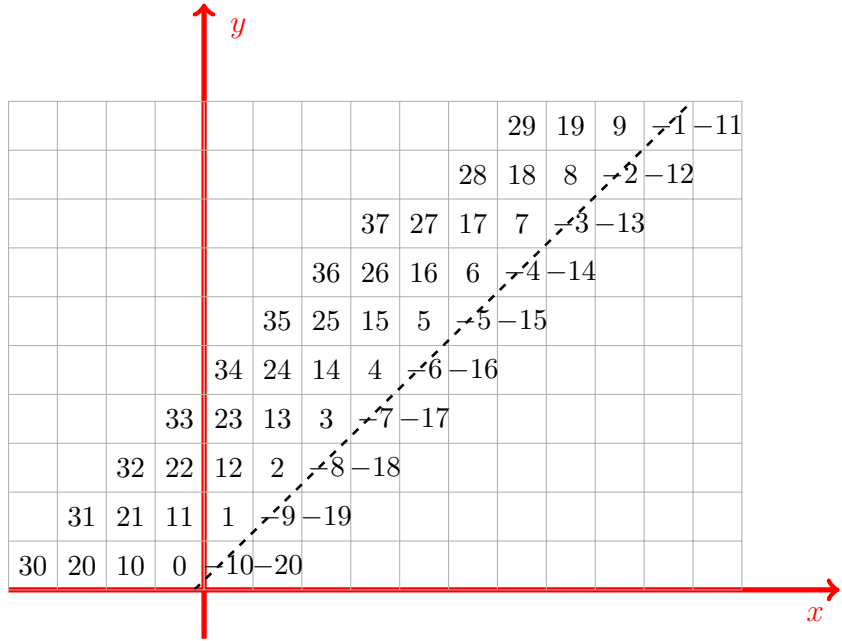
For the representation of the sorted parking configuration $f$ we 
 draw the Dyck path corresponding to $\phi(f)$ starting  at the origin and where a north step corresponds 
  to each occurrence of the letter $a$ and an east  step corresponds to each occurrence of the letter $b$.
Consider each row $i$ ($1 \leq i < n$), composed with the cells lying between the lines $y=i-1$ and $y=i$,
the value of $f_i$ is equal to the number of cells lying between the $y$-axis and the north step of the path in this row.
In addition, $\eta_i$ is the number of cells in row $i$ between the North step
of the Dyck path crossing this row and the main diagonal. Consider the euclidian quotient $q$ of 
$f_n+1$ by $n-1$ such that $f_n+1 = q(n-1) +r$ then the value
 $\max(0,q+\chi(i\leq r)-\eta_i)$ given in Theorem \ref{th:calculRangKnDyck} is the number of  these cells situated on the 
 left of the North step in row $i$ which labels are  less or equal to $f_n$.
 
 This fact is illustrated in Figure \ref{fig:calcRank}. 

\begin{figure}[H]
\begin{center}
\begin{tikzpicture}[rotate=90,xscale=0.65,yscale=-0.65]
\draw[->,red, line width=2] (0,-4)  -> (0,12);
\draw[->,red, line width=2] (-1,0)  -> (12,0);
\draw[red] node at (11.5,0.7) {$y$};
\draw[red] node at (-0.5,11.5) {$x$};
\draw[black!30!white] (0,0) grid (10, 11);
\draw[line width = 1,dashed] (0,0) -- (10, 10);
\draw[line width = 1,dashed,blue] (-1, 0)--(11, 0);
\foreach \x/\fx in {0/0,1/0,2/0,3/1,4/1,5/1,6/4,7/7,8/7,9/9}{
\draw node at (\x+0.5,12) {$\fx$};
}
\draw node at (11, 12){\tikz{\draw[line width = 2] (0,0) circle (0.3);\draw node at (0,0) {$26$};}};
\foreach \x/\fx/\fnx in {0/0/0,1/0/0,2/0/1,3/1/1,4/1/1,5/1/4,6/4/7,7/7/7,8/7/9,9/9/10}{
\draw[blue,line width=2] (\x,\fx) -- (\x+1,\fx);
\draw[blue,line width=2] (\x+1,\fx) -- (\x+1,\fnx);
}
\draw[black!30!white](0, -4) grid (10, 0);
\foreach \x/\y/\color/\currentsink in {0/0/white/0,1/1/white/1,2/2/white/2,3/3/white/3,4/4/white/4,5/5/white/5,6/6/white/6,7/7/green/7,8/8/white/8,9/9/green/9,0/-1/green/10,1/0/green/11,2/1/white/12,3/2/white/13,4/3/white/14,5/4/white/15,6/5/white/16,7/6/green/17,8/7/green/18,9/8/green/19,0/-2/green/20,1/-1/green/21,2/0/green/22,3/1/green/23,4/2/white/24,5/3/white/25,6/4/green/26}{
\draw node[black] at (\x+0.5,\y-0.5) {\small $\currentsink$};
}
\foreach \x/\y in {0/0,7/7,9/9,0/-1,1/0,7/6,8/7,9/8,0/-2,1/-1,2/0,3/1,6/4}{
\draw[opacity=0.6,fill=green,draw=green] (\x+0.5,\y-0.5) circle (0.4);
}
\foreach \x/\rx in {1/3,2/2,3/1,4/1,5/0,6/0,7/1,8/2,9/1,10/2}{
\draw[green!50!brown] node at (\x-0.5,-5) {$\rx$}; 
}
\draw[green!50!brown] node at (0-1,-4.5) {$=13$};
\draw[line width=2] (7, 10) -- (7, -5.5);
\draw[->,line width=2] (0,-5.7) -> (7,-5.7);
\draw node at (3.5,-7.2) {$r=7$};
\draw[->,line width=2] (10.3,9) -> (10.3,7);
\draw node at (11.2,8) {$q=2$};
\draw[blue,line width=2] (10, 11) -- (10, 10);

\end{tikzpicture}
\caption{Computation of the rank on a Dyck path}
 \label{fig:calcRank2}
\end{center}
\end{figure}
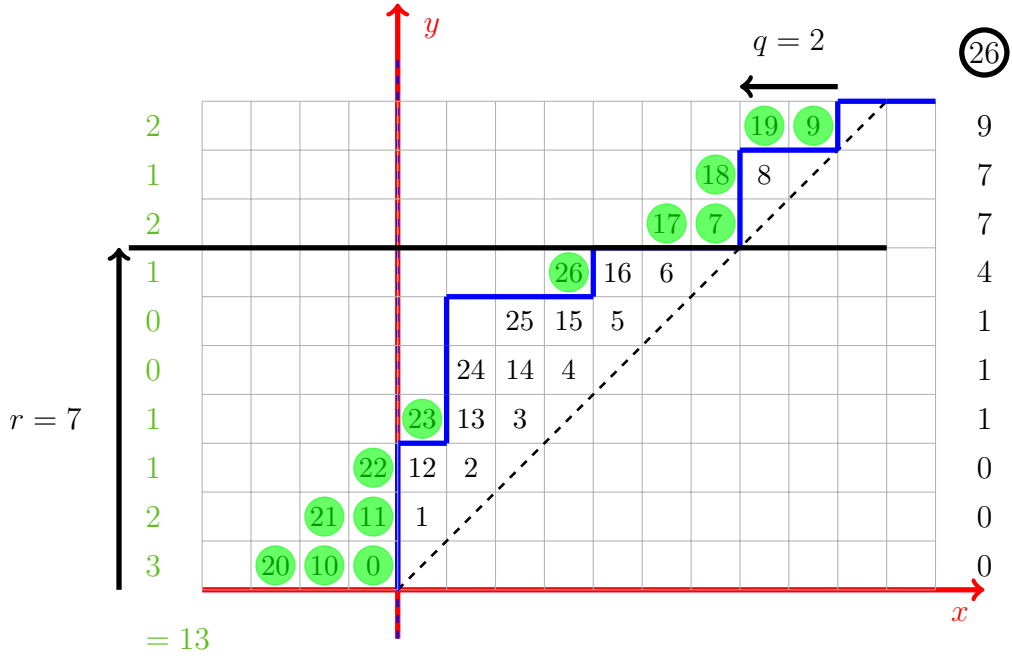

Our configuration $f$ is represented by the Dyck word
$aaabaaabbbabbbaabbabb$ and the value $f_n=26$, where $n=11$ for our
example.  
On the right of the Figure one can read the configuration $f$ from bottom to top, each value $f_i$
corresponds to the number of cells between the $y$-axis and the north step in the row $i$.
On the left are displayed the values of $q+ \chi(i \leq r) - \eta_i$ determined
by the cells at the left of the path lying which coordinates $(i,j)$ satisfy $i \geq j-1-q$ 
for $j \leq r$ and $ i \geq j-q$ for $j>r$. The number of these cells colored in green is 
equal to $\rho(f)+1$.

\begin{remark}
\label{rem:rankLeft}
 Theorem \ref{th:calculRangKnDyck} may be interpreted by stating that the rank $\rho(f)$
 is one less than the number of cells lying on the left of the Dyck path associated to $f$ and having a label not greater than $f_n$.
\end{remark}

\
The Dyck path separates the strip of height $n-1$  into two regions, one on the left of  and the other
on its right. Consider the following sets of integers associated to a Dyck path $w$.

\begin{definition}
For a   path $w$ in $D_n$ we define $\mathrm{lastright}(w)$  as the  highest label of a cell lying in the right region.
For any integer $s$, $\mathrm{left}(w,s)$ is the number of cells lying in the left region whose labels are not greater than $s$, and 
$\mathrm{right}(w,s)$ is the number of cells in the right region whose labels are strictly greater than $s$. 
\end{definition}

\begin{lemma}
\label{lem:pivot}
Let $w \in D_n$ be such that $w=uv$ where $u$ is the longest prefix of $w$ with maximal value of $\delta(u)$. Then the value of $\mathrm{last right}(w)$ is equal  to $n(q-p) -q -1$ where $p=|u|_a$ and $q=|u|_b$
\end{lemma}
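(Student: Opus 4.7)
\emph{Plan.}
I would split the proof into (a) exhibiting a candidate cell realising the claimed label, (b) verifying this cell lies in the right region, and (c) showing no cell of the right region has a strictly larger label.

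For (a), a direct computation with the labelling $L(x,y)=nx-(n-1)y$ gives
\[
L(n-2-p,\,n-1-q)=n(n-2-p)-(n-1)(n-1-q)=n(q-p)-q-1,
\]
so the cell $C:=(n-2-p,\,n-1-q)$ realises the lemma's target value. The remaining work is to locate $C$ in the right region and establish its maximality.

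For (b) I would use the hypothesis on $u$. Because $u$ is the \emph{longest} prefix of $w$ with $\delta(u)=p-q$ maximal, after reading $u$ the path is at the lattice point $(q,p)$, and the first letter of $v$ is forced to be $b$: if it were $a$, then $ua$ would be a strictly longer prefix with $\delta=p-q+1$, contradicting both the maximality and the longest-prefix assumption. So immediately after the pivot the path steps east from $(q,p)$ to $(q+1,p)$. Exploiting the period $(n-1,n)$ of the labelling on the skew cylinder of $D_n$ (the shift $(x,y)\mapsto (x+(n-1),\,y+n)$ preserves $L$), one locates $C$ as the ``mirror'' partner, across the cylinder period, of a cell lying immediately to the right of the pivot, and checks against the strip of height $n-1$ that this places $C$ on the right of the Dyck path.

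For (c) I would scan the right region row by row. In row $y$ the right region consists of cells $(x,y)$ lying west of the Dyck path's north step at $x_{y+1}=y-\eta_{y+1}$, so the maximum value of $L$ in that row is attained at the rightmost such $x$ and equals $nx_{y+1}-n-(n-1)y$. The hypothesis that $\delta(u)=p-q$ is the global maximum of $\delta$ translates to $\eta_i\le p-q$ for every $i$, with strict inequality for every row past the pivot (this is precisely where the ``longest'' part of the hypothesis is used; without it, a later prefix could tie the maximum, and a later row could contribute a label equal to $L(C)$ at a different cell). Optimising the row-wise bound over all rows of the strip then yields $L(x,y)\le n(q-p)-q-1$, with equality uniquely attained at $C$.

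The main difficulty I expect is step (c): matching the row-wise maxima against the global bound $n(q-p)-q-1$ is delicate precisely because $C$ sits far from the pivot $(q,p)$ (it lies at $y=n-1-q$, typically in the last accessible row, rather than adjacent to the pivot), and because one needs to use both $\delta(u)=p-q$ being maximal and $u$ being the longest such prefix in order to rule out other rows achieving a competing maximum of $L$.
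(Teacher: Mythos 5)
Your step (a) is only an arithmetic identity, and the real content of the lemma — locating the maximal-label cell of the right region — is handled in (b)--(c) with conventions that do not match the paper's. In the paper the label increases by $n-1$ for each step \emph{west} and by $n$ for each step north (so labels \emph{decrease} going east), and the right region is the part of the strip lying to the \emph{east} of the path; your labelling $L(x,y)=nx-(n-1)y$ increases eastward, and in (c) you even take the right region to be the cells \emph{west} of the north step. With the paper's definitions your candidate cell $C=(n-2-p,n-1-q)$ sits deep in the negative-label part of the right region and is nowhere near maximal, so the bound claimed in (c), $L\le n(q-p)-q-1$ with equality only at $C$, is simply false: for the word of Figure~\ref{fig:calcRank3} ($n=11$, $u=aaabaaa$, $p=6$, $q=1$) your formula gives $-57$, while the cell immediately east of the north step ending $u$ lies in the right region and has label $35$, which is the actual value of $\mathrm{lastright}(w)$. (Part of the confusion is in the statement itself: the expression $n(q-p)-q-1$ agrees with the paper's labelling only when $p,q$ are the letter counts of the complementary factor, which is how the lemma is invoked in the proof of Proposition~\ref{prop:parkingEquivMoinsU}; with $p=|u|_a$, $q=|u|_b$ as written, the value produced by the geometry is $n(p-1)-(n-1)(q+1)$.)

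The paper's proof is much more direct and you should compare your plan with it. Since labels decrease by $n-1$ at each step east, within each row $i$ the right-region cell of largest label is the one immediately east of the path's north step in that row, and its label is $n(i-1)-(n-1)(f_i+1)=(n-1)\eta_i+i-n$. Maximizing over rows, a larger height $\eta_i$ wins, and ties are broken in favour of the \emph{larger} row index $i$ (not, as in your sketch, by forcing strict inequality after the pivot); hence the maximum is attained in the row of the last north step reaching the maximal height, i.e.\ the last letter of the longest prefix $u$ with maximal $\delta$, and evaluating the label there gives the result. Your row-by-row scan is in spirit the same second half, but because of the reversed labelling and the misplaced candidate cell the optimization in (c) cannot be matched to the claimed value, so the proposal as written does not prove the lemma.
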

\begin{proof}
Notice that the   labels of the cells increase by $n$ while going from one cell to that at the top of it and by $n-1$ when going from one cell
to that on that at the left of it. Hence the cell situated at the right of the unitary path joining  $(x,y)$ to $(x, y+1)$ has label equal to 
$ny -(x+1)(n-1)$.
In each row the cell with highest label lying in the right region is stated immediately at the right of the north step of the path in the 
row. This label is higher in row $i$ than in row $j$ if the heights $\eta_i$ and $\eta_j$ satisfy $\eta_i > \eta_j$ or if
 $\eta_i = \eta_j$ and $i>j$, this shows that $\mathrm{last right}(w)$ is the label of the cell in the row corresponding to the longest  prefix of $w$
 with maximal height. 
\end{proof}
\begin{propo}
\label{prop:xy}
Let $f$ be  any sorted parking configuration  of $K_n$, and let $w = \phi(f)$ 
then:
\begin{equation}
\label{eq:lw}
\mathrm{left}(w, f_n) = \rho(u)+1
\end{equation}
and
\begin{equation}
\label{eq:rw}
\mathrm{right}(w,f_n) = {n-1 \choose 2} + \rho(f)- deg(f).
\end{equation}
\end{propo}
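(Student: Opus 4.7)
The plan is to compute, row by row, the number of left-region cells with label at most $f_n$ and the number of right-region cells with label strictly greater than $f_n$, then combine the two counts algebraically to recover both equations.

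Set up the computation following the notation of Theorem~\ref{th:calculRangKnDyck}: write $f_n + 1 = q(n-1) + r$ with $0 \leq r < n - 1$. By Lemma~\ref{lem:pivot}, the cell with bottom-left corner $(X, i-1)$ in row $i$ has label $(i-1)n - (X+1)(n-1)$; and since the $i$-th north step of the path is at column $f_i = (i-1) - \eta_i$, the left region in row $i$ consists of cells with $X < f_i$ and the right region of those with $X \geq f_i$. Setting $k = X+1$, the inequality label $\leq f_n$ translates to $(k - (i - 1 - q))(n-1) \geq i - r$. A case analysis on the sign of $i - r$ (noting that $|i-r| \leq n-2 < n-1$) shows that the smallest integer $k$ satisfying this is $i - 1 - q$ when $i \leq r$ and $i - q$ when $i > r$, i.e.\ $k \geq i - q - \chi(i \leq r)$. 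Combined with the left-region constraint $k \leq f_i$, the count of such cells in row $i$ equals $\max\bigl(0,\; q + \chi(i \leq r) - \eta_i\bigr)$. Summing over $i$ and invoking Theorem~\ref{th:calculRangKnDyck} yields \eqref{eq:lw}; this is the geometric rephrasing of Remark~\ref{rem:rankLeft}.

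The symmetric analysis for the right region, using the strict reverse of the same inequality, shows that the number of right-region cells in row $i$ with label strictly greater than $f_n$ equals $\max\bigl(0,\; \eta_i - q - \chi(i \leq r)\bigr)$. Writing $A_i := q + \chi(i \leq r) - \eta_i$, the two counts in row $i$ are $\max(0, A_i)$ and $\max(0, -A_i)$ respectively. Using the elementary identity $\max(0, A) - \max(0, -A) = A$ and summing over $i$ gives
\[
(\rho(f)+1) - \mathrm{right}(w, f_n) \;=\; \sum_{i=1}^{n-1} A_i \;=\; q(n-1) + r - \sum_{i=1}^{n-1} \eta_i \;=\; (f_n + 1) - \sum_{i=1}^{n-1}\eta_i.
\]
From $\eta_i = (i-1) - f_i$ (coming from the definition of $\phi_1$) I get $\sum_{i=1}^{n-1}\eta_i = \binom{n-1}{2} - (\deg(f) - f_n)$. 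Substituting this and simplifying yields $\mathrm{right}(w, f_n) = \binom{n-1}{2} + \rho(f) - \deg(f)$, which is \eqref{eq:rw}.

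The main care point is the correct handling of the integer inequality $(k - (i-1-q))(n-1) \geq i - r$ when $i - r$ can be zero, positive, or negative; producing the $\chi(i \leq r)$ correction from that case distinction is the only subtle step. Once the two row counts are in hand, Theorem~\ref{th:calculRangKnDyck} bypasses any separate evaluation of $\rho(f)$, and the max identity turns the passage from \eqref{eq:lw} to \eqref{eq:rw} into routine bookkeeping.
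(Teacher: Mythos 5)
Your proof is correct, and for the second identity it follows a genuinely different route than the paper. You derive everything from a single row-by-row analysis of the labels: in row $i$ the left-region cells with label at most $f_n$ number $\max(0,\,q+\chi(i\leq r)-\eta_i)$ and the right-region cells with label exceeding $f_n$ number $\max(0,\,\eta_i-q-\chi(i\leq r))$, after which Theorem~\ref{th:calculRangKnDyck} gives \eqref{eq:lw} and the identity $\max(0,A)-\max(0,-A)=A$ telescopes the two counts into \eqref{eq:rw}, uniformly in the sign of $f_n$. The paper instead takes \eqref{eq:lw} directly from Remark~\ref{rem:rankLeft} and obtains \eqref{eq:rw} by a global count: for $f_n\geq 0$ the right-region cells with nonnegative labels are exactly the $\sum_i \eta_i=\binom{n-1}{2}-deg(f)+f_n$ cells between the path and the main diagonal, from which one subtracts the $f_n+1-\mathrm{left}(w,f_n)$ of them whose labels lie in $[0,f_n]$, the case $f_n<0$ being handled separately. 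The paper's argument is shorter once the Remark is granted and ties $\mathrm{right}(w,f_n)$ directly to the area statistic of the Dyck word; yours avoids the case split on the sign of $f_n$, supplies the detailed threshold computation (the passage from the label inequality to $k\geq i-q-\chi(i\leq r)$) that the paper only sketches in the discussion around its figures, and produces an explicit per-row formula for the right count mirroring the rank formula. One small inaccuracy: your bound $|i-r|\leq n-2$ fails in the extreme case $i=n-1$, $r=0$, where $i-r=n-1$; this is harmless, since the conclusion that the smallest admissible $k$ is $i-q$ when $i>r$ only requires $i-r\leq n-1$, which always holds. (Note also that $\rho(u)$ in \eqref{eq:lw} should read $\rho(f)$, as you correctly assumed.)
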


\begin{proof}

  The first equaltion follows immediately from Remark \ref{rem:rankLeft}.
  We discuss two cases for the computation of parameter $\mathrm{right}$.

  If $f_n\geq 0$ the number of elements of the set of the right cells  with non negative labels
  is  equal to the area of the Dyck word $w$ which is equal to
  $$ \sum_{i=1}^{n-1} \eta_i =  {n-1\choose 2} - \sum_{i=1}^{n-1} f_i = {n-1\choose 2} - deg(f) +f_n$$
  Among these cells $(f_n +1 - \mathrm{left}(w,f_n))$ have labels not greater than $f_n$. This gives
  $$ \mathrm{right}(w,f_n) = {n-1\choose 2} - deg(f) +f_n -(f_n+1-\mathrm{left}(w,f_n)) = {n-1\choose 2} - deg(f) + \rho(f)$$

   If $f_n<0$, the set cells in the right region with label greater than $f_n$  are those the cells between the Dyck path  and the main diagonal,  and in addition all the
   cells with  negative labels $-1,-2,\ldots, f_n+1$ hence:
 $$\mathrm{right}(w,f_n) =\left({n-1 \choose 2} - (deg(f)-f_n) \right) -( f_n +1 ) $$
 as expected since in this case $\rho(f)=-1$.

\end{proof}
The calculation of $\rho(f)$ and $deg(f)$ is illustrated below with the same configuration $f$ but with $f=13$.
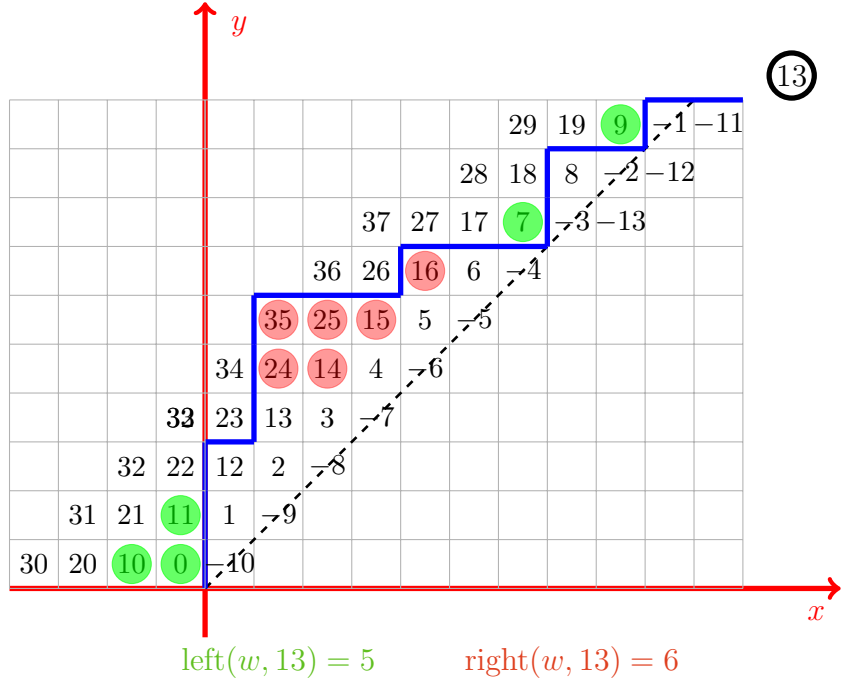
\begin{figure}[H]
\begin{center}
\begin{tikzpicture}[rotate=90,xscale=0.65,yscale=-0.65]
\draw[->,red, line width=2] (0,-4)  -> (0,13);
\draw[->,red, line width=2] (-1,0)  -> (12,0);
\draw[red] node at (11.5,0.7) {$y$};
\draw[red] node at (-0.5,12.5) {$x$};
\draw[black!30!white] (0,0) grid (10, 11);
\draw[line width = 1,dashed] (0,0) -- (10, 10);
\draw node at (10.5, 12){\tikz{\draw[line width = 2] (0,0) circle (0.3);\draw node at (0,0) {$13$};}};,
\foreach \x/\fx/\fnx in {0/0/0,1/0/0,2/0/1,3/1/1,4/1/1,5/1/4,6/4/7,7/7/7,8/7/9,9/9/10}{
\draw[blue,line width=2] (\x,\fx) -- (\x+1,\fx);
\draw[blue,line width=2] (\x+1,\fx) -- (\x+1,\fnx);
}
\draw[black!30!white](0, -4) grid (10, 0);
\foreach \x/\y/\color/\currentsink in {0/0/green/0,1/1/white/1,2/2/white/2,3/3/white/3,4/4/white/4,5/5/white/5,6/6/white/6,7/7/green/7,8/8/white/8,9/9/green/9,0/-1/green/10,1/0/green/11,2/1/white/12,3/2/white/13,4/3/white/14,5/4/white/15,6/5/white/16,7/6/green/17,8/7/green/18,9/8/green/19,0/-2/green/20,1/-1/green/21,2/0/green/22,3/1/green/23,4/2/white/24,5/3/white/25,6/4/green/26,7/5/white/27,8/6/white/28,
9/7/white/29,0/1/white/-10,1/2/white/-9, 2/3/white/-8, 3/4/white/-7,4/5/white/-6,5/6/white/-5,6/7/white/-4,7/8/white/-3,8/9/-2,9/10/white/-1,
9/11/white/-11,8/10/white/-12,7/9/white/-13,0/-3/white/30,1/-2/white/31,2/-1/white/32,3/0/white/32,3/0/white/33,4/1/white/34,5/2/white/35,6/3/white/36,7/4/white/37}{
\draw node[black] at (\x+0.5,\y-0.5) {\small $\currentsink$};
}
\foreach \x/\y in {0/0,7/7,9/9,0/-1,1/0}{
\draw[opacity=0.6,fill=green,draw=green] (\x+0.5,\y-0.5) circle (0.4);
}
\foreach \x/\y in {4/2,4/3,5/2,5/3,5/4,6/5}{
\draw[opacity=0.4,fill=red,draw=red] (\x+0.5,\y-0.5) circle (0.4);
}

\draw[green!50!brown] node at (-1.5,1.5) {$\mathrm{left}(w,13) =5$};
\draw[red!50!brown] node at (-1.5,7.5) {$\mathrm{right}(w,13) =6$};

\draw[blue,line width=2] (10, 11) -- (10, 10);

\end{tikzpicture}
\caption{Labels of the cells in the strip  and the computation of $\mathrm{left}$ and $\mathrm{right}$ for $f_n= 13$. }
 \label{fig:calcRank3}
\end{center}
\end{figure}

Baker and Norine's theorem applied to the configuration $f$ involves the computation of the rank for the configurations $f$ and $\kappa-f$.
The map $f\longrightarrow \kappa-f=\Psi(f)$ is an involution.

The sorted parking configuration toppling and permuting equivalent to any configuration $f\in K_n$ is described by $(\phi(f),\psi(f))$ in Definition~\ref{def:phi}.
   
See the appendix for an alternative approach. For the readers who skipped the preceding section we recall that for a word $w$ in $D_n$
we denote $\Phi(w)$ the conjugate of $ \tilde{w}$ which is an element of $D_n$. Hence $\Phi(w) = \tilde{u}\tilde{v}$, such that $w=uv$ and  $u$ is the longest
prefix of $w$ with maximal value by $\delta$.

\begin{propo}
\label{prop:parkingEquivMoinsU}
Let $f$ be any configuration on $K_n$ and $w= \phi(f)$, then
$\phi(\kappa-f) = \Phi(w)$.
Moreover $ \psi(\kappa-f) =  \mathrm{lastright}(w) -1-\psi(f)$

\end{propo}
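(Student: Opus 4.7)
The plan is to reduce both identities to a computation about cyclic conjugates of $\tilde w$. Let $g$ denote the sorted parking configuration equivalent to $f$, so that $w=\phi(f)=\phi_1(g)\in D_n$ and $\psi(f)=g_n$. Since every vertex of $K_n$ has degree $n-1$, $\kappa=(n-3,\dots,n-3)$, hence $\kappa-f\simil\kappa-g$, whose first $n-1$ entries $n-3-g_i$ are weakly decreasing. I would reverse these entries (which preserves both $\phi$ and $\psi$, since only the first $n-1$ coordinates are permuted, following the proof of Lemma~\ref{lem:permutPhi}) and then subtract $\Delta^{(n)}$ (which preserves $\simil$, hence $\phi$ and $\psi$), producing the configuration $h$ with $h_i=n-2-g_{n-i}$ for $i<n$ and $h_n=-2-g_n$. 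The bounds $0\le g_j\le n-2$ on a parking configuration guarantee that $h$ satisfies condition~(\ref{eqn:prePark}), so $\phi_1(h)$ is well-defined and $\phi(\kappa-f)=\phi(h)$, $\psi(\kappa-f)=\psi(h)$.

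For the first equation I would identify $\phi_1(h)$ as a specific cyclic conjugate of $\tilde w$. By Definition~\ref{def:phi1} the $i$-th letter $a$ of $\phi_1(h)$ is preceded by $h_i=n-2-g_{n-i}$ letters $b$, while a direct count in $\tilde w$ shows that its $i$-th $a$ is preceded by $n-g_{n-i}$ letters $b$, exactly two more. Since $w\in D_n$ ends in $bb$ for $n\ge 2$, $\tilde w$ begins with $bb$; cyclically moving these two initial $b$'s to the end of $\tilde w$ decreases each $a$'s $b$-prefix count by exactly $2$ and so reproduces $\phi_1(h)$. Hence $\phi_1(h)$ and $\tilde w$ share their cyclic conjugates, and by Remark~\ref{rem:PhiTild} together with Proposition~\ref{prop:uniqConj} the unique conjugate in $D_n$ is $\Phi(w)$, giving $\phi(\kappa-f)=\phi(h)=\Phi(w)$.

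For the second equation, let $u$ be the longest prefix of $w$ with maximal $\delta$ and set $p=|u|_a$, $q=|u|_b$. Applying Lemma~\ref{lem:cyclic} to $\tilde w$ together with the identity $\delta(\tilde w_k)=-1-\delta(w_{2n-1-k})$ shows that the shortest prefix of $\tilde w$ with minimal $\delta$ has length $2n-1-p-q$, containing $n-1-p$ letters $a$ and $n-q$ letters $b$. Composing this shift with the $bb$-shift from the previous paragraph yields a decomposition $\phi_1(h)=\alpha\beta$ with $\Phi(w)=\beta\alpha$, $|\alpha|_a=n-1-p$ and $|\alpha|_b=n-q-2$. Remark~\ref{rem:calculConjugue} then gives $\psi(\kappa-f)=h_n+n(|\alpha|_b-|\alpha|_a)-|\alpha|_b$; substituting $h_n=-2-\psi(f)$ and expanding reduces the right-hand side to $\mathrm{lastright}(w)-1-\psi(f)$ by Lemma~\ref{lem:pivot}.

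The main technical care lies in this last step: composing the two cyclic shifts of $\tilde w$ (first the $bb$-shift that identifies $\phi_1(h)$ with a conjugate of $\tilde w$, then the cyclic-lemma shift that extracts the $D_n$-conjugate) and correctly matching $|\alpha|_a$ and $|\alpha|_b$ with the parameters $p,q$ of Lemma~\ref{lem:pivot}, then applying Remark~\ref{rem:calculConjugue} with the right sign conventions. Once $\phi_1(h)$ is identified as the $bb$-shift of $\tilde w$, everything else is a direct calculation.
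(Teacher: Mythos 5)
Your proof is correct and follows essentially the same route as the paper's: your $h$ is the paper's $g'$, your identification of $\phi_1(h)$ as the two-letter cyclic shift of $\tilde{w}$ (hence a conjugate of $\tilde{w}$, finished off via Remark~\ref{rem:PhiTild} and Proposition~\ref{prop:uniqConj}) matches the paper's observation that $\phi_1(g')=\tilde{u}bb$ when $w=ubb$, and the computation of $\psi(\kappa-f)$ via Remark~\ref{rem:calculConjugue} together with Lemma~\ref{lem:pivot} is exactly the paper's second half, with your $\alpha$ playing the role of the paper's $v_1$. The only point to watch is that Lemma~\ref{lem:pivot} must be invoked, as the paper itself does, with its parameters equal to your $|\alpha|_a=n-1-p$ and $|\alpha|_b=n-q-2$, i.e.\ $\mathrm{lastright}(w)=n(|\alpha|_b-|\alpha|_a)-|\alpha|_b-1$ (not literally with the letter counts of the longest maximal prefix of $w$ as the lemma's wording suggests); with that reading your expansion does yield $\mathrm{lastright}(w)-1-\psi(f)$.
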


\begin{proof} 

Let $f'$ be the parking configuration toppling  equivalent to $f$, so that it satisfies  $w = \phi(f')$, let us denote
 $g$ the configuration $\kappa -f'$ then we have $g \simil \kappa -f$ and using $\kappa = (n-3, n-3, \ldots, n-3, n-3) \simil (n-2, n-2, \ldots, n-2, -2)$:
$$ g=(n-2-f'_1, n-2-f'_2, \ldots n-2-f'_{n-1}, -2-f'_n).$$

\medskip

Notice that, since if $i<n$ we have $0 \leq f'_i < n-1$, the $g_i$ satisfy $0 \leq g_i < n$ for $1 \leq i <n$. Moreover $f'_i  \leq f'_j$
if and only if $g_i \geq g_j$.
This last fact implies  that  the configurations $f"$ and $g'$ obtained by reordering the first $n-1$ values of 
$f'$ and $g$ respectively satisfy $g'_i = n-2-f"_{n-i}$.
A consequence is that  the words $w = \phi(f)=\phi_1 (f")$ and $v= \phi_1(g')$
are such that:
$$ w= ubb \hskip 0.2cm \mbox{\rm  and } \hskip 0.2cm v = \tilde{u}bb$$
Hence  $v$ is a conjugate of $\tilde{w} = bb\tilde{u}$.

\medskip

By Remark \ref{rem:PhiTild} we have that $\Phi(w)$ is the conjugate of $v$ which is in $D_n$,
and it   is also equal to $\phi(g')$ by Proposition \ref{prop:uniqConj}, since $v = \phi_1(g')$.
Now since $g'$ is obtained by permuting the first $n-1$ values of $g$ and $g \simil \kappa -f$ we have:
$$ \Phi(w) = \phi(g') =  \phi(\kappa -f).$$

\bigskip
We now compute $\psi(\kappa -f)$. We consider two configurations satisfying condition (\ref{eqn:prePark}) and 
use Remark \ref{rem:calculConjugue}. These two configurations are $g'$ and  the parking configuration $g"$  toppling equivalent to $g'$, 
indeed in each one the first $n-1$  values
are in weakly increasing order and they satisfy condition (\ref{eqn:prePark}). Moreover  their degrees are equal since they are 
toppling equivalent. Notice that $g'_n = g_n =  -2-\psi(f)$ and $g"_n = \psi(\kappa -f)$. In order to apply Remark \ref{rem:calculConjugue} we have
to compare $\phi_1(g')=v= \tilde{u}bb$ and $\phi_1(g") = \Phi(w)$ which are two conjugates word, the second one being
an element of $D_n$. Let $v_1$ be the smallest  prefix of $v$ which has minimal value by $\delta$ then:
$$v = v_1v_2 \hskip 0.5cm  \mbox{\rm and }  \hskip 0.5cm \Phi(w) = v_2v_1$$
Now denote $p= |v_1|_a$ and $q= |v_1|_b$, and by Remark \ref{rem:calculConjugue} we get:
$$ \psi(\kappa -f) = g"_n = g'_n + n(q-p) -q = -\psi(f)+n(q-p) -(q+2)$$

Using Lemma \ref{lem:pivot} we have

$$ \mathrm{lastright}(w) =n(q-p) -(q+1) $$
which ends the proof.

\end{proof}

\begin{coro}
The involution associating $(w,s)$ to $(w',s')$, where $w'= \Phi(w)$ and $s' =\mathrm{lastright}(w) - s-1$ is such that
$$ \mathrm{left}(w,s) =  \mathrm{right}(w',s') \hskip 0.5cm \mbox{\rm and} \hskip 0.5cm \mathrm{left}(w',s') =  \mathrm{right}(w,s)$$
\end{coro}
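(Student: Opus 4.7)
The plan is to interpret the combinatorial involution $(w,s)\mapsto(w',s')$ as the algebraic involution $f\mapsto\kappa-f$ on sorted parking configurations of $K_n$, and then read off the two equalities from the rank/degree formulas for $\mathrm{left}$ and $\mathrm{right}$ given in Proposition~\ref{prop:xy}, combined with the Riemann--Roch identity of Theorem~\ref{th:RR}.

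First, given any pair $(w,s)\in D_n\times\mathbb Z$, I would use Definition~\ref{def:phi1} and Proposition~\ref{prop:parkingFunction} to associate to $w$ the unique sorted parking configuration $f$ whose first $n-1$ entries are recovered by counting $b$'s in $w$, and set $f_n=s$. Then $\phi(f)=w$ and $\psi(f)=s$. By Proposition~\ref{prop:parkingEquivMoinsU}, the configuration $\kappa-f$ yields precisely $\phi(\kappa-f)=\Phi(w)=w'$ and $\psi(\kappa-f)=\mathrm{lastright}(w)-1-s=s'$. Hence the combinatorial involution on pairs is exactly the image under $(\phi,\psi)$ of the involution $f\mapsto\kappa-f$.

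Next, I would apply Proposition~\ref{prop:xy} to both $f$ and $\kappa-f$, obtaining the four identities
\begin{align*}
\mathrm{left}(w,s) &= \rho(f)+1, & \mathrm{right}(w,s) &= \tbinom{n-1}{2}+\rho(f)-\deg(f),\\
\mathrm{left}(w',s') &= \rho(\kappa-f)+1, & \mathrm{right}(w',s') &= \tbinom{n-1}{2}+\rho(\kappa-f)-\deg(\kappa-f).
\end{align*}
Using $\deg(\kappa-f)=2(m-n)-\deg(f)$ together with Riemann--Roch, i.e.\ $\rho(f)-\rho(\kappa-f)=\deg(f)+n-m$, a direct substitution gives
$$\rho(\kappa-f)-\deg(\kappa-f)=\rho(f)-\deg(f)+(n-m),\qquad \rho(\kappa-f)+1=\rho(f)-\deg(f)+(m-n+1).$$
Then, specializing to $K_n$ where $m=\binom{n}{2}$, one checks the two numerical identities $\binom{n-1}{2}+(n-m)=1$ and $m-n+1=\binom{n-1}{2}$, which immediately yield $\mathrm{right}(w',s')=\rho(f)+1=\mathrm{left}(w,s)$ and $\mathrm{left}(w',s')=\binom{n-1}{2}+\rho(f)-\deg(f)=\mathrm{right}(w,s)$.

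There is essentially no obstacle beyond bookkeeping: all substantive content is already established — Proposition~\ref{prop:parkingEquivMoinsU} identifies the involution on pairs, Proposition~\ref{prop:xy} converts the geometric counts into rank/degree data, and Theorem~\ref{th:RR} provides the symmetry that makes the roles of $\mathrm{left}$ and $\mathrm{right}$ swap. The only minor point worth being explicit about is that Proposition~\ref{prop:xy} was stated for the sorted parking representative, so one must note that $\mathrm{left}(w,s)$ and $\mathrm{right}(w,s)$ depend only on the pair $(w,s)$ itself, so replacing $f$ by any configuration in its toppling class does not affect these quantities.
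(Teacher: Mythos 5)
Your proposal is correct and follows essentially the same route as the paper's proof: identify the pair involution with $f\mapsto\kappa-f$ via Proposition~\ref{prop:parkingEquivMoinsU}, translate $\mathrm{left}$ and $\mathrm{right}$ into rank and degree via Proposition~\ref{prop:xy}, and conclude with Theorem~\ref{th:RR}. The only cosmetic difference is that you substitute to obtain both equalities directly, while the paper derives one and gets the other from the involution property.
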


\begin{proof}
We give here a short proof using Riemann-Roch theorem for graphs (i.e.  Theorem \ref{th:RR}) , a more direct proof is proposed in the appendix.
Let $f$ be a  configuration such that $\phi(f) = w$ and $\psi(f) = s$, this configuration may be supposed to be
a parking configuration such that its first $n-1$ values are sorted, hence we have $f_n=s$ and $w'= \phi(\kappa-f)$, $s'= \psi(\kappa-f)$.
Theorem \ref{th:RR} gives:
\begin{equation}
\rho(f) = \rho(\kappa -f) + deg(f) - {n \choose 2} +n
\end{equation}
Giving:
\begin{equation}
\label{eq:propSym}
\rho(\kappa -f) = \rho(f) - deg(f) +{{n-1} \choose 2} -1
\end{equation}
By the first part of Proposition \ref{prop:parkingEquivMoinsU} we have that $\rho(\kappa - f) =\mathrm{left}(w',s') - 1$ and that the right 
hand side of equation (\ref{eq:propSym}) is equal to $\mathrm{right}(w,s) -1$ this gives $\mathrm{left}(w',s')  = \mathrm{right}(w,s)$.
Using the fact that the mapping exchanging  $(w,s) $ and $(w',s')$ is an involution ends the proof.
\end{proof}

\section{Enumerative study of the  $(degree,rank)$ bistatistic on sorted parking configurations}
In this section, we are interested in the distribution of  the degrees and ranks on sorted configurations on $K_n$. 
For this purpose  we consider the following generating function
$$ K_n(d,r) = \sum_{f} d^{deg(u)}r^{\rho(f)}.$$

In this sum, $f$ runs over all parking configurations of $K_n$  such that $f_1 \leq f_2 \leq \ldots \leq f_{n-1}$.
Notice that this generating function is a formal sum   which  is a Laurent series,  since  $\rho(f)$ may be equal to $-1$ and $deg(u)$ may  be any integer in $\mathbb{Z}$.

Lorenzini \cite{lorenzini} consider a similar generating function summed over parking configurations, not necessarly sorted, and
call it the two variable zeta function of the underlying graph.  

In preceeding section, we remark two linear combinations of the rank and degree statistics are more natural with respect to the involution $f\longrightarrow \kappa-f$ appearing in Riemann-Roch theorem for graphs.
This leads to a change of the variables $r,d$ of $K_n(r,d)$ into variables $x,y$ of a new generating function 

$$ L_n(x,y)=\sum_{w \in D_n}\sum_ {s= -\infty}^{+\infty}  x^{\mathrm{left}(w,s)}y^{\mathrm{right}(w,s)}$$
 which is a more tractable power series in $x$ and $y$. 
In addition, the involution $\Psi$ in preceeding section shows that this new series satisfies  the symmetric relation  $$L_n(x,y)=L_n(y,x).$$

 Our main result is that via this ordering of summations $L_n(x,y)$
 may be described by a sum of geometric sums of two alternating ratio, which share as a common factor the 
 rational  fraction $H(x,y)=\frac{1-xy}{(1-x)(1-y)}$. 
 
We conclude this enumerative study by the summation on all these binomial words  that is related, up to a rescaling due to change of variables, to $\sum_{n\geq 1} K_n(d,r)z^{n-1}$ and appears to be a rational function in $x$,$y$,$z$ and two copies of a Carlitz $q$-analogue of Catalan numbers, each counting area below Dyck paths.

\subsection{Change of variable from $K_n(d,r)$ to $L_n(x,y)$}

The relation between the bistatistics $(deg,\rho)$ and
$(\mathrm{lw},\mathrm{rw})$ are reversible.  Hence the generating
function $K_n(d,r)$ is equivalent up to a change of variables to the
generating function $L_n(x,y)$ defined above.
\begin{coro}(of Proposition~\ref{prop:xy})
For any $n\geq 2$, we have 
\begin{equation}
K_n(d,r) = \frac{d^{{n-1 \choose 2}-1}}{r}L_n(\frac{1}{d},rd).
\end{equation}
\end{coro}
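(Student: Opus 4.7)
The plan is to obtain the identity as a routine change of variable, using Proposition~\ref{prop:xy} as the engine that converts the bistatistic $(\deg,\rho)$ on sorted parking configurations into $(\mathrm{left},\mathrm{right})$ on pairs $(w,s)$.

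First I would check the underlying indexing. A sorted parking configuration $f$ of $K_n$ is determined by its weakly increasing $(n-1)$-tuple $(f_1,\ldots,f_{n-1})$ — which, by Proposition~\ref{prop:parkingFunction}, corresponds bijectively via $\phi_1$ to the Dyck word $\phi(f)\in D_n$ — together with its sink value $f_n$, which ranges freely over $\mathbb{Z}$. Hence the sum defining $K_n(d,r)$ may be rewritten as a double sum over pairs $(w,s)\in D_n\times\mathbb{Z}$ with $w=\phi(f)$ and $s=f_n$, which is exactly the indexing set of $L_n(x,y)$.

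The second step is the substitution. Proposition~\ref{prop:xy} supplies $\rho(f)=\mathrm{left}(w,s)-1$ and, after eliminating $\rho(f)$ from the second identity there, $\deg(f)=\binom{n-1}{2}-1+\mathrm{left}(w,s)-\mathrm{right}(w,s)$. Plugging these into $d^{\deg(f)}r^{\rho(f)}$ and factoring out the $(w,s)$-independent prefactor gives
\begin{equation*}
d^{\deg(f)}\,r^{\rho(f)} \;=\; \frac{d^{\binom{n-1}{2}-1}}{r}\,(dr)^{\mathrm{left}(w,s)}\,(1/d)^{\mathrm{right}(w,s)},
\end{equation*}
so summing over $(w,s)$ yields $K_n(d,r)=\frac{d^{\binom{n-1}{2}-1}}{r}\,L_n(dr,\,1/d)$. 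One then appeals to the symmetry $L_n(x,y)=L_n(y,x)$ (a consequence of the involution $\Psi$ discussed around Proposition~\ref{prop:parkingEquivMoinsU}) to rewrite this as $\frac{d^{\binom{n-1}{2}-1}}{r}\,L_n(1/d,\,rd)$, matching the claimed form.

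There is no real obstacle here, since both ingredients — the bijection $f \leftrightarrow (w,s)$ and the dictionary between the two bistatistics — are already in place. The only mild caveat is that, because $f_n$ ranges over all of $\mathbb{Z}$, $K_n(d,r)$ must be read as a formal Laurent series rather than a polynomial; this is precisely how the generating function was defined in the preceding paragraph, so the termwise substitution is legitimate.
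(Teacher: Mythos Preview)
Your argument is correct and is exactly the change-of-variables the paper has in mind; the paper itself gives no explicit proof beyond pointing to Proposition~\ref{prop:xy}. Your observation that the direct substitution yields $L_n(dr,1/d)$ and that one must invoke the symmetry $L_n(x,y)=L_n(y,x)$ to match the stated form $L_n(1/d,rd)$ is a genuine detail the paper glosses over, so your write-up is in fact slightly more careful than the original.
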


\subsection{Sums on all the $D_n$.}

Our first finite description of $L_n(x,y)$ requires some additional definition on words.
For any word $f$ on the alphabet $\{a,b\}$ we define its weight $W(f)$ which is a monomial in $x$ and $y$ as follows.
Consider the heights $\eta_1, \eta_2, \ldots \eta_p$ of the prefixes of $f$ followed by the occurrence of a letter $a$ and let 
$\alpha = \sum_{\eta_i\geq 0} (\eta_i+1)$, $\beta = \sum_{\eta_i <0} (-\eta_i -1)$ then $W(f) = x^{\alpha}y^{\beta}$.

\begin{propo}\label{prop:toxy}
Let $A_n$ be the set of words having $n$ occurrences of the letter $b$ and $n-1$ occurrences of the letter $a$.
For any $n\geq 2$, we have 
$$ L_n(x,y) =  H(x,y) \left( \sum_{bfb\in A_n} W(bfb)  - \sum_{afa\in A_n} W(afa) \right)$$
where
$$ H(x,y)=\frac{(1-xy)}{(1-x)(1-y)}=1+\sum_{i\geq 1} (x^i+y^i).$$
\end{propo}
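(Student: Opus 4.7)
The plan is to first isolate the factor $H(x,y)$ by manipulating the inner sum over $s$ for a fixed Dyck word $w\in D_n$, and then to identify the remaining finite sum with $P_n(x,y)=\sum_{bfb\in A_n}W(bfb)-\sum_{afa\in A_n}W(afa)$ via a weight-preserving bijection based on the Cyclic Lemma (Lemma~\ref{lem:cyclic}).

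For fixed $w\in D_n$, write $T(s)=x^{\mathrm{left}(w,s)}y^{\mathrm{right}(w,s)}$ and $G_w=\sum_s T(s)$. The transition rules are immediate from the definition: $T(s+1)=xT(s)$ if $s+1\in L_w$ (the cell with label $s+1$ is left of the path), and $T(s+1)=T(s)/y$ if $s+1\in R_w$. Writing $(1-y)G_w=\sum_s\bigl(T(s)-yT(s+1)\bigr)$, the $s+1\in R_w$ terms vanish, while each $s+1\in L_w$ term contributes $(1-xy)T(s)$. Using $T(s+1)=xT(s)$ again, this reduces to
$$(1-y)G_w=\frac{1-xy}{x}\sum_{s\in L_w}T(s),\qquad\text{hence}\qquad G_w=H(x,y)\cdot\frac{1-x}{x}\sum_{s\in L_w}T(s).$$
The symmetric manipulation (using $T(s)=yT(s+1)$ when $s+1\in R_w$) gives the equivalent form $G_w=H(x,y)(1-y)R_w^{*}$ with $R_w^{*}=\sum_{s\in R_w}T(s)$. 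Summing over $w$ yields $L_n(x,y)=H(x,y)\cdot(1-y)\sum_{w\in D_n}R_w^{*}$, so it suffices to establish $(1-y)\sum_w R_w^{*}=P_n(x,y)$.

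Next I expand $(1-y)R_w^{*}=R_w^{*}-yR_w^{*}$. Since $yR_w^{*}=\sum_{s+1\in R_w}T(s)$, the telescoping difference isolates only the ``transitional'' indices, giving
$$(1-y)R_w^{*}=\sum_{s\in R_w,\ s+1\in L_w}T(s)\;-\;\sum_{s\in L_w,\ s+1\in R_w}T(s).$$
The central claim is a bijection: across all $w\in D_n$, the R-to-L transitions $(w,s)$ correspond (by cyclic rotation of $w$) to words of the form $bfb\in A_n$, and L-to-R transitions correspond to words $afa\in A_n$. Concretely, each position $k\in\{1,\dots,2n-2\}$ with $w_kw_{k+1}=bb$ yields the cyclic shift $g=w^{(k)}$ which starts and ends with $b$, and the Cyclic Lemma guarantees that each $bfb\in A_n$ arises uniquely this way; symmetrically for $aa$ positions and $afa$ words. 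Counting: the number of R-to-L transitions in $w$ is $\sum_{i=1}^{n-1}\max(0,\eta_i-\eta_{i+1})$ (with the convention $\eta_n=-1$ for the wrap-around from row $n-1$ to row $1$), which using $r_i=1+\eta_i-\eta_{i+1}$ equals $n-m$, where $m$ is the number of maximal $b$-runs in $w$---precisely the number of $bb$-positions.

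The main obstacle is verifying the weight identity $T(s)=W(w^{(k)})$ under this correspondence. Writing $s=q(n-1)+r$ with $0\le r\le n-2$, and using the explicit formulas
$$\mathrm{left}(w,s)=\sum_{i=1}^{n-1}\max(0,q+\chi(i\le r+1)-\eta_i),\qquad \mathrm{right}(w,s)=\sum_{i=1}^{n-1}\max(0,\eta_i-q-\chi(i\le r+1))$$
derived as in Theorem~\ref{th:calculRangKnDyck}, one recognises that $T(s)=\prod_i x^{(\tilde\eta_i+1)^+}y^{(-\tilde\eta_i-1)^+}$ with $\tilde\eta_i:=q+\chi(i\le r+1)-\eta_i-1$. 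It remains to check that the multiset $\{\tilde\eta_i\}_{i=1}^{n-1}$ equals the multiset of heights of the cyclic shift $w^{(k)}$ associated to the transition; since the weight $W$ depends only on this multiset, the identity $T(s)=W(w^{(k)})$ follows. This matching of multisets is the technical heart of the proof, handled by tracking how the height sequence of $w$ transforms under the cyclic rotation at position $k$, with the two cases $r<n-2$ and $r=n-2$ corresponding respectively to $bb$-positions inside a run and $bb$-positions that straddle the wrap of the strip. Combining the two bijections then yields $(1-y)\sum_w R_w^{*}=P_n(x,y)$, completing the proof.
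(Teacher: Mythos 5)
Your first half is correct and complete: the telescoping manipulations with $(1-x)$ and $(1-y)$ applied to $G_w=\sum_s T(s)$ are a clean (and slightly slicker) way of doing exactly what the paper does in Lemma~\ref{lem:fromuleCroisements} by splitting the label sequence into maximal left/right runs and summing geometric series; both routes reduce $G_w$ to $H(x,y)$ times the alternating sum of the monomials at the R-to-L and L-to-R transition cells. Up to that point nothing is missing.

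The genuine gap is in the second half, which is precisely the content of the paper's Lemma~\ref{lem:conjuguerPourFacteurxx}: you never prove the weight identity $T(s)=W(w^{(k)})$, you only reduce it to ``the multiset $\{\tilde\eta_i\}$ equals the multiset of heights of the cyclic shift associated to the transition'' and then declare this matching to be ``the technical heart \ldots handled by tracking how the height sequence transforms''. That tracking is the proof, and it is omitted. Worse, the correspondence it relies on is not pinned down: you count transitions and count $bb$-positions and note the totals agree, but a cardinality argument does not tell you \emph{which} transition label $s$ is attached to \emph{which} factor $w_kw_{k+1}=bb$, so the phrase ``the cyclic shift $w^{(k)}$ associated to the transition'' is undefined and the multiset identity cannot even be stated, let alone checked. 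The paper fixes this by identifying the transition cell geometrically (the cell just below the first of two consecutive East steps, resp.\ just left of the first of two consecutive North steps), defining the conjugate $\tau$ at that letter, and then verifying $T(s)=W(\tau(\cdot))$ row by row: in each row the left/right contribution is $\max(0,\pm(i_1-i_2))$ or $\max(0,\pm(i_1-i_2-1))$ depending on whether the row lies above or below the crossing, and these are exactly the (shifted) heights of the conjugate word. Your reduction via the explicit formulas of Theorem~\ref{th:calculRangKnDyck} and the observation that $W$ depends only on the height multiset is a reasonable framework for such a verification, and the global summation over $w$ via the Cyclic Lemma (Lemma~\ref{lem:cyclic}) is correctly set up; but as written the crucial local identity is asserted, not established, so the proof is incomplete.
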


\subsubsection{Mixing geometric sums delimited by $aa$ and $bb$ factors of the word $wb$}

The first step in the proof of this proposition sums the bistatistic
$(\mathrm{lw},\mathrm{rw})$ on all cells of a fixed cut skew cylinder
$\mathrm{Cyl}[w]$, here defined by a Dyck word $w$ of size $n-1$.
A cell with label $k<0$,  is in the  right region.
If the cell has  label $k$ satisfying  $k>1+n(n-3)$, then  it is at the left of the triangle $T(n)$ of corners $(0,0)$, $(n-1,0)$ and $(n-1,n-1)$ which contains the $w$ cut path hence it is  in the left region.
We thus know that there is an odd number $2m+1$ ($m\geq 0$) of indices $k_1\leq k_2 \leq \ldots k_{2m+1}$  such that 
the cells with labels $k_i$ and $k_i+1$ are in different regions. 
In other words the finite sequence $k_i$ for $ {i=1\ldots 2m+1}$ collects the labels of the cells just before   a crossing of the path $w$.
On the example in Figure~\ref{fig:calcRank3}, $m=5$ and $k_1,k_2,\ldots k_{2m+1} = -1, 0,6,7,8,11,16, 23, 25,34,35.$  
We will call the  integer $m$, the \emph{crossing multiplicity} and the sequence $k_1,\ldots k_{2m+1}$ the \emph{crossing indices} of the Dyck word $w$. 

\begin{lemma}
\label{lem:fromuleCroisements}
Let $w$ be a Dyck word, we consider the sequence ${k\in\mathbb{Z}}$ in the cut skew cylinder $\mathrm{Cyl}[w]$, $m$ its crossing multiplicity and $k_1,\ldots k_{2m+1}$ its crossing indices. For any $k \in \mathbb{Z}$ consider the cell with label $k$ and  denote
$M(k)$ the  monomial $x^{rw(k)}y^{lw(k)}$
We have 
$$ \sum_{k\in\mathbb{Z}} M(k)  = H(x,y)\left( \sum_{i=0}^m M(k_{2i+1})-\sum_{i=1}^m M(k_{2i})\right).$$
\end{lemma}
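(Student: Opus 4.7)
The plan is to compute $\sum_{k\in\mathbb{Z}} M(k)$ directly by partitioning $\mathbb{Z}$ into the maximal intervals on which all cells belong to the same region. On each such interval the sequence $M(k)$ is geometric, so each sum becomes a finite or infinite geometric series, and the telescoping of endpoints produces the claimed formula.

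First I would record the local update rule for $M$. Directly from the definitions of $\mathrm{left}(w,\cdot)$ and $\mathrm{right}(w,\cdot)$, if the cell labeled $k+1$ lies in the left region then $\mathrm{left}(w,k+1)=\mathrm{left}(w,k)+1$ and $\mathrm{right}(w,k+1)=\mathrm{right}(w,k)$, so $M(k+1)=x\,M(k)$; if instead it lies in the right region then $\mathrm{left}$ is unchanged and $\mathrm{right}$ drops by one, so $M(k+1)=M(k)/y$. Next, using that cells with very negative label are in the right region and cells with sufficiently large label are in the left region, the crossing indices $k_1<k_2<\cdots<k_{2m+1}$ split $\mathbb{Z}$ into an initial infinite R-interval $(-\infty,k_1]$, alternating finite intervals $(k_{2i-1},k_{2i}]$ (in L) and $(k_{2i},k_{2i+1}]$ (in R) for $i=1,\ldots,m$, and a final infinite L-interval $(k_{2m+1},+\infty)$.

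Then I would compute the sum on each piece. On an R-interval $(k_{2i},k_{2i+1}]$, the relation $M(k_{2i})=y^{k_{2i+1}-k_{2i}}M(k_{2i+1})$ gives
\[
\sum_{k=k_{2i}+1}^{k_{2i+1}} M(k)\;=\;M(k_{2i+1})\frac{1-y^{k_{2i+1}-k_{2i}}}{1-y}\;=\;\frac{M(k_{2i+1})-M(k_{2i})}{1-y},
\]
and similarly the initial infinite R-interval contributes $M(k_1)/(1-y)$. On an L-interval $(k_{2i-1},k_{2i}]$, using $M(k_{2i})=x^{k_{2i}-k_{2i-1}}M(k_{2i-1})$, I get $\frac{x(M(k_{2i-1})-M(k_{2i}))}{1-x}$, while the final infinite L-interval contributes $xM(k_{2m+1})/(1-x)$. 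Adding the R-contributions and separately the L-contributions, the intermediate endpoints telescope and both totals equal
\[
\Bigl(\text{prefactor}\Bigr)\cdot \Bigl(\sum_{i=0}^m M(k_{2i+1})-\sum_{i=1}^m M(k_{2i})\Bigr),
\]
with prefactor $\frac{1}{1-y}$ for R and $\frac{x}{1-x}$ for L. The conclusion then follows from the identity
\[
\frac{1}{1-y}+\frac{x}{1-x}\;=\;\frac{1-xy}{(1-x)(1-y)}\;=\;H(x,y).
\]

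The main obstacle is essentially bookkeeping: correctly identifying which region each $k_i+1$ enters (so the signs in the telescoped difference $M(k_{2i+1})-M(k_{2i})$ come out right) and checking that the geometric sums on the two infinite intervals are well-defined formal Laurent series. Because the cells far to the negative side contribute monomials of arbitrarily large $y$-degree and the cells far to the positive side contribute monomials of arbitrarily large $x$-degree, the formal geometric series $\sum_{j\geq 0}y^j$ and $\sum_{j\geq 0}x^j$ legitimately represent these tails, and the identification of the two telescoped sums as identical is what allows the common factor $H(x,y)$ to be pulled out.
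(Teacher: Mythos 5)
Your proof is correct in structure and follows essentially the same route as the paper's: split $\mathbb{Z}$ at the crossing indices into maximal runs lying in a single region, sum the geometric progression on each run (including the two infinite tails, which are legitimate formal series), telescope the endpoint monomials, and add the two prefactors to obtain $H(x,y)$. The one slip is a consistent interchange of $x$ and $y$ in your local rules: with the stated definition $M(k)=x^{rw(k)}y^{lw(k)}$, passing to a cell of the left region multiplies $M$ by $y$ (not $x$), and passing to a cell of the right region divides $M$ by $x$ (not $y$); consequently the initial right tail contributes $M(k_1)/(1-x)$, the final left tail contributes $y\,M(k_{2m+1})/(1-y)$, and the two prefactors to be added are $\frac{1}{1-x}$ and $\frac{y}{1-y}$. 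Since $\frac{1}{1-x}+\frac{y}{1-y}=\frac{1-xy}{(1-x)(1-y)}=H(x,y)$ and $H$ is symmetric in $x$ and $y$, your telescoped conclusion is unaffected, but the intermediate identities as written do not hold under the paper's convention and should be corrected accordingly.
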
   

\begin{proof}
Observe first the two   following local facts:
\begin{itemize}
\item 
If $k$ is the label of a cell in the left region, then $lw(k) = lw(k-1)+1$ and $rw(k) = rw(k-1)$, so that:
$$  x^{rw(k)}y^{lw(k)} = y\times x^{rw(k-1)}y^{lw(k-1)}$$
\item 
If $k$ is the label of a cell in the right region, then $lw(k) = lw(k-1)$ and $rw(k) = rw(k-1) - 1$ so that 
$$x^{rw(k)}y^{lw(k)} = \frac{1}{x}\times x^{rw(k-1)}y^{lw(k-1)}.$$
\end{itemize}

\bigskip

 Let $a < b$,   assume that for any $k$ such that $a+1 \leq k \leq b$, the cell  with label $k$  lies in the  left region, then
 a repeated use of the first observation shows that:
$$\begin{array}{lcl}
 \displaystyle \sum_{k=a+1}^{b} M(k) & = & \displaystyle \sum_{k=1}^{b-a} y^{k}M(a)\\
\ &  = & \displaystyle {\frac{y-y^{b-a+1}}{1-y}}M(a)\\
\ &  = & \displaystyle \frac{y}{1-y}\left( M(a)-M(b)\right) \\
\end{array}$$
where in the first equality we use  that each  cell with label $k$ lies the  left region for $k=a+1, \ldots,  b$ and  the third equality uses  $M(b) = y^{b-a} M(a)$.

In the particular case where $b=+\infty$ the term $M(b)$ vanishes, so that:
$$  \sum_{k=a+1}^{\infty} M(k) = \frac{yM(a)}{1-y}$$

On the other hand , assume that for $k=a+1,\ldots, b$ each cell with label $k$ is in the  right region, similarly we have:
$$\begin{array}{lcl}
 \displaystyle \sum_{k=a+1}^{b} M(k) & = & \displaystyle \sum_{k=1}^{b-a} \left(\frac{1}{x}\right)^{k}M(a) \   =  \displaystyle \sum_{k=0}^{b-a-1} x^{k} M(b) \\
\ &  = & \displaystyle  \frac{1-x^{b-a}}{1-x}M(b)\\
\ &  = & \displaystyle \frac{1}{1-x}\left( M(b) - M(a)\right)\\
\end{array}$$

In the particular case where $a=-\infty$, the terms $M(a)$ vanishes so that:
$$  \sum_{k=-\infty}^{b} M(k) = \frac{M(b)}{1-x}$$

We split the sequence of cells with labels ${k\in\mathbb{Z}}$ into maximal factors delimited by its sequence of crossing indices where all cells lie alternatively in the  right region and  the left region, beginning by the right one and ending with the left one.

$$ \sum_{k\in\mathbb{Z}} M(k) =  \displaystyle \sum_{k=-\infty}^{k_1} M(k) + \sum_{i=1}^{2m} \left(\sum_{k=k_i+1}^{k_{i+1}} M(k) \right) + \sum_{k=k_{2m+1}+1}^{+\infty} M(k)$$

For $ k \leq k_1$ the cells of label $k$ are in the right region, and for $k > k_{2m+1}$ they are in the left region  so that:
$$ \sum_{k \leq k_1} M(k) =  {\frac{M(k_1)}{1-x}} \hskip 0.3cm  \mbox{\rm and } \hskip 0.1cm  
\sum_{k>k_{2m+1}} M(k) = {\frac{yM(k_{2m+1})}{1-y}}$$

For $k_{2i-1} <  k \leq k_{2i}$ the cells of label $k$ lie in the left region so that:

$$ \sum_{k=k_{2i-1}+1}^{k_{2i}} M(k)  = \frac{y}{1-y}\left( M(k_{2i-1})-M(k_{2i})\right) $$

For $k_{2i} <  k \leq k_{2i+1}$ the cells of label $k$ lie in the right region so that:

$$ \sum_{k=k_{2i}+1}^{k_{2i}+1} M(k)  = \frac{1}{1-x}\left( M(k_{2i+1})-M(k_{2i})\right) $$

Observe that in  the sum $ \sum_{k\in\mathbb{Z}} M(k)$  the monomials $M(k_{2i})$ appear with a minus sign
and the monomials  $M(k_{2i+1})$ with a plus sign, moreover each one appears twice with factors
$\frac{y}{1-y}$ and $\frac{1}{1-x}$ adding these two factors gives $H(x,y)$  hence:
$$  \sum_{k\in\mathbb{Z}} M(k)= \left( \sum_{i=0}^{m} H(x,y)M(k_{2i+1})\right) + \left(\sum_{i=1}^{m} (-H(x,y))M(k_{2i})\right) 
$$

\end{proof}

\subsubsection{Evaluation of the monomials for the crossing indices of the Dyck  path} 

In this section we translate the evaluation of $M(k_i)$ by a formula involving the Dyck paths.

We first observe that a crossing point in the Dyck path described by the word 
$w$ corresponds to a factor $aa$ or $bb$ in $w$. Indeed since the Dyck path consists of North steps and East steps,
the curve defined by a trajectory consisting in visiting the cells in order of their labels can only cross the path
when there are two consecutive steps in the same direction. Hence the crossing cells $k_{2i}$ correspond to  factors $aa$ in $w$
while the crossing cells $k_{2i+1}$  correspond to factors $bb$. More precisely  a cell with label $k_{2i}$ is immediately 
on  the left  of 
the a North step of the Dyck path followed by another North step, while  a cell with label $k_{2i+1}$ is just below  of 
an East  step of the Dyck path followed by another East step. In order to take also into account the case
$k_1 = -1$ it is convenient to add an occurrence of $b$ at the end of $w$, that is to consider the word $wb$ instead of $w$.

In the example considered above the correspondence   between  factors and crossing cells is given by the
following table:

\begin{center}
\begin{tabular}{ccccccccccccccccccccc}
a&a&a&b&a&a&a&b&b&b&a&b&b&b&a&a&b&b&a&b&b\\
0&11&&&23&24&&35&25&&&16&6&&7&&8&&&-1
\end{tabular}
\end{center}

In the sequel we will consider that the label $k_i$ of a crossing cell  is also the label
of the corresponding occurrence of a  letter in the Dyck word. We will also define the conjugate $\tau(wb,k)$ of Dyck word $w$
followed by a letter $b$, where $k$ is the label of the occurrence
of $b$ appearing in $w=w_1bw_2$  as the the word $bw_2w_1$.

\medskip

Let us examine more precisely what we  do in order to compute  $rw(k)$ and $lw(k)$ for a cell with label $k$ corresponding
to an occurrence of the factor $bb$.  This computation may be decomposed as the  sum of 
$n-1$ values $rw_j(k)$ and $lw_j(w)$ each one corresponding to one row $j$ of the strip.
To determine  $rw_j(k)$  we count  the number of cells with coordinates $(i,j)$ which lie in the right region
and have a label greater than $k$, similarly $lw_j(k)$  is the number of cells with coordinates $(i,j)$ which lie in the left region
and have a label less than or equal to  $k$. Since labels of cells in the same row decrease from left to right,
for each $j$ at least oneof  two values $rw_j(k)$ and $lw_j(k)$ is equal to 0.

In the following Lemma we use the definition of $W(f)$ for a word $f$ given before Proposition \ref{prop:toxy}.

\begin{lemma}
\label{lem:conjuguerPourFacteurxx}
Let $w$ be a Dyck word and $k$
be a  crossing index of $w$ in the cut skew cylinder $\mathrm{Cyl}[w]$.
Then  we have 
$$M(k_i) = W(\tau(w,k_i))$$
Where $\tau(w,k_i)$ is the conjugate of the word $wb$ given as above  by the position of the letter with label
$k_i$, 
\end{lemma}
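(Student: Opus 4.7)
The plan is to verify $M(k)=W(\tau(w,k))$ by matching the $a$-heights of $\tau$ with a row-by-row decomposition of $M(k)$ on the cylinder. First I would locate the position $p$ of the letter of label $k$ inside $wb$: for an $aa$-crossing $k=L_{j_0}$ this letter is the $j_0$-th letter $a$ of $wb$, so $p=a_{j_0}$; for a $bb$-crossing $k=L_{j_0}-r(n-1)$ with $r\ge 1$ it is the letter at position $a_{j_0}+r$. In both cases the number $q$ of letters $a$ among $wb_1,\dots,wb_p$ equals $j_0$, and a direct calculation (using $L_\ell=(n-1)(\ell-1-x_\ell)+(\ell-1)$ and $a_\ell=x_\ell+\ell$) yields $h_p=1+k'$ with $k'=\lfloor k/(n-1)\rfloor$, where $h_t$ denotes the height of $wb$ at position $t$.

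Next I would unpack $W(\tau)$ through the cyclic structure. Because $\tau$ is the rotation of $wb$ starting just after position $p$, its $i$-th letter $a$ is the $\ell$-th letter $a$ of $wb$ with $\ell=((i+q-1)\bmod(n-1))+1$, and hence corresponds to row $j_i=\ell$ of $\mathrm{Cyl}[w]$. The height just after this $a$ satisfies
\[
\eta_i+1 \;=\; h_{a_\ell}-h_p-c_i, \qquad c_i \;=\; [a_\ell\le p] \;=\; [\ell\le j_0],
\]
where the wrap indicator $c_i$ accounts for the net descent $h_{|wb|}=-1$ whenever the prefix of $\tau$ passes the end of $wb$. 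Substituting $h_{a_\ell}=\ell-x_\ell$ and $h_p=1+k'$ collapses this to $\eta_i+1=\ell-1-x_\ell-k'-[\ell\le j_0]$.

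The crux is then to identify this integer with $rw_\ell(k)-lw_\ell(k)$, the contribution of row $\ell$ to $rw(k)-lw(k)$. Writing $L_\ell-k=(n-1)(\ell-1-x_\ell-k')+(\ell-j_0)$ with $(\ell-j_0)\in\{-(n-2),\dots,n-2\}$, a short case distinction on the sign of $L_\ell-k$ (combined with the explicit formulas $rw_\ell(k)=\max(0,\lfloor(L_\ell-k-1)/(n-1)\rfloor)$ and $lw_\ell(k)=\max(0,\lfloor(k-L_\ell)/(n-1)\rfloor+1)$) confirms the row-level identity. Since the cell of label $k$ lies in a single region of each row, at most one of $rw_\ell(k)$ and $lw_\ell(k)$ is positive, so $\eta_i\ge 0$ forces $\eta_i+1=rw_\ell(k)$ and $\eta_i<0$ forces $-\eta_i-1=lw_\ell(k)$. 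Summing over $i=1,\dots,n-1$ produces $\alpha(\tau)=rw(k)$ and $\beta(\tau)=lw(k)$, i.e.\ $M(k)=W(\tau(w,k))$. The hardest part is the bookkeeping around the cyclic rotation: unifying the $aa$- and $bb$-cases through the identity $q=j_0$, handling the wrap indicator $c_i$, and keeping the sign of the residue $(\ell-j_0)$ straight.
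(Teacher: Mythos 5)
Your proof is correct and follows essentially the same route as the paper's: you decompose $rw(k)$ and $lw(k)$ row by row, show that the contribution of the row met by the $i$-th occurrence of $a$ in the conjugate equals $\max(0,\eta_i+1)$, respectively $\max(0,-\eta_i-1)$, with the case distinction (your wrap indicator $c_i$, the paper's split $j\le i_1$ versus $j>i_1$) accounting for passing the end of $wb$, and then sum over the rows. Your arithmetic version (labels, $L_\ell-k=(n-1)(\ell-1-x_\ell-k')+(\ell-j_0)$, explicit floor formulas) just makes the paper's terse coordinate argument explicit, and your reading of $\tau(wb,k)$ as the rotation that places the labelled letter last (i.e.\ starting just after it) is indeed the convention under which the statement, and its use in Proposition~\ref{prop:toxy}, holds.
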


\begin{proof}
Let $(i_1,j_1)$ be the coordinates of the cell with label $k$ which we suppose to correspond to a factor $bb$.
Similar arguments may be used to prove the case where $k$ is the label of a cell
corresponding to the factor $aa$.
We have $rw(k) = \sum_{j=0}^{n-2} rw_j(k)$ and $lw(k) = \sum_{j=0}^{n-2} lw_j(k)$, 
 the values  $rw_j(w)$ are determined by the value $(i_2,j)$ of the coordinates of the cell situated immediately
 on the left of the North step of the Dyck word situated at row $j$. More precisely we have:
 \begin{itemize}
 \item If $ j > i_1$ then 
 $$rw_{j}(k) = Max(0,i_1-i_2) \hskip 2cm lw_{j}(k) = Max(0,i_2-i_1)$$
  \item If $ j \leq  i_1$ then 
 $$rw_{j}(k) = Max(0,i_1-i_2-1) \hskip 2cm lw_{j}(k) = Max(0,i_2-i_1+1)$$

 \end{itemize}
 But the values $i_1-i_2$ and $i_1-i_2-1$ are exactly those of heights of $\tau(w,k_i)$ increased by 1.
\end{proof}

The proof of Proposition \ref{prop:toxy}, follows directly from Lemmas
\ref{lem:fromuleCroisements}, and \ref{lem:conjuguerPourFacteurxx}.

\subsection{Sum on all cut skew cylinders of any circumferences}

Proposition~\ref{prop:toxy} is a finite and combinatorial description of $K_n(r,d)$ via an evaluation of $L_n(x,y)$ for $n\geq 2$.
In order to  make deeper connection with other chapters of combinatorics, we consider the generating function
$$ \mathbb{L}(x,y;z) =\sum_{n\geq 1} L_{n}(x,y)z^{n-1}$$
and give a simple formula for it involving two copies of the Carlitz $q$ analog series for Catalan numbers.

Before entering into the computation of this function,  we need to give a value for $L_1$.

\subsubsection*{On the complete graph $K_1$}

On $K_1$, a configuration is limited to the value $f_1$ assigned to the unique vertex.
Its degree is $f_1$ and its rank is $-1$ if $f_1 <0$ and equal to $f_1$ otherwise so that:

$$K_1(d,r) = \frac{1}{r}\frac{\frac{1}{d}}{1-\frac{1}{d}}+\frac{1}{1-rd}$$

Applying the formula
$$ K_n(d,r) = \frac{d^{{n-1 \choose 2}-1}}{r}L_n(\frac{1}{d},rd)$$

gives

$$L_1(x,y) = \frac{x}{1-x} +  \frac{1}{1-y} = H(x,y)$$
\subsubsection*{A formula for the generating function of $L_n(x,y)$}

The  generating series of the  Tutte polynomials of the complete graphs, were proved to have compact expressions (see \cite[chapter 5]{gesselSagan}, \cite[equation (17)]{tutte2}).

Notice that these polynomials enumerate  the spanning trees (in bijection with recurrent configurations) of $K_n$
using two parameters external and internal activity. 

For the series  $\mathbb{L}(x,y;z)$ can also be expressed by a compact formula
as a ``simple'' rational function of $z$ and two copies of Carlitz $q$-analogue:
$$ C(q,z) = \sum_{n\geq 0} \sum_{w\in D_n} q^{area(w)}z^n$$
where $D_n$ denotes the set of Dyck words of semi-length $n$.

\begin{theorem}
\label{thm:LnC}
We have 
$$ \mathbb{L}(x,y;z) = \frac{(1-xy)}{(1-x)(1-y)}\frac{C(x,xz)+C(y,yz)-C(x,xz)C(y,yz)}{1-C(x,xz)zC(y,yz)}.$$
\end{theorem}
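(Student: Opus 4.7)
The plan is to apply Proposition~\ref{prop:toxy}, which gives $L_n(x,y) = H(x,y)\bigl(S_b^{(n)} - S_a^{(n)}\bigr)$ for $n\geq 2$ with $S_b^{(n)} = \sum_{bfb\in A_n} W(bfb)$ and $S_a^{(n)} = \sum_{afa\in A_n} W(afa)$; the case $n=1$ yields $L_1 = H$ by direct inspection. Introducing
\[
\mathcal{P}(x,y;z) = \sum_{n\geq 2} z^{n-1} S_b^{(n)}, \qquad \mathcal{Q}(x,y;z) = \sum_{n\geq 2} z^{n-1} S_a^{(n)},
\]
we have $\mathbb{L} = H\cdot(1+\mathcal{P}-\mathcal{Q})$, so it suffices to prove
\[
1 + \mathcal{P} - \mathcal{Q} = \frac{P+Q-PQ}{1-zPQ}, \qquad P := C(x,xz),\ Q := C(y,yz).
\]

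I would compute $\mathcal{P}$ and $\mathcal{Q}$ by decomposing the corresponding lattice paths into arches at the zero level. The prefix $bf$ of a word $bfb$ is a path from $0$ to $0$ starting with a down-step, which factors as a non-empty sequence of arches with the first arch forced to be a below-arch $bD^-a$. A direct height computation shows that the generating function for a single above-arch $aD^+b$ is $U(x,z) = xz\,C(x,x^2z)$ and for a single below-arch is $V(y,z) = zC(y,yz)$ (the latter via the reversal bijection $D^-\leftrightarrow \widetilde{D}$ with positive Dyck words), so that $\mathcal{P} = V/(1-U-V)$. For $\mathcal{Q}$, I would split $af$ at its last visit to height $0$ as $g\cdot h$ with $g$ a non-empty arch sequence beginning with an above-arch, and $h = b\cdot h'$ where $h'$ travels from $-1$ to $-2$ staying at or below $-1$; a further split of $h'$ at its last visit to $-1$ produces a once-shifted arch sequence, a separating $b$, and a twice-shifted arch sequence. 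Since a one-level downward shift substitutes $z\mapsto yz$ in $V$, and the final $a$ of $afa$ contributes an extra factor $yz$, this gives
\[
\mathcal{Q} = \frac{yz\,U}{(1-U-V)\bigl(1-V(y,yz)\bigr)\bigl(1-V(y,y^2z)\bigr)}.
\]

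The identity then collapses through repeated use of the Carlitz functional equation $C(q,z) = 1/\bigl(1-zC(q,qz)\bigr)$. Applied with $q=x$ it gives $U = (P-1)/P$, so $1-U = 1/P$; applied with $q=y$ at two successive scales it yields $1/(1-V(y,yz)) = Q$ and $1/(1-V(y,y^2z)) = C(y,y^2z)$, together with $yz\,Q\cdot C(y,y^2z) = Q-1$. Substituting produces $1-U-V = (1-zPQ)/P$, whence $1+\mathcal{P} = 1/(1-zPQ)$ and $\mathcal{Q} = (P-1)(Q-1)/(1-zPQ)$; therefore
\[
1 + \mathcal{P} - \mathcal{Q} = \frac{1-(P-1)(Q-1)}{1-zPQ} = \frac{P+Q-PQ}{1-zPQ},
\]
which proves the theorem after multiplying by $H(x,y)$. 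The main obstacle is the decomposition of $\mathcal{Q}$: each downward level shift introduces a new factor $1-V(y,y^kz)$ in the denominator, and only the functional equation for $C$ reveals that these nested shifted factors, together with $U$, reassemble into the compact rational function in $P$ and $Q$ stated in the theorem.
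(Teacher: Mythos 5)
Your proposal is correct, and it reaches the theorem by the same overall strategy as the paper (reduce to Proposition~\ref{prop:toxy} together with $L_1=H$, then evaluate the two sums over words $bfb$ and $afa$ as lattice-path generating functions), but the evaluation itself goes by a genuinely different decomposition. The paper cuts each walk at every crossing between the two levels adjacent to the starting height, which produces the unambiguous factorizations $b\mathbb{Y}a(\mathbb{X}b\mathbb{Y}a)^*\mathbb{X}b$ and $(\mathbb{X}\setminus\{\epsilon\})b(\mathbb{Y}a\mathbb{X}b)^*(\mathbb{Y}\setminus\{\epsilon\})$; there each block $\mathbb{X}$ or $\mathbb{Y}$ contributes $C(x,xz)$ or $C(y,yz)$ directly, so the closed forms $zPQ/(1-zPQ)$ and $(P-1)(Q-1)/(1-zPQ)$ appear with no further algebra. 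You instead use first-return (arch) decompositions at level $0$ and at the shifted levels $-1$, $-2$, which is the more standard reflex but produces the shifted series $U=xzC(x,x^2z)$, $V(y,y^kz)=y^kzC(y,y^{k+1}z)$, and you then need the Carlitz functional equation $C(q,z)=1/(1-zC(q,qz))$ several times to reassemble everything into $P=C(x,xz)$ and $Q=C(y,yz)$; I checked your arch generating functions, the bijectivity of your splitting of $af$ at its last visits to $0$ and $-1$, the extra weight $yz$ of the final letter, and the algebra, and they all hold, yielding exactly the paper's two intermediate rational expressions and hence the stated formula. In short, the paper's crossing decomposition buys a shorter, equation-free derivation, while your route is self-contained at the level of classical Dyck-path technology at the cost of the functional-equation bookkeeping; both are valid proofs of Theorem~\ref{thm:LnC}.
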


\begin{proof}
$\bullet$ For $n=1$, we use the previously computed generating function $L_1(x,y)=H(x,y).$

$\bullet$ For $n\geq 2$, we use the description of $L_n(x,y)$ in Proposition~\ref{prop:toxy}.
To estimate $ \sum_{n\geq 2} L_{n}(x,y)z^{n-1}$ we have to compute the generating function of words on the alphabets $\{a,b\}$ starting and ending by \emph{different occurrences} of the same letter and containing one more occurrence of the letter $b$ than that of the letter $a$. 
Since there is a negative sign to the weight  if the first letter is $a$ and a positive sign if the first letter is $b$ we discuss separately this two cases.
We denote by $\mathbb{X}$ the language consisting of all Dyck words, including the empty word denoted by $\epsilon$.
We denote $\alpha$ the morphism  on words on alphabet $\{a,b\}$ exchanging the letters $a$ and $b$, so that $\alpha(a)=b$ and $\alpha(b)=a$ and denote $\mathbb{Y}=\alpha(\mathbb{X})$.
 
Let $f=bf''b$ be a word containing $n$ occurrences of the letter $b$,  $n-1$ occurrences of the letter $a$ (and such that 
the first and the last letters are occurrences of $b$).

This word may be interpreted as the sequence of steps of a walk on the line $\mathbb{Z}$, a letter $a$ corresponding to an increment, and a letter $b$ to a decrement. We assume that this walk starts at $1$ and hence ends at $0$ and decompose it at each step from $1$ to $0$ or $0$ to $1$. In terms of language, this decomposition leads to the following non-ambiguous description of these words:
$$ b\mathbb{Y}a(\mathbb{X}b\mathbb{Y}a)^*\mathbb{X}b.$$ 
The weight $W[f](x,y)$ is distributed on the occurrences of the letter $a$ as in its definition. We add a factor $z$ to each occurrence of the letter $a$ in order to take into account the size of words.
Each factor $\mathbb{X}$ corresponds to a ``Dyck'' walk starting and ending at $1$, so compared to the usual definition, the height of a step is incremented by one. Hence the generating function of this factor is $C(x,xz)$.
Similarly, each factor $\mathbb{Y}$ has generating function $C(y,yz)$.
Each $a$ step from $0$ to $1$ is weighted by $z$ so the previous language decomposition leads to the generating function
$$ \frac{C(x,xz)zC(y,yz)}{1-C(x,xz)zC(y,yz)}.$$ 

We repeat a similar proof for words whose extremal letters are two different occurrences of the  letter $a$.
The interpretation in terms of walk on the line and its decomposition according to the same steps  leads to the formula
$$ (\mathbb{X}\backslash \{\epsilon\})b(\mathbb{Y}a\mathbb{X}b)^*(\mathbb{Y}\backslash\{ \epsilon\})$$
where at some steps we had to exclude the empty word to guarantee the extremal occurrences of letter $a$.
This leads to the generating function for this case
$$ \frac{(C(x,xz)-1)(C(y,yz)-1)}{1-C(x,xz)zC(y,yz)}.$$ 

Hence, the difference of the two generating functions gives
$$ \sum_{n\geq 2} L_{n}(x,y)z^{n-1} = \frac{(1-xy)}{(1-x)(1-y)}\frac{C(x,xz)zC(y,yz) - (C(x,xz)-1)(C(y,yz)-1)}{1-C(x,xz)zC(y,yz)}.$$
This formula added to the case $n=1$ ends the proof of our Theorem.
\end{proof}

\newpage
\appendix
\section{An alternative using pointed cut skew cylinders}
In the main part of this paper the proofs of the main results concerning the rank in $K_n$ are given using a classical object, namely the  
set of Dyck words. We wish to introduce in this appendix another combinatorial object which gives a wider geometric insight and guided some of our intuitions on significant parts of this work.
In addition, this setting seems also adapted to the similar study on the bipartite complete graphs $K_{m,n}$.
This case is out of the scope of this paper but we have work in progress about it.
Hence, in this appendix we present quickly these objects, skipping potentially tedious details by pointing to the related detailed proofs in the paper.
A consequence of this lake of detailed proof is that propositions are there called statements, even if we estimate that the reader can figure out the details from our preceeding proofs to avoid such a drop in status. 

\subsection{Some operators on compact sorted configurations}
We deal only with the complete graph $K_n$ for some fixed $n\geq 2$.
Let $f\in\mathbb{Z}^n$ and $g\in \mathbb{Z}^n$ be generic configurations on $K_n$, we recall that we fix the vertex $n$ to be the sink 
used to define the  parking configurations.
The toppling equivalence relation $f\sim_{L_G} g$ induces classes.

We represent the class containing the configuration $f$ by the existing and unique parking configuration denoted $\mathrm{park}(f)$,which is toppling equivalent to $f$.
We also consider the ($S_{n-1}$)-permuting equivalence, $f\sim_{S_{n-1}} g$ if $f_n=g_n$ and there exists a permutation $\sigma\in S_{n-1}$ on the elements $\{1,\ldots n-1\}$ such that $f_i=g_{\sigma(i)}$ also denoted $f=\sigma.g$. 
We represent the class containing the configuration $f$ by the existing and unique sorted configuration denoted $\mathrm{sort}(f)=\sigma.f=g$ where $\sigma$ is a sorting permutation in $S_{n-1}$ leading to $g$ whose first $n-1$ entries $g_1\leq g_2 \leq \ldots \leq g_{n-1}$ are weakly increasing and $f_n=g_n$. 
Hence any configuration $f$ can be represented by the sorted configuration $\mathrm{sort}(f)$ and some permutation $\sigma\in S_{n-1}$, where $\sigma$ is not necessarily unique. 
These two distinct toppling and permuting equivalences leads to the toppling and permuting equivalence: $f\sim_{L_{G},S_{n-1}} g$ if there exists $\sigma\in S_{n-1}$ such that $f\sim_{L_G} \sigma.g$, notice that in that case the ranks and degrees of the configurations $f$ and $g$ are equal.
Since  sorting  the $n-1$ first entries, preserves the property of being a parking configuration, the toppling and permuting class of a configuration $f$ is represented by the existing and unique sorted and parking configuration $\mathrm{sort}\circ\mathrm{park}(f)$  which is toppling and permuting equivalent to $f$.

Most of the computations for the rank on $K_n$ may manipulate only sorted configurations, via a $\mathrm{sort}$ after each elementary step.
\begin{definition}
A configuration $f$ is called \emph{compact} if any two of the first $n-1$ entries differ by at most $n$, that is:
$\max_{1\leq i,j \leq n-1}|f_i-f_j|\leq n.$
\end{definition}
If  sorted parking configurations are motre precise  to state our results, it appears that the weaker  notion of sorted compact configuration is sufficient to describe configurations appearing in most of our computations in our proofs for $K_n$.

First, for any configuration $f$, we denote by $\mathrm{compact}(f)$, the result $g$ of the algorithm in Section~\ref{subsec:algo1} defined for $i< n$ by $g_i=f_i-f_1 (\mbox{mod } n)$ and $g_n=deg(f)-\sum_{i=1}^{n-1}g_i$.
This map is the initial projection of the configuration $f$ into a compact configuration $\mathrm{compact}(f)$. 
Then the computations manipulate exclusively compact configurations.
\begin{definition}
We define the operator $T$ on sorted configurations by 

\centerline{$\displaystyle T(f) = \mathrm{sort}(f-\Delta^{(n-1)})$} 

Hence $T(f)$ is obtained from $f$ by toppling  the vertex $n-1$ of maximal value among the $\{f_i\}_{i=1\ldots n-1}$ and then sorting  the resulting configuration.
\end{definition}
A key property is that this operator becomes invertible when restricted to compact sorted configurations.
Indeed, explicit computation shows that for a compact sorted configuration $f$  we have:

\centerline{$\displaystyle  T((f_1,\ldots f_{n-1},f_n))=(f_{n-1}-(n-1),f_1+1,f_2+1,\ldots,f_{n-2}+1,f_n+1)$}
Since  the compact assumption in a sorted configuration $g$ is equivalent to 
$g_{n-1}\leq g_1+n$.

Hence  the permutation sorting into a parking configuration  this case is  the cyclic permutation $\tau\in S_{n-1}$ defined by $\tau(i)=i+1$ for $i<n-1$ and $\tau(n-1)=1$. 
In addition, when restricted to sorted compact configurations, the inverse of the operator is explicitely given by 

\centerline{$\displaystyle  T^{-1}((f_1,\ldots f_{n-1},f_n))=(f_{2}-1,\ldots,f_{n-2}-1,f_{n-1}-1,f_{1}+(n-1),f_n-1)$}

Moreover for a compact sorted configuration $f$, the $(n-1)$-th power of $T^{-1}$ corresponds to the toppling of the sink:

\centerline{$T^{1-n}(f)=f+\Delta^{(n)}.$}

Indeed one can show that 

\begin{statement}
For any sorted compact configuration $f$ there exists some power $i\in\mathbb{Z}$ such that 

\centerline{$\mathrm{sort}\circ\mathrm{park}(f)=T^{i}(f).$}

\end{statement}

A key point in this proof is that in a stable sorted compact configuration $f$, a subset $Y$ of maximal cardinality which is a counter example to the parking assumption is described as $\{n-1,n-2,\ldots n-k\}$, hence the toppling of all the vertices of this set are described by $T^k$. One can then conclude via the following naive algorithm computing $\mathrm{sort}\circ\mathrm{park}(f)$ almost according to the element of its definition for a compact sorted configuration $f$ :
\begin{itemize}
\item While at least one of the $n-1$ first entries is negative, topple the sink (this is equivalent to do the inverse of $T^{n-1}$).
\item While there is a subset $Y$ giving a counter example to parking assumption, pick the subset $Y$ of maximal cardinality and topple its vertices. ($T^{|Y|}$ at each loop iteration) 
\end{itemize} 
The unicity of a sorted parking configuration in a toppling and permuting equivalent class allows to reformulate this result in:

\begin{statement}
The set of sorted compact configurations toppling and permuting equivalent to the sorted compact configuration $f$ is exactly $\{T^i(f)\}_{i\in\mathbb{Z}}$ where all these powers of $T$ are distinct.
\end{statement}

All these powers are distinct since $T$ increments exactly by $1$ the value on the sink $n$.

\subsection{Pointed cut skew cylinders to represent previous operators} 

The doubly pointed cut skew cylinders defined below appear to be the natural combinatorial objects which describe $\{T^i(\mathrm{sort}\circ\mathrm{park}\circ\mathrm{compact}(f))\}_{i\in\mathbb{Z}}$, they  also describe the  graphical description of our algorithm computing the rank on $K_n$ and later the analysis of this algorithm. 

In the following we also call vertex a point of the plane $\Z^2$.
\begin{definition}The \emph{skew cylinder $C_n$} of circumference $2n-1$ is obtained from the usual two dimensional grid $\mathbb{Z}^2$ by identifying two vertices of coordinates $(i,j)$ and $(i',j')$ if and only if $(i',j') = (i,j)+k(n,n-1)$ for some $k\in\mathbb{Z}$ also denoted $(i,j)\sim_{(n,n-1)} (i',j')$.
\end{definition}
The skew cylinder may be described as illustrated in Figure~\ref{fig:strip} by a strip made up of $n-1$ rows, each row containing an infinite number of cells. In this Figure we have added two virtual copies of rows above and below the strip, these express the neighborhood in the skew cylinder for each cell in the top and bottom row of the strip. Notice that traversing the cylinder from one cell to the one at the north-east of it, defines a unique {\em diagonal} which visits all the cells of the strip.

This kind of skew cylinder is far from new since it already appeared or example in 1941 in a work of Kramers and Wannier \cite{kramersWannier}.

The unique north-east diagonal allows to label the vertices (i.e points in $ \Z^2$ )as follows:
\begin{definition} (of labeling and spiral traversal of vertices in a skew cylinder)
Label  $0$ the origin of the skew cylinder and then from the vertex labeled  $i$, the vertices of label $i+1$, and  $i-1$ are obtained by a diagonal north-east step $(+1,+1)$ for the first and  a south-west step $(-1,-1)$ for the second.
Hence, the visit of vertices in increasing order according to the label in $\mathbb{Z}$ describes what we call a \emph{spiral traversal} of the vertices.
\end{definition}
Notice that this spiral traversal corresponds to a visit of the single diagonal of the skew cylinder in north-east direction.

We will show how it is  related both to the powers $T^{i}(\mathrm{sort}\circ\mathrm{park}(f))$ and to the order of labeling in our algorithm computing the rank on $K_n$.

\begin{figure}[ht!]
\begin{center}
\begin{tikzpicture}[scale=0.98]
\foreach \x/\y/\celllabel in {6/4/-4,5/4/0,4/4/4,3/4/8,2/4/12,1/4/16}{
\draw[line width=2,draw=black!40!white] (\x-1,\y) rectangle (\x,\y+1);
\draw node[black!40!white] at (\x-0.5,\y+0.5) {$\celllabel$};
}
\foreach \x/\y/\celllabel in {0/-1/-5,-1/-1/-1,-2/-1/3,-3/-1/7,-4/-1/11,-5/-1/15}{
\draw[line width=2,draw=black!40!white] (\x-1,\y) rectangle (\x,\y+1);
\draw node[black!40!white] at (\x-0.5,\y+0.5) {$\celllabel$};
}
\foreach \x/\y/\celllabel in {0/0/0,1/1/1,2/2/2,3/3/3,-1/0/4,0/1/5,1/2/6,2/3/7,-2/0/8,-1/1/9,0/2/10,1/3/11,-3/0/12,-2/1/13,-1/2/14,0/3/15,-4/0/16,-3/1/17,1/0/-4,2/1/-3,3/2/-2,4/3/-1,5/3/-5,4/2/-6}{
\draw[line width=2] (\x-1,\y) rectangle (\x,\y+1);
\draw node at (\x-0.5,\y+0.5) {$\celllabel$};
}
\draw node[rotate=45] at (-2.5,2.5) {$\ldots$};
\draw node[rotate=45] at (2.5,1.5) {$\ldots$};
\draw (-6,0) grid (6,4);
\end{tikzpicture}
\caption{In black, the skew cylinder $C_4$ as a strip. The label of a cell is the label of its bottom right corner in the spiral traversal. The wrapping conditions are described by the grey rows which are translated copies of the top and bottom rows. Notice that starting from the vertex labeled by $0$ on the bottom (black) row one reaches its copy on the top (grey) row after any combination of $4$ north steps and $5$ east steps and such a path disconnects the cylinder  in two parts.}\label{fig:strip}
\end{center}
\end{figure}
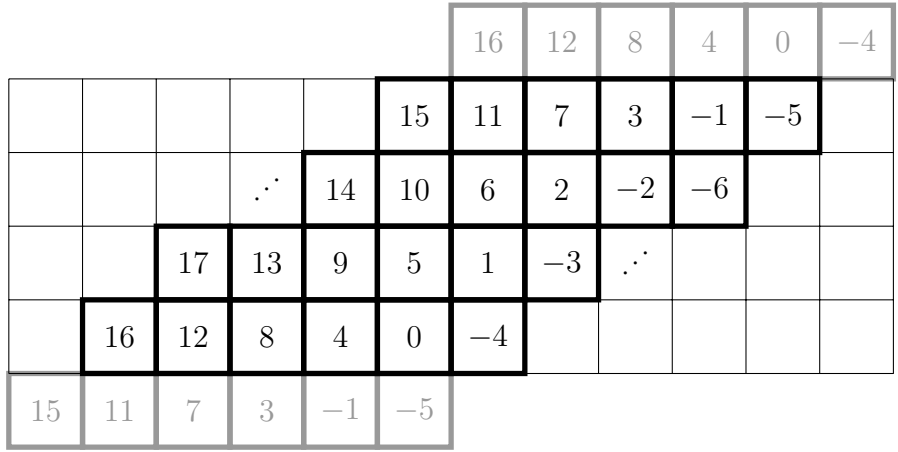

This cylinder $C_n$ is said of circumference $2n-1$ since   disconnecting it by a general cut, which is a self-avoiding loop starting from the origin and made up of north or east steps, requires $2n-1$ elementary steps: $n-1$ north steps and $n$ east steps.
If one maps a north step to the letter $a$ and an east step to the letter $b$, it obtains an obvious bijection between general cuts of $C_n$ and the words of $A_n$.
We will impose that the cut is described by a word $w$ of $D_n\subsetneq A_n$, these cuts are in bijection with the set of all  $n-1$ first entries of the sorted parking configurations on $K_n$. 
For $w \in D_n$, we denote $C_n[w]$ the skew cylinder disconnected by the cut described by the word/path $w$ starting from the origin.
\begin{definition}
A \emph{doubly pointed cut skew cylinder} $C_n[w](s,x)$ is defined by a triplet $(w,s,x)\in D_n\times\mathbb{Z}\times\mathbb{Z}$ where $s$ and $x$  describe a pointer on the vertices with  labels $s$ and respectively $x$ in the cut skew cylinder $C_n[w]$.
\end{definition}
 The two pointers may be equal ($s=x$) but if $s\neq x$ then $C_n[w](s,x)$ and $C_n[w](x,s)$ are distinct.

The following explicit mapping  $\mathrm{cyltoconf}$ is a bijection between the set of  doubly pointed cut  skew cylinders and 
 the  infinite set of compact sorted configurations on $K_n$. 

\begin{definition}
For any doubly pointed cut skew cylinder $C[w](s,x)$, the configuration $$f=\mathrm{cyltoconf}(C[w](s,x))$$ is defined by $f_n=s$ and for $i<n$,$$f_i=\frac{1}{n-1}(x_i-d_i)$$
where $x_i=x+n(i-1)$ and $d_i$ is the label of the vertex which is the start of a north step of the cut $w$ and on the same row as the vertex of label $x_i$.
\end{definition}
This definition has a   graphical interpretation by drawing  a segment of $n-1$ north steps starting from the  vertex labeled $x$, 
the values of the $f_i$'s   are equal to  the relative position of a  north steps of the   segment and the north step of the cut $w$ in the same row.   

\begin{figure}[ht!]
\begin{center}
\begin{tikzpicture}[scale=0.8]
\draw (-8,0) grid (9,5);
\draw[line width=2] (4.5,2.5) circle (0.35);
\draw node at (4.5,2.5) {$s$};
\draw[line width=2,draw=red,->] (-2,0) -- (-2,5);
\foreach \y in {1,2,...,5}{
\draw node at (-2.5,\y-0.5) {$x_\y$};
}
\draw[draw=red,line width=2] (-2.5,0.5) circle (0.35);
\draw[line width=3,draw=blue] (0,0) -- (0,1) -- (1,1) -- (1,3) -- (2,3) -- (2,4) -- (4,4) -- (4,5) -- (6,5);
\foreach \x/\y in {0/1,1/2,1/3,2/4,4/5}{
\draw node at (-0.5+\x,\y-0.5) {$c_\y$};
}
\draw[draw=blue,line width=2] (-0.5,0.5) circle (0.35);
\foreach \y in {1,2}{
\draw node at (-0.5,\y+3-0.5) {$x'_\y$};
}
\foreach \y in {3,4,5}{
\draw node at (-6.5,\y-2-0.5) {$x'_\y$};
}
\draw[draw=green,line width=2] (-0.5,3.5) circle (0.35);
\draw[draw=green,line width=2,->] (0,3) -- (0,5);
\draw[draw=green,line width=2,->] (-6,0) -- (-6,3);
\foreach \y in {1,2,...,5}{
\draw node at (7.5,\y-0.5) {$x''_\y$};
}
\draw[line width=2,draw=orange,->] (8,0) -- (8,5);
\draw[draw=orange,line width=2] (7.5,0.5) circle (0.35);
\draw[line width=1,dashed,draw=blue] (0,0) -- (5,5);
\end{tikzpicture}
\end{center}
\caption{\label{fig:defcompact}}
\end{figure}

Figure~\ref{fig:defcompact} contains three examples of $\mathrm{cyltoconf}$ for the cut $w=abaababbabb\in D_5$ drawn in blue.
The dashed blue line helps to check that $w\in D_5$ showing the factorization $w=w'b$ where $w'$ is a Dyck word.
The label in a cell is the label of its bottom right corner.
\begin{itemize}
\item $\mathrm{cyltoconf}(w,s,x_1)=(2,3,3,4,6,s)$  (defined by cells pointed by black and red circles in $\mathrm{Cyl}[w]$)
\item $\mathrm{cyltoconf}(w,s,x'_1)=(2,4,6,7,7,s)$ (defined by cells pointed by black and green circles in $\mathrm{Cyl}[w]$)
\item $\mathrm{cyltoconf}(w,s,x''_1)=(-8,-7,-7,-6,-4,s)$ (defined by cells pointed by black and orange circles in $\mathrm{Cyl}[w]$).
\end{itemize} 
By convention, the cell containing $c_1$ is labeled by $0$, then $s=-13$, $x_1=10$, $x'_1=18$ and $x''_1=-40$.

The motivation for introducing the map $\mathrm{cyltoconf}$ comes from the following description of the  operators on sorted compact configurations.

\begin{statement}
For any sorted parking configuration $f$, we have
$$\mathrm{sort}\circ\mathrm{park}(f)=\mathrm{cyltoconf}((C_n[\phi(f)],\psi(f),0)).$$
Moreover, for any doubly pointed cut skew cylinder $(C_n[w],x,s)$ we have
$$T(\mathrm{cyltoconf}((C_n[w],s,x))) = \mathrm{cyltoconf}((C_n[w],s+1,x-1)),$$
$$\mathrm{cyltoconf}((C_n[w],s,x))-\Delta^{(n)}=\mathrm{cyltoconf}((C_n[w],s-(n-1),x+(n-1))),$$
$$\mathrm{cyltoconf}((C_n[w],s,x))+\epsilon^{(n)}=\mathrm{cyltoconf}((C_n[w],s+1,x)),$$
$$\mathrm{sort}(-\mathrm{cyltoconf}((C_n[w],s,x)))=\mathrm{cyltoconf}((C_n[\Phi(w)],-s,\mathrm{lastright}(w)-x-k)).$$ 
Where $k = n(n-3)+1$.
\end{statement}

The end of this section is devoted to give  some elements of proof for this statement.
Adding a final east step to the (blue) Dyck path in Figure~\ref{fig:calcRank3} leads to recognize the cut of a skew cylinder starting at the origin which is the bottom right corner of cell labeled by $0$.

This example should convince that for any sorted parking configuration $f$ we have 

\centerline{$f=\mathrm{park}(f)=\mathrm{cyltoconf}((C_n[\phi(f)],\psi(f),0))$}

 where the north steps used as $(x_i)_{i=1\ldots n-1}$ are the north steps on the $y$-axis.
In addition, inspection shows that for any doubly pointed cut skew cylinder we have

\centerline{$T(\mathrm{cyltoconf}((C_n[w],s,x)) = \mathrm{cyltoconf}((C_n[w],s+1,x-1))$.}

These two first remarks are combined to show that $\mathrm{cyltoconf}$ is a bijection from the doubly pointed cut skew cylinders of circumference $2n-1$ to the sorted compact configurations described as

\centerline{$\{T^i(f)| \mbox{ $f$ sorted parking configuration  on $K_n$, $i\in\mathbb{Z}$}\}$.}

Iterating the description of (the inverse of) $T$ we obtain 

\centerline{$\mathrm{cyltoconf}((C_n[w],s,x))-\Delta^{(n)}=\mathrm{cyltoconf}((C_n[w],s-(n-1),x+(n-1)))$,}

and obviously the increment by one of the value on the sink is described by 

\centerline{$\mathrm{cyltoconf}((C_n[w],s,x))+\epsilon^{(n)}=\mathrm{cyltoconf}((C_n[w],s+1,x))$.}

A less obvious relation describing the operator $f\longrightarrow \mathrm{sort}(-f)$ is a key to understand globally many properties of involutions of this paper and explain the origin and name of the parameter $\mathrm{lastright}(w)$:

\centerline{$\mathrm{sort}(-\mathrm{cyltoconf}((C_n[w],s,x)))=\mathrm{cyltoconf}((C_n[\Phi(w)],-s,\mathrm{lastright}(w)-x-k))$.}

The proof of this relation is the subject of the three following paragraphs.
We define two symmetries on skew cylinders. 
Then we show  a link by graphic superimposition between $C_n[w]$ and $C_n[\Phi(w)]$ described by the preceding symmetries.
Finally we refine this superimposition to the definitions of $f$ and $\mathrm{sort}(-f)$ by $\mathrm{cyltoconf}$ to obtain the expected relation. 

First we define two symmetries on skew cylinders involved in a link between $C_n[w]$ and $C_n[\Phi(w)]$.

\begin{definition}(shift and flip on skew cylinders)

The \emph{unit diagonal shift} on $\mathbb{Z}^2$ is defined by  $\mathrm{shift}((i,j))=(i+1,j+1)$.
This symmetry is compatible with the equivalence $\sim_{(n,n-1)}$ defining $C_n$ hence induces 
a shift with similar notations on skew cylinder $C_n$.

The \emph{flip symmetry} $\mathrm{flip}((i,j))=(-i,-j)$ on the two dimensional grid $\mathbb{Z}^2$ is also compatible with the relation $\sim_{(n,n-1)}$ hence induces a flip with similar notations on skew cylinder $C_n$.
\end{definition}

The shift on skew cylinder maps each vertex to the next according to the spiral traversal.
The compatibility of the flip with $\sim_{(n,n-1)}$ comes from  a multiplication by $-1$ of the relation assuming  $(i,j) \sim_{(n,n-1)} (i',j')$ which leads to a one  proving $(-i,-j) \sim_{(n,n-1)} (-i',-j')$.
We notice that the image $F(C_n)$ by the flip symmetry of the skew cylinder $C_n$ of circumference $2n-1$ is $C_n$ except that the spiral traversal of vertices is reversed since the vertex labeled by $x$ becoming labeled  $-x$.

\begin{center}
\begin{figure}[ht!]
\begin{tikzpicture}
\draw (-6,0) grid (9,7);
\draw[line width=2,draw=blue] (0,0) -- (0,2) -- (1,2) -- (1,5) -- (3,5) -- (3,6) -- (5,6) -- (5,7) -- (8,7);
\draw[line width=1,dashed,draw=green] (0,0) -- (7,7);
\draw[line width=0,fill=green] (0,0) circle (0.1);
\draw[line width=1,dashed,draw=red] (1,5) -- (-4,0);
\draw[line width=1,dashed,draw=red] (4,7) -- (2,5);
\draw[line width=0,fill=red] (1,5) circle (0.1);
\foreach \i/\j in {1/1,2/2,3/3,4/4,5/5,6/6,2/3,3/4,4/5,2/4}{
\draw[line width=0,fill=green,opacity=0.5] (\i-0.45,\j+0.55 ) rectangle (\i-0.05,\j+0.95);
}
\foreach \i/\j in {-3/1,-2/1,-1/1,-2/2,-1/2,-1/3,0/3,0/4,4/7}{
\draw[line width=0,fill=red,opacity=0.5] (\i+0.05,\j-0.95 ) rectangle (\i+0.45,\j-0.55);
}
\draw[draw=green,line width=2,->] (-5.05,0) -- (-5.05,7);
\draw[draw=red,line width=2,->] (-4.95,7) -- (-4.95,0);
\foreach \i/\j/\wlabel in {0/0/0,1/1/1,2/2/2,3/3/3,4/4/4,5/5/5,6/6/6,-1/0/7,0/1/8,1/2/9,2/3/10,3/4/11,4/5/12,5/6/13,-2/0/14,-1/1/15,0/2/16,1/3/17,2/4/18,3/5/19,4/6/20,-3/0/21,-2/1/22,-1/2/23,0/3/24,1/4/25,2/5/26,3/6/27,-4/0/28,-3/1/29,-2/2/30,-1/3/31,0/4/32,1/5/33,-5/0/35,-5/1/43,-5/2/51,-5/3/59,-5/4/67,-5/5/75,-5/6/83}{
\draw node[green] at (\i-0.25,\j+0.25) {\tiny{$\wlabel$}};
}
\foreach \i/\j/\phiwlabel in {1/5/0,0/4/1,-1/3/2,-2/2/3,-3/1/4,4/7/5,3/6/6,2/5/7,1/4/8,0/3/9,-1/2/10,-2/1/11,5/7/12,4/6/13,3/5/14,2/4/15,1/3/16,0/2/17,-1/1/18,6/7/19,5/6/20,4/5/21,3/4/22,2/3/23,1/2/24,0/1/25,7/7/26,6/6/27,5/5/28,4/4/29,3/3/30,2/2/31,1/1/32,8/7/33,-5/1/-10,-5/2/-18,-5/3/-26,-5/4/-34,-5/5/-42,-5/6/-50,-5/7/-58}{
\draw node[red] at (\i+0.25,\j-0.25) {\tiny{$\phiwlabel$}};
}
\end{tikzpicture}
\caption{Superimposition of $\mathrm{cyltoconf}((C_8[aabaaabbabbabbb],s,35))$ and $\mathrm{shift}^{33}\circ\mathrm{flip}(\mathrm{cyltoconf}((C_8[aaabaabbbabbabb],-s,-58)))$\label{fig:superimposition}}
\end{figure}
\end{center}

A key element of proof is the superimposition of the graphic descriptions of most notions in $C_n[w]$ and $\mathrm{shift}^{\mathrm{lastright(w)}+2(n-1)}\circ \mathrm{flip}(C_n[\Phi(w)])$.
Figure~\ref{fig:superimposition} provides an example of such a superimposition.
There, each vertex $x$ has two labels, the (green) label $[x]_w$ in $C_n[w]$ drawn, as previously, in the bottom right corner of the top left cell incident to the vertex and the (red) label $[x]_{\Phi(w)}$ in $\mathrm{shift}^{\mathrm{lastright}(w)+2(n-1)}\circ \mathrm{flip}(C_n(\Phi(w)))$ drawn in the top left corner of the bottom right cell incident to the vertex.
To prove such superimposition, one remarks that the image of the cut $w\in D_n$ by $\mathrm{flip}$ is the general cut $\tilde{w}\in A_n$.
The shift is then iterated so that the conjugate $\Phi(w)$ of $\tilde{w}$ in $D_n$ starts at the origin.
It means that  the vertex of label $[0]_{\Phi(w)}$ is also the vertex reached after reading the prefix $u$ in the factorisation $w=uv$ 
such that $u$ is the longest prefix with maximal value by $\delta$. 
Hence the vertex labeled by $[0]_{\Phi(w)}$ is also labeled in $C_n[w]$ by
$$ [n|u|_a-(n-1)|u|_b]_w=[\mathrm{lastright}(w)+2n-1]_w,$$ 
where may be defined by $\mathrm{lastright}(w)=n|u|_a-(n-1)(|u|_b+1)$.
On the example in Figure~\ref{fig:superimposition}, $n=8$, $|u|_a=4$, $|u|_b=1$, $\mathrm{lastright}(w)=8\times 4-7\times(1+1)=18$, $\mathrm{lastright}(w)+2n-1 = 18+2\times 8 - 1 = 33$ and the vertex labeled by $[0]_{\Phi(w)}$ is also labeled by $[33]_{w}$. 
This remark helps to determine the number of iterations of $\mathrm{shift}$ for the superimposition:
the flip maps the vertex $[\mathrm{lastright}(w)+2n-1]_{w}$ in $C_n[w]$ to the vertex $[-\left(\mathrm{lastright}(w)+2n-1\right)]_{\tilde{w}}$ in $C_n[\tilde{w}]$, then $(\mathrm{lastright}(w)+2n-1)$ shift maps this vertex to $[0]_{\Phi(w)}$ in $C_n[\Phi(w)]$.

This superimposition is then refined to $f=\mathrm{cyltoconf}((C_n[w],s,x)))$ and $\mathrm{sort}(-f)=\mathrm{cyltoconf}((C_n[\Phi(w)],-s,\mathrm{lastright}(w)-x-k)).$
Indeed, in the definition of $f$ by $\mathrm{cyltoconf}$ the segment of $n-1$ north steps starting from vertex labeled $[x]_w$ and ending in vertex $[x+n(n-1)]_w$ in $C_n[w]$ is superimposed, up to reverse, to the image of the similar segment used in the definition of some configuration $g$ in $C_n[\Phi(w)]$.
It appears that $g=\mathrm{sort}(-f)$. 

Indeed, first the permutation used in this case of $\mathrm{sort}$ is $\omega \in S_{n-1}$ defined by $\omega(i)=n-i$ for $i<n$.
This matches the reverse of segment induced by the $\mathrm{flip}$ symmetry.
Then each row we observe that $g_i=-f_{n-i}$ for some $i<n$ since both definitions use the same pair of north steps from the segment and the cut but there relative position change of sign due to the $\mathrm{flip}$ symmetry.
The case of the value on the sink does not corresponds to a superimposition but the map $s\longrightarrow -s$ obviously satisfies the constraint $g_n=-f_n$. 
We obtain the expected relation by observing that in the superimposition described using $\mathrm{shift}^{\mathrm{lastright}(w)+2n-1}\circ \mathrm{flip}$, the vertex labeled by $[x+n(n-1)]_w$ in $C_n[w]$ is labeled by $$[(\mathrm{lastright}(w)+2n-1)-x-n(n-1)]_{\Phi(w)}=[\mathrm{lastright}(w)-k]_{\Phi(w)}$$ in $C_n[\Phi(w)]$. 
On the example in Figure~\ref{fig:superimposition}, we have $n=8$, $x=35$, the (green) segment of $n-1$ north steps defining $\mathrm{cyltoconf}(C_8[aabaaabbabbabbb],s,35)$ starts in $[35]_w$ and ends in $[91]_w=[35+7\times 8]_w$.
The (red) segment of $n-1$ south steps defining $\mathrm{cyltoconf}(C_8[aaabaabbbabbabb],-s,-58)$ starts in $[33-91]_{\Phi(w)}=[-58]_{\Phi(w)}$. 
\subsection{Revisiting some properties of discussed involutions}

The preceeding superimposition gives an alternative setting to describe the involution $\Phi$.
Since the shift and flip maps act on vertices, they can act on steps seens as a pair of vertices.
Using the identification between general cuts and paths of $A_n$, we may describe $\Phi$ as follows:

\begin{statement}
Let $w \in D_n$, we have 
$$ \Phi(w) = \mathrm{shift}^{\mathrm{lastright}(w)+2n-1}\circ \mathrm{flip}(w).$$ 
\end{statement}

\subsubsection*{$\Phi$ and bistatistic $(\mathrm{area},\mathrm{prerank})$}
The superimposition also helps to see at the level of rows of the skew cylinder why the involution $\Phi$ exchanges the $\mathrm{prerank}$ and $\mathrm{area}$ statistics.
Here we denote by $\mathrm{height}_w(v)$ the height of the vertical step $v$ in the cut $w$ which is also defined as the number of cells on the row crossed by $v$ which are completly between this step and the (green dashed) diagonal step crossing this row in Figure~\ref{fig:superimposition}.
These cells are marked by a green square.
We denote by $\mathrm{coheight}_w(v)$ the coheight of the vertical step $v$ in the cut $w$ which appears to be defined as the number of cells on the row crossed by $v$ which are completly between this step and the (red dashed) diagoanl step crossing this row in Figure~\ref{fig:superimposition}.
For example, in Figure~\ref{fig:superimposition}, for each rows from bottom to top the heights  are $(0,1,1,2,3,2,1)$ and the coheights are $(3,2,2,1,0,0,1)$.  
Inspection shows that these two definitions coincide with the definitions given in the main part of this article.
Then, the superimposition exchanges the meaning of the red and green dashed diagonals hence make obvious the following statement 
\begin{statement}
Let $w\in D_n$.
For any vertical step $v$ of the superimposed cuts $w$ and $\Phi(w)$ in the superimposition of $C_n[w]$ and $\mathrm{shift}^{\mathrm{lastright}(w)+2n-1}\circ \mathrm{flip}(C_n[\Phi(w)])$,
$$ \mathrm{height}_{w}(v)=\mathrm{coheight}_{\Phi(w)}(v) \mbox{ and } \mathrm{coheight}_{w}(v)=\mathrm{height}_{\Phi(w)}(v) $$ 
\end{statement}
Summing this statement on all rows implies that the area, which is the sum of heights, and the prerank, which is the sum of coheights are exchanged by $\Phi$.

\subsubsection*{$\Phi$ preserves $\mathrm{dinv}$ parameter}.
We introduce an alternative definition of $\mathrm{dinv}$ statistic called $\mathrm{cdinv}$ for cyclic $\mathrm{dinv}$ which is more obviously preserved by $\Phi$.
\begin{definition}
Let $w\in D_n$.
We consider on each row $i$ of $C_n[w]$, from bottom to top, the contact $c_i$ which is the label in $C_n[w]$ of the vertex where start the north step crossing this row $i$. 
Then
$$\mathrm{cdinv}(w) = |\{ \{i,j\}| 1\leq i<j \leq n-1 \mbox{ and } |c_i-c_j| \leq n-1|\}|$$
\end{definition}

On the example in Figure~\ref{fig:superimposition}, we have
$$(c_i)_{i=1\ldots n-1} = 0,8,9,17,25,19,13$$
and 
$$\mathrm{cdinv}(w) = |\left\{ \{8,9\}, \{8,13\},\{9,13\},\{13,17\},\{13,19\},\{17,19\},\{19,25\}\right\}|=7.$$

This definition is motivated by the following statement:
\begin{statement}
Let $w=w'b\in D_n$, then $$\mathrm{dinv}(w')=\mathrm{cdinv}(w)$$ and $$\mathrm{cdinv}(\Phi(w))=\mathrm{cdinv}(w).$$
\end{statement}

The preceeding example for the definition of $\mathrm{cdinv}$ helps to convince the reader that this one case definition of pairs for $\mathrm{cdinv}(w)$ coincide with the two case definition of $\mathrm{dinv}(w')$.
More precisely, the vertices of label $[c_i]_w$ and $[c_i+n-1]_w$ are on the same row, and for any pair $\{c_i,c_j\}$ counted for $\mathrm{cdinv}$ , with $c_i<c_j$, the two cases in the definition of $\mathrm{dinv}$ corresponds to the discussion if the row of the vertex $0$ belongs to one of the row of $[c_i+k]_w$ for some $k=0,1,\ldots c_j-c_i$.
Hence $\mathrm{cdinv}(w)=\mathrm{dinv}(w')$.

The contacts are defined as the starts of vertical steps, and up to orientation, these steps are preserved in the superimposition of $C_n[w]$ and $\mathrm{shift}^{\mathrm{lastright}(w)+2n-1}\circ \mathrm{flip}(C_n[\Phi(w)])$.
In this setting, the contacts of $\Phi(w)$ are the opposite vertices to the vertical of the contacts of $w$.
More precisely, starting from the contact of label $[c_i]_w$, the opposite vertex to the related north step is of label $[c_i+n]_w$ in $C_n[w]$, equivalently of label $[\mathrm{lastright}(w)+2n-1-(c_i+n)]_{\Phi(w)}$ in $C_n[\Phi(w)]$.
Hence the images of the contacts $([c_i]_w)_{i=1\ldots n-1}$ are the contacts $\{c'_i\}_{i=1\ldots n-1}=\{[\mathrm{lastright}(w)+n-1-c_i]_{\Phi(w)}\}_{i=1\ldots n-1}$.
We notice that $c'_i-c'_j=-(c_i-c_j)$ so the map $\{c_i,c_j\}\longrightarrow \{c'_i,c'_j\}$ defines a bijection between the  pairs counted for $\mathrm{cdinv}(w)$ and those counted for $\mathrm{cdinv}(\Phi(w))$, showing that $\mathrm{cdinv}(w)=\mathrm{cdinv}(\Phi(w))$.

On the example in Figure~\ref{fig:superimposition}, we have the contact of $\Phi(w)$ which are 
$$(c'_i)_{i=1\ldots n-1} = 0,8,16,17,25,12,6$$
and
$$\mathrm{cdinv}(\Phi(w)) = |\left\{ \{0,6\}, \{6,8\},\{6,12\},\{8,12\},\{12,16\},\{12,17\},\{16,17\}\right\}|=7.$$
For example the pair $\{0,6\}$ in the computation of $\mathrm{cdinv}(\Phi(w))$ corresponds to the pair $\{19,25\}$ in the computation of $\mathrm{cdinv}(w)$.

\subsubsection*{$\Psi$ via the operators related to $\mathrm{cyltoconf}$}

The operators on compact sorted configurations that we describe via the map $\mathrm{cyltoconf}$ allows a derivation via this setting of a preceeding explicit description of the involution $\Psi(f)=\mathrm{sort}\circ \mathrm{park}(\kappa-f)$ on sorted parking configurations.
Indeed, we decompose $\Psi$ as follows

\centerline{$\begin{array}{lcl}
 \Psi(f) & =  & T^{\mathrm{lastright}(w)-1-(n-3)} \circ (g\rightarrow g-\Delta^{(n)})^{n-3}\\
\ & \ &  \circ (g\rightarrow g+\epsilon^{(n)})^{n(n-3)}\circ(g\rightarrow \mathrm{sort}(-g))(f)\\
\end{array}$}

where $w$ is defined in the  doubly pointed cut skew cylinder $(C_n[w],s,0)$ related by $\mathrm{cyltoconf}$ to the sorted recurrent configuration $f$. 
Each operator in this decomposition is described in preceeding subsection via $\mathrm{cyltoconf}$, hence applying these descriptions  we also obtain 

\begin{statement} For any $w\in D_n$ and $s\in\mathbb{Z}$, we have 
$$\Psi(\mathrm{cyltoconf}(C_n[w],s,0)) = \mathrm{cyltoconf}(C_n[\Phi(w)],\mathrm{lastright}(w)-1-s,0) $$
\end{statement}

\begin{figure}[ht!]
\begin{tikzpicture}[scale=0.8]
\draw (-5,0) grid (9,7);
\draw[line width=2,draw=blue] (0,0) -- (0,2) -- (1,2) -- (1,5) -- (3,5) -- (3,6) -- (5,6) -- (5,7) -- (8,7);
\draw[line width=1,dashed,draw=green] (0,0) -- (7,7);
\draw[line width=0,fill=green] (0,0) circle (0.1);
\draw[line width=1,dashed,draw=red] (1,5) -- (-4,0);
\draw[line width=1,dashed,draw=red] (4,7) -- (2,5);
\draw[line width=0,fill=red] (1,5) circle (0.1);
\foreach \i/\j/\wlabel in {0/0/0,1/1/1,2/2/2,3/3/3,4/4/4,5/5/5,6/6/6,-1/0/7,0/1/8,1/2/9,2/3/10,3/4/11,4/5/12,5/6/13,-2/0/14,-1/1/15,0/2/16,1/3/17,2/4/18,3/5/19,4/6/20,-3/0/21,-2/1/22,-1/2/23,0/3/24,1/4/25,2/5/26,3/6/27,-4/0/28,-3/1/29,-2/2/30,-1/3/31,0/4/32,1/5/33}{
\draw node[green] at (\i-0.25,\j+0.25) {\tiny{$\wlabel$}};
}
\foreach \i/\j/\phiwlabel in {1/5/0,0/4/1,-1/3/2,-2/2/3,-3/1/4,4/7/5,3/6/6,2/5/7,1/4/8,0/3/9,-1/2/10,-2/1/11,5/7/12,4/6/13,3/5/14,2/4/15,1/3/16,0/2/17,-1/1/18,6/7/19,5/6/20,4/5/21,3/4/22,2/3/23,1/2/24,0/1/25,7/7/26,6/6/27,5/5/28,4/4/29,3/3/30,2/2/31,1/1/32,8/7/33}{
\draw node[red] at (\i+0.25,\j-0.25) {\tiny{$\phiwlabel$}};
}
\foreach \diag/\length in {-2/3,-1/6,0/6,1/6,2/6,3/5,4/4,5/3,6/2,7/1,8/0}{
  \foreach \j in {0,...,\length}{
   \draw[green,line width=2,->,opacity=0.4] (\diag+\j+0.1,\j+0.1) -- (\diag+\j+1-0.1,\j+1-0.1);
 }
}
\draw[fill=black,line width=0] (2,4) circle (0.12); 
\foreach \diag/\colength in {-2/4,-3/0,-4/0,-5/0,-6/1,-7/2,-8/3,-9/4,-10/5,-11/6}{
  \foreach \j in {\colength,...,6}{
   \draw[red,line width=2,->,opacity=0.4] (\diag+\j+0.9,\j+0.9) -- (\diag+\j+0.1,\j+0.1);
 }
}
\foreach \i/\j in {-1/0,-2/0,-1/1,0/2}{
\draw[line width=0,fill=red,opacity=0.4] (\i+0.5,\j+0.5) circle (0.15); 
}
\foreach \i/\j in {1/4,2/4,3/5}{
\draw[line width=0,fill=green,opacity=0.4] (\i+0.5,\j+0.5) circle (0.15); 
}
\end{tikzpicture}
\caption{A global view of the involution $\Psi$\label{fig:rrinvolution}}
\end{figure}
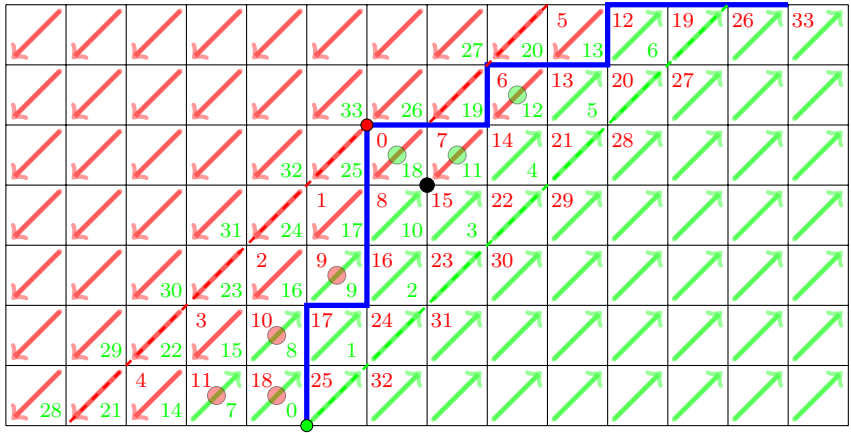

\subsubsection*{$\Psi$ and a refined superimposition.} 
This involution $\Psi$ can also be described globally by a refinement of the superimposition and in particular an approriate description of $rw(f)$ and $lw(f)$ from its doubly pointed cut skew cylinder $(C_n[w],s,0)$.

We associate to any doubly pointed cut skew cylinder $(C_n[w],s,0)$ the \emph{partial spiral traversal} which visits the vertices labeled from $-\infty$ to $s+n$.
In Figure~\ref{fig:rrinvolution}, $n=8$ and for $s=10$, the green arrows suggests the partial traversal from $-\infty$ to $s+n=18$, this last visited vertex labeled by $[18]_w$ being underlined by a black dot.
A cell is \emph{visited} by a partial traversal when at least three of its corners are visited otherwise it is called unvisited.
In Figure~\ref{fig:rrinvolution}, for $s+n=18$, the cells visited are those crossed by the green arrows.
Since the (blue) cut $w$ disconnect the skew cylinder into two connected components of cells adjacent by sides, we also consider the cells in the terminal (left) component of the complete spiral traversal and the complementary initial (right) component.
Hence a cell is either \emph{terminal} or \emph{initial} according to the component it belongs.
In this setting, the number $\mathrm{vister}(C_n[w],s,0)$ of visited and terminal cells corresponds to the cells used in Figure~\ref{fig:calcRank3} to compute the rank hence 
$$\mathrm{vister}(C_n[w],s,0) = \rho(f)+1.$$ 
On Figure~\ref{fig:rrinvolution}, these cells are marked by a red disk.
Similarly the number $\mathrm{unvini}(C_n[w],s,0)$ of unvisited and initial cells are related to the rank and degree of $f$ by 
$$\mathrm{unvini}(C_n[w],s,0)= {n-1\choose 2} + \rho(f)-deg(f).$$
On Figure~\ref{fig:rrinvolution}, these cells are marked by a green disk.

\begin{statement}
Let $f=\mathrm{cyltoconf}(C_n[w],s,0)$ be a sorted recurrent configuation, we have 
$$ (\mathrm{lw(f)},\mathrm{rw}(f)) = (\mathrm{vister}(C_n[w],s,0),\mathrm{unvini}(C_n[w],s,0)).$$
\end{statement}

The refinement of the superimposition of $C_n[w]$ and $\mathrm{shift}^{\mathrm{lastright}(w)+2n-1}\circ \mathrm{flip}(C_n[\Phi(w)])$ consider for the partial traversal of $C_n[w]$ ending in the vertex $[s+n]_w$, the almost complementary partial traversal of $C_n[\Phi(w)]$ also ending in the same vertex.
This vertex was underlined by the black dot in Figure~\ref{fig:rrinvolution} and the complementary partial traversal is described by the red arrows.
We notice that each cell is visited by exactly one of these two partial traversals.  
More precisely, we notice that this superimposition exchange visited and unvisited cells and also terminal and initial cells.
Hence the related involution exchanges $\mathrm{vister}$ and $\mathrm{univini}$ statistics.

The last vertex $[s+n]_w$ of the partial traversal related to  $(C_n[w],s,0)$ is labeled by $[\mathrm{lastright}(w)+2n-1-(s+n)]_{\Phi(w)}=[s'+n]_{\Phi(w)}$ in $C_n[\Phi(w)]$ where $s'$ is the undetermined value for the configuration defined in $C_n[\Phi(w)]$. 
Hence $s'=\mathrm{lastright}(w)-1-s$ and we notice that this involution exchanging $\mathrm{lw}(f)$ and $\mathrm{rw}(f)$ is indeed $\Psi$.  

On the example of Figure~\ref{fig:rrinvolution}, for $s=10$, the last visited vertex by the partial traversal in $C_n[w]$ is $s+n=[18]_w$. 
The label of this vertex in $C_n[\Phi(w)]$ is $[15]_{\Phi(w)} = [s'+8]_{\Phi(w)}$ so the superimposed configuration is defined on its sink by $s'=7$ which corresponds to $\mathrm{lastright}(w)-1-s=18-1-10=7$.


\begin{thebibliography}{10}

\bibitem{bakTang2}
P.~Bak, C.~Tang, and K.~Wiesenfeld.
\newblock Self-organised criticality.
\newblock {\em Physical Review A.}, 38:364---374, 1988.

\bibitem{bakerNorine}
M.~Baker and S.~Norine.
\newblock Riemann-{R}och and {A}bel-{J}acobi theory on a finite graph.
\newblock {\em Advances in Mathematics}, 215:766---788, 2007.

\bibitem{bakerShokrieh}
M.~Baker and F.~Shokrieh.
\newblock Chip-firing games, potential theory on graphs, and spanning trees.
\newblock {\em J. Comb. Theory (A)}, 120:164---182, 2013.

\bibitem{bensonChakTetali}
B.~Benson, D.~Chakrabarty, and P.~Tetali.
\newblock G-parking functions, acyclic orientations and spanning trees.
\newblock {\em Discrete Mathematics}, 310:1340---1353, 2010.

\bibitem{biggs1}
N.~Biggs.
\newblock Chip-firing and the critical group of a graph.
\newblock {\em J. of Algebraic Comb.}, 9:25--45, 1999.

\bibitem{BjoLovShor}
A.~Bj{\"o}rner, L.~Lov\'asz, and P.Shor.
\newblock Chip-firing game on graphs.
\newblock {\em European J. Combin}, 12:283--291, 1991.

\bibitem{coriLeBorgne1}
R.~Cori and Y.~Le {B}orgne.
\newblock The sand-pile model and {T}utte polynomials.
\newblock {\em Advances in App. Math.}, 30:44---52, 2003.

\bibitem{coriRossin}
R.~Cori and D.~Rossin.
\newblock On the sandpile group of dual graphs.
\newblock {\em Europ. J. Comb}, 21:447---459, 2000.

\bibitem{dhar2}
D.~Dhar.
\newblock Self-organized critical state of the sandpile automaton models.
\newblock {\em Phys. Rev. Lett.}, 64:1613---1616, 1990.

\bibitem{dharIntro}
D.~Dhar.
\newblock Theoretical studies of self-organized criticality.
\newblock {\em Physica A}, 369 (1), 2006.

\bibitem{dharMajumdar}
D.~Dhar and S.~Majumdar.
\newblock Equivalence between the {A}belian sandpile model and the $q
  \rightarrow 0$ limit of the {P}otts model.
\newblock {\em Physica A}, 185:129---135, 1992.

\bibitem{dhar1}
D.~Dhar and R.~Ramaswamy.
\newblock Exactly solved model of self-organized critical phenomena.
\newblock {\em Phys. Rev. Lett.}, 63:1659---1662, 1989.

\bibitem{dharRuelleVerma}
D.~Dhar, P.~Ruelle, S.~Sen, and D.~Verma.
\newblock Algebraic aspects of abelian sandpile model.
\newblock {\em J. Phys. A}, A28:805---831, 1995.

\bibitem{dvoMotzk}
A.~Dvoretsky and T.~Motzkin.
\newblock A problem of arrangements.
\newblock {\em Duke Math. J.}, 14:305---313, 1947.

\bibitem{farkasKra}
H.~M. Farkas and I.~Kra.
\newblock {\em Riemann Surfaces}.
\newblock Graduate {T}exts in {M}athematics, {Springer}, 1992.

\bibitem{garsiaHaiman}
A.~Garsia and M.~Haiman.
\newblock A remarkable $q,t$-catalan sequence and $q-$lagrange inversion.
\newblock {\em J. Algebraic Combin.}, 5:191---244, 1996.

\bibitem{gesselSagan}
I.~Gessel and B.~Sagan.
\newblock The {T}utte polynomial of a graph, depth-first search, and simplicial
  complex partitions.
\newblock {\em Electronic J. of Combin.}, 5:191---244, 1996.

\bibitem{kramersWannier}
Kramers H. and Wannier C.
\newblock Statistics of the two-dimensional ferromagnet. part {I}.
\newblock {\em Physical Review}, 60:252--262, 1941.

\bibitem{haglund}
J.~Haglund.
\newblock {\em The $q,t$-Catalan numbers and the space of diagonal harmonics}.
\newblock AMS University Lectures Series, 2008.

\bibitem{knuthVol4}
D.~E. Knuth.
\newblock {\em The art of Computer Programming, Volume 4 Combinatorial
  Algorithms, Fasc 4A}.
\newblock Addison Wesley, 2004.

\bibitem{lorenzini}
D.~Lorenzini.
\newblock Two-variable zeta-functions on graphs and riemann-roch theorems.
\newblock {\em Int. Math. Res. Notices.}, 2012,22:5100--5131, 2012.

\bibitem{manjunath}
M.~Manjunath.
\newblock The rank of a divisor on a finite graph: geometry and computation.
\newblock preprint, arXiv:1111.7251, 2011.

\bibitem{merinoLopez}
C.~Merino-{L}opez.
\newblock Chip firing and {T}utte polynomials.
\newblock {\em Ann. Combin.}, 3:253---259, 1997.

\bibitem{perkPerlWilm}
D.~Perkinson, J.~Perlman, and J.~Wilmes.
\newblock Primer for the algebraic geometry of sandpiles.
\newblock preprint, arXiv:1112.6163, 2011.

\bibitem{tutte2}
W.~T. Tutte.
\newblock On dichromatic polynomials.
\newblock {\em J. Combin. Theory}, 2:301---320, 1967.

\end{thebibliography}
\end{document}